\documentclass[11pt]{article} 
\usepackage{graphicx, amsmath,amsthm,amssymb,url}

\usepackage{mathtools}
\usepackage[multidot]{grffile}
\usepackage[normalem]{ulem} 

\usepackage{fullpage,etoolbox}
\usepackage{color}
\usepackage[square]{natbib}
\usepackage{hyperref}       
\hypersetup{
    colorlinks=true,
    linkcolor=blue,
    citecolor = blue,
    urlcolor = blue
}


\newtheorem{theorem}{Theorem}
\newtheorem{lemma}[theorem]{Lemma}

\newtheorem{remark}{Remark}
\newtheorem{definition}{Definition}
\newtheorem{assumption}{Assumption}

\usepackage[algo2e,ruled]{algorithm2e}
\usepackage{makecell,booktabs,ctable}
\begin{document}



\newcommand{\guannan}[1]{\textcolor{cyan}{(Guannan: #1)}}
\newcommand{\adam}[1]{\textcolor{red}{(Adam: #1)}}
\newcommand{\lina}[1]{\textcolor{red}{(Lina: #1)}}
\newcommand{\guannanhighlight}[1]{\textcolor{blue}{#1}}
\newcommand{\revision}[1]{{\color{blue}#1}}

\newcommand{\diag}{\textup{diag}}
\newcommand{\td}{\textup{TD}}
\newcommand{\E}{\mathbb{E}}
\newcommand{\R}{\mathbb{R}}

\newcommand{\algoname}{\textsc{SAC}}
\newcommand{\algonamefull}{Scalable Actor Critic}

\newcommand{\khop}{{ \kappa}}
\newcommand{\rhok}{{ \rho^{\khop+1}}}
\newcommand{\fk}{{ f(\khop)}}
\newcommand{\nik}{{ N_i^{\khop}}}
\newcommand{\njk}{{ N_j^{\khop}}}
\newcommand{\nminusik}{{ N_{-i}^{\khop}}}
\newcommand{\nminusjk}{{ N_{-j}^{\khop}}}

\newcommand{\final}[1]{{\color{red}#1}}


\title{Scalable Reinforcement Learning for Multi-Agent \\ Networked Systems}

\author{Guannan Qu\footnote{Department of Electrical and Computer Engineering, Carnegie Mellon University. Email: \url{gqu@andrew.cmu.edu}}\qquad Adam Wierman\footnote{Department of Computing and Mathematical Sciences, California Institute of Technology. Email: \url{adamw@caltech.edu}} \qquad Na Li\footnote{School of Engineering and Applied Sciences, Harvard University. Email: \url{nali@seas.harvard.edu}}}
\date{}
\maketitle

\begin{abstract}
    We study reinforcement learning (RL) in a setting with a network of agents whose states and actions interact in a local manner where the objective is to find localized policies such that the (discounted) global reward is maximized. A fundamental challenge in this setting is that the state-action space size scales exponentially in the number of agents, rendering the problem intractable for large networks. In this paper, we propose a \algonamefull\ (\algoname) framework that exploits the network structure and finds a localized policy that is an $O(\rhok)$-approximation of a stationary point of the objective for some $\rho\in(0,1)$, with complexity that scales with the local state-action space size of the largest $\khop$-hop neighborhood of the network. We illustrate our model and approach using examples from wireless communication, epidemics and traffic.

\end{abstract}


%


\section{Introduction}
The modeling and optimization of networked systems such as wireless communication networks and traffic networks is a long-standing challenge.  Typically analytic models must make numerous assumptions to obtain tractable models as a result of the complexity of the systems, which include many unknown, or unmodeled dynamics. Given the success of Reinforcement Learning (RL) in a wide array of domains such as game play \citep{silver2016mastering,mnih2015human}, robotics \citep{duan2016benchmarking}, and autonomous driving \citep{li2019reinforcement}, 
it has emerged as a promising tool for tackling the complexity of networked systems. However, when seeking to use RL in the context of the control and optimization of large-scale networked systems, scalability quickly becomes an issue.  The goal of this paper is to develop \emph{scalable} multi-agent RL for networked systems. 

Motivated by real-world networked systems like wireless communication, epidemics, and traffic, we consider an RL model of $n$ agents with \emph{local interaction structure}. Specifically, each agent $i$ has local state $s_i$, local action $a_i$ and the agents are associated with an underlying dependence graph $\mathcal{G}$ and interact locally, i.e, the distribution of $s_i(t+1)$ only depends on the current states of the local neighborhood of $i$ as well as the local $a_i(t)$. Further, each agent is associated with stage reward $r_i$ that is a function of $s_i, a_i$, and the global stage reward is the average of $r_i$. In this setting, the design goal is to find a decision policy that maximizes the (discounted) global reward. This setting captures a wide range of applications, e.g. epidemics \citep{epi_mei2017dynamics}, social networks \citep{application_chakrabarti2008epidemic,application_llas2003nonequilibrium}, wireless communication networks \citep{zocca2019temporal,application_communication}, queueing networks \citep{complexity_papadimitriou1999complexity}, smart transportation \citep{zhang2016control}, and smart building systems \citep{application_wu2016optimal,zhang2017decentralized}. 

A fundamental difficulty when applying RL to such networked systems is that, even if individual state and action spaces are small, the entire state profile $(s_1,\ldots,s_n)$ and the action profile $(a_1,\ldots,a_n)$ can take values from a set of size exponentially large in $n$. This ``curse of dimensionality'' renders the problem unscalable. For example, most RL algorithms such as temporal difference (TD) learning or $Q$-learning require storage of a $Q$-function \citep{bertsekas1996neuro} whose size is the same as the state-action space, which is exponentially large in $n$. 
Such scalability issues have indeed been 
observed in previous research on variants of the problem we study, e.g. in multi-agent RL \citep{marl_littman1994markov,marl_bu2008comprehensive} and factored Markov Decision Process (MDP) \citep{kearns1999efficient,factor_guestrin2003efficient}. 
A variety of approaches have been proposed to manage this issue, e.g. the idea of ``independent learners'' in \citet{tan1993multi,marl_claus1998dynamics}; or 
function approximation schemes \citep{tsitsiklis1997analysis}. 
  However, such approaches lack rigorous optimality guarantees. 
In fact, it has been suggested that such MDPs with exponentially large state spaces may be fundamentally intractable, e.g., see \citet{complexity_blondel2000survey}.

	
In addition to the scalability issue, another challenge is that, even if an optimal policy that maps a global state $(s_1,\ldots,s_n)$ profile to a global action $(a_1,\ldots,a_n)$ can be found, it is usually impractical to implement such a policy for real-world networked systems because of the limited information and communication among agents. For example, in large scale networks, each agent $i$ may only be able to implement \emph{localized policies}, where its action $a_i$ only depends on its own state $s_i$. Designing such localized policies with global network performance guarantees can also be challenging, e.g., see \citet{rotkowitz2005characterization}.


The challenges described above highlight the difficulty of applying RL to control large scale networked systems; however, the network itself provides some structure, particularly the local interaction structure, that can potentially be exploited.  The question that motivates this paper is: \emph{Can the network structure be utilized to develop scalable RL algorithms that provably find a (near-)optimal localized policy?} 

\textbf{Contributions.}  In this work we propose a framework that exploits properties of the network structure to develop RL to learn \emph{localized} policies for large-scale networked systems in a \emph{scalable} manner. Specifically, our main result (Theorem \ref{thm:main}) shows that our algorithm, \algonamefull\ (\algoname), finds a localized policy that is a $O(\rhok)$-approximation of a stationary point of the objective function, with complexity that scales with the local state-action space size of the largest $\khop$-hop neighborhood. To the best of our knowledge, our results are the first to provide such provable guarantees for scalable RL of localized policies in multi-agent networked settings.

 The key technique underlying our results is we prove that, under the local interaction structure, the $Q$-function satisfies an \emph{exponential decay property} (Definition~1), where the $Q$-function's dependence on far away nodes shrink exponentially in their graph distance with rate $\rho\leq \gamma$, where $\gamma$ is the discounting factor. This leads to a tractable approximation of the $Q$-function.  In particular, despite the $Q$-function itself being intractable to compute due to the large state-action space size, we introduce a \emph{truncated $Q$-function} which only depends on a small spatial horizon, (see Lemma \ref{lem:truncated_pg}) that can be computed efficiently and can be used in an actor critic framework which yields an $O(\rho^\kappa)$-approximation. This technique is novel and is a contribution in its own right.  It can be used broadly to develop RL for networked settings beyond the specific actor critic algorithm we propose in this paper. 

To illustrate our model and our results, we provide stylized examples of applications in three areas: multi-access wireless communication, epidemics, and traffic signal control in Section~\ref{subsec:examples}. We conduct numerical experiments to demonstrate the performance of the approach using both synthetic examples and an application to wireless communication in Section~\ref{sec:numerical}.



\textbf{Related Literature.} Our problem falls into the category of the ``succinctly described'' MDPs in \citet[Section 5.2]{complexity_blondel2000survey}, where the state/action space is a product space formed by the individual state/action space of multiple agents. 
As the state/action space is exponentially large, such problems are not scalable in general, even when the problem has structure \citep{complexity_blondel2000survey,complexity_whittle1988restless,complexity_papadimitriou1999complexity}. 
Despite this, there is a large literature on RL/MDPs in multi-agent settings, which we discuss below. 

\textit{Multi-agent RL} dates back to the early work of \citet{marl_littman1994markov,marl_claus1998dynamics,marl_littman2001value,marl_nashq} (see \citet{marl_bu2008comprehensive} for a review) and has been actively studied, e.g. \cite{zhang2018fully,kar2013cal,macua2015distributed,mathkar2017distributed,wai2018multi}, see a more recent review in \citet{zhang2019multi}. 
Multi-agent RL encompasses a broad range of settings including competitive agents and Markov games. The case most relevant to ours is the cooperative multi-agent RL where typically, the agents can take their own actions but they share a common global state and maximize a global reward \citep{marl_bu2008comprehensive}. 
This is in contrast to the model we study, in which each agent has its own state and acts upon its own state. 
Despite the existence of a global state, multi-agent RL still faces scalability issues since the joint-action space is exponentially large. 
Methods have been proposed to deal with this, including independent learners 
\citep{tan1993multi,marl_claus1998dynamics,matignon2012independent}, where each agent employs a single-agent RL method. While successful in some cases, the independent learner approach can suffer from instability \citep{matignon2012independent}. 
Alternatively, one can use function approximation schemes to approximate the large $Q$-table, e.g. linear function approximation \citep{zhang2018fully} or neural networks \citep{lowe2017multi}. Such methods can reduce computation complexity significantly, but it is unclear whether the performance loss caused by the function approximation is small. 
In contrast, our technique not only reduces computation but also guarantees small performance loss.

\textit{Factored MDPs} are problems where every agent has its own state and the state transition factorizes in a way similar to our model \citep{kearns1999efficient,factor_guestrin2003efficient,osband2014near}. However, they differ from the model we consider in that each agent does not have its own action.  Instead, there is a global action affecting every agent.  Despite the difference, Factored MDPs still suffer from scalability issues. Similar approaches as in the case of multi-agent RL are used, e.g., \citet{factor_guestrin2003efficient} proposes a class of ``factored'' linear function approximators; however, it is unclear whether the loss caused by the approximation is small. 


\textit{Other Related Work.} Our work is also related to weakly coupled MDPs, where every agent has its own state and action but their transition is decoupled \citep{weakly_mdp_meuleau1998solving}.
Additionally, our model shares some similarity with Glauber dynamics in physics \citep{physics_andrey2015dynamic,mezard2009information}, though our focus is very different from these works. As we consider the class of localized policies, another related line of work is Partially Observable MDP
(POMDP) \citep{nair2005networked,oliehoek2016concise,bertsekas2005dp_vol1}, though the formulations and results we have are very different from those works. 

Finally, this work is related to our earlier work \citet{qu2019exploiting}, which assumes the full knowledge of the MDP model (not RL) and imposes strong assumptions on the graph. In contrast, our work here does not need knowledge of the MDP and significantly relaxes the assumptions. 

\section{Preliminaries}
In this section, we introduce our model, provide a few illustrative examples, and provide important background in RL that underlies our analysis. Throughout this paper, $\Vert \cdot\Vert$ denotes Euclidean norm and $\Vert \cdot\Vert_\infty$ denotes infinity norm. Notation $t$ and $T$ are reserved as iteration counters for the inner loop of the algorithm to be introduced later, $m$ and $M$ for the outer loop, and $\khop$ is used for counting the hops of neighbors. Notation $O(\cdot)$ hides constants and $\tilde{O}(\cdot)$ hides $\log$ factors with respect to iteration variables $T,M$ and variable $\khop$. The total variation distance for two distributions $\pi,\pi'$ over a finite set $\mathcal{S}$ is defined as $\textup{TV}(\pi,\pi') = \sup_{E\subset \mathcal{S}} |\pi( E) - \pi'( E) |$. 

\subsection{Model}\label{subsec:model}
{We consider a network of $n$ agents that are associated with an underlying undirected graph $\mathcal{G} = (\mathcal{N},\mathcal{E})$}, where  $\mathcal{N}=\{1,\ldots,n\}$ is the set of agents and $\mathcal{E}\subset \mathcal{N}\times\mathcal{N}$ is the set of edges.  Each agent $i$ is associated with state $s_i\in\mathcal{S}_i$, $a_i\in\mathcal{A}_i$ where $\mathcal{S}_i$ and $\mathcal{A}_i$ are finite sets. The global state is denoted as $s = (s_1,\ldots,s_n)\in \mathcal{S}:=\mathcal{S}_1\times\cdots\times \mathcal{S}_n$ and similarly the global action $a=(a_1,\ldots,a_n)\in\mathcal{A}:=\mathcal{A}_1\times\cdots\times\mathcal{A}_n$. At time $t$, given current state $s(t)$ and action $a(t)$, the next individual state $s_i(t+1)$ is independently generated and is only dependent on neighbors:
\begin{align}
  P(s(t+1)|s(t),a(t)) = \prod_{i=1}^n P(s_i(t+1)|s_{N_i}(t),a_i(t)),  \label{eq:transition_factor}
\end{align}
where notation $N_i$ means the neighborhood of $i$ (including $i$ itself) and notation $s_{N_i}$  means the states of the agents in $N_i$. In addition, for integer $\khop\geq 0$, we use $\nik$ to denote the $\khop$-hop neighborhood of $i$, i.e. the nodes whose graph distance to $i$ has length less than or equal to $\khop$. We also let $\fk  = \sup_{i} |\nik|$. 


Each agent is associated with a class of localized policies $\zeta_i^{\theta_i}$ parameterized by $\theta_i$. 
The localized policy $\zeta_i^{\theta_i}(a_i|s_i)$ is a distribution on the local action $a_i$ conditioned on the local state $s_i$, and each agent, conditioned on observing $s_i(t)$, takes an action $a_i(t)$ independently drawn from $\zeta_i^{\theta_i}(\cdot|s_i(t))$. 
We use $\theta = (\theta_1,\ldots,\theta_n)$ to denote the tuple of the localized policies $\zeta_i^{\theta_i}$, and also use $\zeta^\theta(a|s) = \prod_{i=1}^n\zeta_i^{\theta_i}(a_i|s_i)$ to denote the joint policy, which is a product distribution of the localized policies as each agent acts independently. 

Further, each agent is associated with a stage reward function $r_i(s_i,a_i)$ that depends on the local state and action, and the global stage reward is $r(s,a) = \frac{1}{n}\sum_{i=1}^n r_i(s_i,a_i)$. 
The objective is to find localized policy tuple $\theta$ such that the discounted global stage reward is maximized, starting from some initial state distribution $\pi_0$,
\begin{align}
     \max_\theta J(\theta): = \E_{s \sim \pi_0} \E_{a(t) \sim \zeta^{\theta}(\cdot|s(t))}\bigg[ \sum_{t=0}^\infty \gamma^t r(s(t),a(t) )  \bigg|s(0) = s\bigg]. \label{eq:discounted_cost}
\end{align}

\begin{remark}\label{rem:general_dependence} In the state transition \eqref{eq:transition_factor} of our model, the distribution of each node's next state is allowed to depend on its neighbors' states $s_{N_i}(t)$, but only on its own action $a_i(t)$ as opposed to its neighbors' actions $a_{N_i}(t)$. This restriction is imposed only for simplicity of exposition. With a simple change of notation, our model, algorithm and analysis can be extended to the more general dependence on $a_{N_i}(t)$. Similarly, each agent's reward function $r_i(s_i,a_i)$ can be generalized to depend on its neighbors' state-action pairs, i.e. $r_i(s_{N_i},a_{N_i})$, and each agent's localized policy can also be generalized to depend on its neighbors' states, i.e. $\zeta_i^{\theta_i}(a_i|s_{N_i})$. \end{remark} 

\begin{remark}
In the paper, the interaction graph $\mathcal{G}$ is undirected, but our model and results can be easily generalized to the directed graph setting without essential changes in the algorithm and the analysis. In detail, to generalize to the directed graph case, the only change needed is to redefine the ``neighbors''. Specifically, $N_i$ needs to be redefined as the ``in-neighborhood'' of $i$ (including $i$ itself), i.e. $i$ itself and the set of nodes that have a directed link pointing towards $i$. In addition, $\nik$ needs to be redefined as the $\khop$-hop ``in-neighborhood'' of $i$, i.e. the nodes whose shortest directed link to $i$ has a length less than or equal to $\khop$ (including $i$ itself). 
\end{remark}

\subsection{Examples}\label{subsec:examples}

In this section, we provide three networked system examples in wireless communication, epidemics, and traffic that feature the local dependence structure we study in this paper. For ease of exposition, we present simple versions of these examples, keeping the essence of the model and highlighting the dependence structure while ignoring some application-specific details.

\textbf{Wireless Communication.} We consider a wireless network with multiple access points \citep{zocca2019temporal}, where there is a set of users  $\mathcal{N} = \{1, 2, \cdots, n\},$ and a set of network access points $Y = \{y_1, y_2, \cdots, y_m\}$. Each user $i$ only has access to a subset $Y_i \subseteq Y$ of the access points. We define the interaction graph as the conflict graph, in which two users $i$ and $j$ are neighbors if and only if they share an access point, i.e. the neighbors of user $i$ is $N_i = \{j\in \mathcal{N}: Y_i\cap Y_j\neq \emptyset \}$. Each user $i$ maintains a queue of packets defined as follows. At time step $t$, with probability $p_i$, user $i$ receives a new packet with an initial deadline $d_i$. Then, user $i$ can choose to send the earliest packet in its queue to one access point in its available set $Y_i$, or not send anything at all. If an action of sending to $y_k\in Y_i$ is taken, and if no other users send to the same access point at this time, then the earliest packet in user $i$'s queue is transmitted with success probability $q_k$ which depends on the access point $y_k$; however, if another user also chooses to send to $y_k$, then there is a conflict and no transmission occurs. If the packet is successfully transmitted, it will be removed from user $i$'s queue and user $i$ will get a reward of $1$. After this, the system moves to the next time step, with all deadlines of the remaining packets decreasing by $1$ and packets with deadline $0$ being discarded.
In this example, the local state $s_i$ of user $i$ is a characterization of its queue of packets, and is represented by a $d_i$ binary tuple $s_i = (e_1, e_2, \cdots, e_{d_i})\in\mathcal{S}_i = \{0,1\}^{d_i}$, where for each $\ell \in \{1,\ldots, d_i\}$, $e_\ell \in \{0, 1\}$ indicates whether user $i$ has a packet with remaining deadline $\ell$. 
The action space is $\mathcal{A}_i = \{\textrm{null}\} \cup Y_i  $, where $\textrm{null}$ represents the action of not sending. 
The detailed transition is provided in Table~\ref{tab:wireless_transition}, where $s_i(t+1) $ only depends on $s_{N_i}(t), a_{N_i}(t)$, which fits into the local interaction structure we consider. The local reward is given by $r_i(s_{N_i}(t),a_{N_i}(t)) = 1$ in the case of the last row of Table~\ref{tab:wireless_transition}, and $r_i(s_{N_i}(t),a_{N_i}(t)) = 0$ in all other cases. The local state space $\mathcal{S}_i$, the local action space $\mathcal{A}_i$, the local transition probabilities in Table~\ref{tab:wireless_transition} and the local reward function $r_i(\cdot)$ form a networked MDP model described in Section~\ref{subsec:model}. 
The above model serves as a basis for more complex multi-access wireless communication models studied in the literature, including those with multiple channels \citep{block2016spatial}, (imperfect) carrier sensing \citep{kim2011achievable}. Existing analytical approaches typically require knowledge of modeling details and parameters such as the packet arrival rate \citep{tassiulas1990stability,yun2012optimal}, while our RL-based approach does not require such knowledge and learns to improve performance in a model-free manner. 

\begin{table}[]
    \centering
    \begin{tabular}{ccc|c}
      $s_i(t)$   & $a_i(t)$ & $s_{N_i/\{i\}}(t), a_{N_i/\{i\}}(t)$  & $s_i(t+1)$ \\ \specialrule{.2em}{.1em}{.1em}
      any & $\mathrm{null}$ & any & Left shift $s_i(t)$ and append $ \mathrm{Bernoulli}(p_i) $.\\\hline
     all zero & not null  &any & Left shift $s_i(t)$ and append $ \mathrm{Bernoulli}(p_i) $.
      \\\hline
      not all zero & $y_k \in Y_i$ & \makecell{$\exists j\in N_i/\{i\}$ s.t.\\
       $s_j(t)$ not all zero, $a_j(t) = y_k$} & Left shift $s_i(t)$ and append $ \mathrm{Bernoulli}(p_i) $. \\\hline
       \multicolumn{3}{c|}{all other cases (denote $a_i(t) = y_k$) } & \makecell{Flip left most ``$1$'' in $s_i(t)$ to ``$0$'' w.p. $q_k$,\\then left shift $s_i(t)$ and append Bernoulli($p_i$).}
    \end{tabular}
    \caption{State transition for the wireless communication example. ``Left shift'' means for a binary tuple, discarding the left most bit. Bernoulli$(p_i)$ means a random variable sampled i.i.d. from the Bernoulli distribution that has probability $p_i$ to be $1$, and probability $1-p_i$ to be $0$. 
    }
    \label{tab:wireless_transition}
\end{table}

\textbf{Epidemic Network. } 
We consider an SIS (Susceptible-Infected-Susceptible) epidemic network model \citep{epi_mei2017dynamics,ahn2014random,ruhi2016analysis}, where there is a undirected graph of nodes $\mathcal{G} = (\mathcal{N},\mathcal{E})$, and each node has a binary state space $\mathcal{S}_i = \{\textrm{susceptible}, \textrm{infected}\}$, as well as a finite action space $\mathcal{A}_i$ with action $a_i\in\mathcal{A}_i$ representing epidemic control measures like different levels of vaccination \citep{preciado2013optimal}. The evolution of the states follows a local interaction structure: the probability of a node turning from susceptible to infected depends on the whether its neighboring nodes are infected or not as well as its control action in place \citep{ruhi2016improved,preciado2013optimal}; the probability of a node turning from infected to susceptible depends on the recovering rate. More precisely, $s_i(t+1)$ only depends on $s_{N_i}(t)$ and $a_i(t)$, and the state transition is provided by,
\begin{align}
    P(s_i(t+1) = \mathrm{susceptible} | s_{N_i}(t), a_i(t) ) = \left\{\begin{array}{ll} [1- \beta_i(a_i(t))]^{|\{j\in N_i/\{i\}: s_j(t) = 1 \} | }, & \text{ if } s_i(t) = \mathrm{susceptible}, \\
\delta_i, & \text{ if } s_i(t) = \mathrm{infected}, \end{array}\right. \label{eq:epidemic_transition}
\end{align}
where $\delta_i\in(0,1)$ is a given recovering rate parameter, and $\beta_i:\mathcal{A}_i\rightarrow (0,1)$ is a given transmission rate function and it depends on the control action $a_i(t)$, and $|\{j\in N_i/\{i\}: s_j(t) = 1 \} |$ is the number of neighboring nodes excluding $i$ itself that are infected. The local reward of each node is given by,
\begin{align}
    r_i(s_i,a_i) =  \mathbf{1}(s_i = \text{susceptible}) - c_i(a_i),\label{eq:epidemic_reward}
\end{align}
which consists of two parts: a positive reward of $1$ if the node is free from infection, subtracting a given cost function $c_i(a_i)$ on the epidemic control measure $a_i$.
In this setup, the expected global reward is a weighted balance between the overall infection level and the epidemic control cost. The above defined local state space $\mathcal{S}_i$, local action space $\mathcal{A}_i$, local transition probabilities in \eqref{eq:epidemic_transition} and local rewards in \eqref{eq:epidemic_reward} form a networked MDP model in Section~\ref{subsec:model}. 
We comment that the above SIS model is a basis for more complex models, e.g. those with ``exposed'' and ``recovered'' states \citep{kuznetsov1994bifurcation,britton2010stochastic} or more complex control interventions \citep{morris2020optimal}. These more complex models can also be captured using the framework above.  We reiterate that the approach in our paper does not require knowledge of model specifications and learns in a model-free manner. 


\textbf{Traffic Network. } Lastly, we consider a traffic signal control problem adapted from \citet{varaiya2013max}. In this setting, each node $i $ represents a road link and the interaction graph represents the physical connection of the road links. Given road link $i$, the local state $s_i = (x_{i,j})_{i\rightarrow j}$ is a tuple of variables with $j$ ranging from neighboring links $i$ can turn to, and $x_{i,j}$ is the number of vehicles on link $i$ that intend to turn to link $j$, and can only take values in $[S] = \{0,1,\ldots,S\}$. Correspondingly, the local state space is $\mathcal{S}_i = [S]^{N_i/\{i\}}$. 
Similarly, the local action $a_i = (y_{i,j})_{i\rightarrow j} $ is the binary traffic signal tuple with $y_{i,j}$ controlling the on-off of turn movement $(i\rightarrow j)$, and the local action space is $\mathcal{A}_i = \{0,1\}^{\{N_i\}/\{i\}}$. At each time, a random amount of vehicles on the queue $x_{i,j}$ will flow into link $j$ when the traffic signal $y_{i,j}$ is on. Meanwhile, link $i$ will receive vehicles from other incoming links, a random fraction of which are then assigned to each of the queues in  $(x_{i,j})_{i\rightarrow j} $. 
Mathematically,
\begin{align}
    x_{i,j}(t+1) = \Big[ x_{i,j}(t) - \min(C_{i,j}(t) y_{i,j}(t),x_{i,j}(t)) + \sum_{k\rightarrow i} \min( C_{k,i}(t) y_{k,i}(t), x_{k,i}(t) ) R_{i,j}(t)\Big]_{0}^{S}, \label{eq:example_traffic}
\end{align}
where $[x]_0^S$ means $\max(\min(x,S),0)$, $C_{i,j}(t)$ (and similarly $C_{k,i}(t)$) is an i.i.d. random variable indicating the random amount of vehicles leaving $i$ for $j$, and $R_{i,j}(t)$ is an i.i.d. random variable that controls the split of the inflow to link $i$ to the queue $x_{i,j}(t)$. See
\citet{varaiya2013max} for the complete details. Given a fixed distribution on the random variables $C_{i,j}(t), R_{i,j}(t)$, \eqref{eq:example_traffic} provides a complete characterization of the distribution of $s_i(t+1)$ conditioned on $s_{N_i}(t)$ and $a_{N_i}(t)$, in which the local state at each link $s_i(t+1)$ only depends on its neighbors' current states and current actions, which fits into our local interaction structure (equation \eqref{eq:transition_factor} and Remark~\ref{rem:general_dependence}). The local reward is a characterization of the congestion level at link $i$, and one version of the reward is the negative queue length
\begin{align}
r_i(s_i,a_i) = -\sum_{j\in N_i/\{i\}} x_{i,j}. \label{eq:example_traffic_reward}
\end{align}
The local state space $\mathcal{S}_i$, the local action space $\mathcal{A}_i$, the local transition probabilities \eqref{eq:example_traffic} and the local rewards \eqref{eq:example_traffic_reward} form a networked MDP discussed in Section~\ref{subsec:model}. 
We comment that policies for traffic signal control, like the max pressure policy in \citet{varaiya2013max}, typically require knowledge of the statistics of the random variables $C_{ij}(t)$ and $R_{ij}(t)$, while our approach learns from data in a model-free manner. 

\subsection{Background in RL}

To provide background for the analysis in this paper, we review a few key concepts in RL. First, fixing a localized policy tuple $\theta = (\theta_1,\ldots,\theta_n)$, an important notion is the $Q$-function, which is defined for policy $\theta$ as a ``table'' of values for each state-action pair $(s,a)\in\mathcal{S}\times\mathcal{A}$ and it is the expected infinite horizon discounted reward under policy $\theta$ conditioned on the initial state and action being $(s,a)$:
{\small\begin{align}
Q^\theta(s,a) &= \E_{a(t) \sim \zeta^{\theta}(\cdot|s(t))}\bigg[ \sum_{t=0}^\infty \gamma^t r(s(t),a(t) )  \bigg|s(0) = s, a(0) = a\bigg] \nonumber \\
&=\frac{1}{n} \sum_{i=1}^n  \E_{a(t) \sim \zeta^{\theta}(\cdot|s(t))} \bigg[ \sum_{t=0}^\infty \gamma^t r_i(s_i(t),a_i(t) ) \bigg|s(0) = s,a(0) = a \bigg] 
:=\frac{1}{n}\sum_{i=1}^n Q_i^\theta(s,a).  \label{eq:full_q}
\end{align}}In the last step, we have defined $Q_i^\theta(s,a)$ which is the $Q$ function for the individual reward $r_i$. Both $Q^\theta$ and $Q_i^\theta$ are exponentially large tables and, therefore, are intractable to compute and store. 

Additionally, another important concept we use is the policy gradient theorem, which provides a characterization of the gradient of the objective $J(\theta)$ and is the basis of many algorithmic results in RL. The policy gradient theorem shows that the gradient of $J(\theta)$ depends on $Q^\theta$ and, therefore, is intractable to compute using the form in Lemma~\ref{lem:policy_grad}. 

\begin{lemma}[\cite{sutton2000policy}]\label{lem:policy_grad} Let $\pi^\theta $ be a distribution on the state space given by $\pi^\theta(s) = (1-\gamma) \sum_{t=0}^\infty \gamma^t \pi_t^\theta(s) $, where $\pi_t^\theta$ is the distribution of $s(t)$ under a fixed policy $\theta$ when $s(0)$ is drawn from $\pi_0$. Then, we have,
\begin{align} 
    \nabla J(\theta) = \frac{1}{1-\gamma} \E_{s\sim \pi^\theta, a\sim\zeta^\theta(\cdot|s)} \Big[ Q^{\theta}(s,a) \nabla \log \zeta^\theta (a|s)\Big]. \label{eq:approx_policy_grad}
\end{align}
\end{lemma}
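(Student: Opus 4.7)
The plan is to follow the classical Sutton et al.\ derivation of the policy gradient theorem, adapting the notation to the product-form joint policy $\zeta^\theta(a|s) = \prod_{i=1}^n \zeta_i^{\theta_i}(a_i|s_i)$. First I would introduce the value function $V^\theta(s) = \E_{a\sim\zeta^\theta(\cdot|s)}[Q^\theta(s,a)]$ and note that, by the definition of $J(\theta)$ in \eqref{eq:discounted_cost}, $J(\theta) = \E_{s\sim\pi_0} V^\theta(s)$, so it suffices to compute $\nabla V^\theta(s)$ pointwise and then take the expectation with respect to the initial distribution.

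Next, using the Bellman recursion $Q^\theta(s,a) = r(s,a) + \gamma \sum_{s'} P(s'|s,a) V^\theta(s')$ and the fact that both $r$ and $P$ are independent of $\theta$, I would differentiate $V^\theta(s) = \sum_a \zeta^\theta(a|s) Q^\theta(s,a)$ to obtain
\begin{align*}
\nabla V^\theta(s) = \sum_a \nabla \zeta^\theta(a|s)\, Q^\theta(s,a) + \gamma \sum_a \zeta^\theta(a|s) \sum_{s'} P(s'|s,a)\, \nabla V^\theta(s').
\end{align*}
This is a linear recursion on $\nabla V^\theta$ whose ``forcing term'' is $g(s) := \sum_a \nabla\zeta^\theta(a|s) Q^\theta(s,a)$, so unrolling it $t$ times and invoking the Markov property yields
\begin{align*}
\nabla V^\theta(s) = \sum_{t=0}^\infty \gamma^t\, \E\bigl[ g(s(t)) \,\big|\, s(0)=s\bigr],
\end{align*}
where the expectation is taken over trajectories generated by $\zeta^\theta$. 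I would justify exchanging the sum and limit via absolute summability, which follows from $|Q^\theta| \le \frac{\max|r|}{1-\gamma}$ and the assumption that $\nabla\log\zeta^\theta$ is well-defined (so $\zeta^\theta > 0$ on its support).

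Finally, taking $s$ from $\pi_0$ and swapping the expectations gives
\begin{align*}
\nabla J(\theta) = \sum_{t=0}^\infty \gamma^t\, \E_{s\sim\pi_t^\theta,\, a\sim\zeta^\theta(\cdot|s)}\bigl[ Q^\theta(s,a)\, \nabla\log\zeta^\theta(a|s) \bigr],
\end{align*}
where I have used the log-derivative identity $\nabla\zeta^\theta(a|s) = \zeta^\theta(a|s)\,\nabla\log\zeta^\theta(a|s)$ to rewrite $g$ in the stated form. Recognizing the weighted mixture $(1-\gamma)\sum_t \gamma^t \pi_t^\theta = \pi^\theta$ then gives the prefactor $\frac{1}{1-\gamma}$ and the claimed expression.

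The main obstacle is not conceptual but technical: carefully justifying the recursive unrolling and the order of summation/expectation. In particular, I need to verify that the series $\sum_t \gamma^t \E[g(s(t))|s(0)=s]$ converges absolutely uniformly in $s$ so that Fubini applies, and that $\nabla \zeta^\theta(a|s)$ can be exchanged with the sum over $a$ (automatic because $\mathcal{A}$ is finite). Everything else is bookkeeping, and the product structure of $\zeta^\theta$ plays no role in this lemma—it will only matter later when exploiting the decomposition $\nabla\log\zeta^\theta(a|s) = \sum_i \nabla_{\theta_i}\log\zeta_i^{\theta_i}(a_i|s_i)$.
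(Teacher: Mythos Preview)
Your proposal is correct and follows the classical Sutton et al.\ derivation. Note, however, that the paper does not actually prove this lemma: it is stated as a citation of \cite{sutton2000policy} and used as a known background result, so there is no ``paper's own proof'' to compare against---your argument is essentially the one from the cited reference.
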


\section{Algorithm Design and Results}\label{sec:main_results}

In this paper we propose an algorithm, \algonamefull\ (\algoname), which provably finds an $O(\rhok)$-stationary point of the objective $J(\theta)$ (i.e. a $\theta$ s.t. $\Vert \nabla J(\theta)\Vert^2\leq\varepsilon$) for some $\rho\leq\gamma$,
with complexity scaling in the size of the local state-action space of the largest $\khop$-hop neighborhood.  We state our main result formally in Theorem \ref{thm:main} after introducing the details of \algoname\ and the key idea underlying its design.




\subsection{Key Idea: Exponential Decay of $Q$-function Leads to Efficient Approximation}\label{subsec:key_idea}
Recall that the policy gradient in Lemma~\ref{lem:policy_grad} is intractable to compute due to the dimension of the $Q$-function. Our key idea is that exponential decay of the $Q$ function allows efficient approximation of the $Q$-function via truncation. 
To illustrate this, we start with the definition of the exponential decay property. 
Recall that $\nik$ is the set of $\khop$-hop neighborhood of node $i$ and define $\nminusik = \mathcal{N}/\nik$, i.e. the set of agents that are outside of $i$'th $\khop$-hop neighborhood. We write state $s$ as $(s_{\nik}, s_{\nminusik})$, i.e. the states of agents that are in the $\khop$-hop neighborhood of $i$ and outside of the $\khop$-hop neighborhood respectively. Similarly, we write $a$ as $(a_{\nik}, a_{\nminusik})$. The exponential decay property is then defined as follows.

\begin{definition}\label{def:exp_decaying}
The $(c,\rho)$-exponential decay property holds if, for any localized policy $\theta$, for any $i\in\mathcal{N}$, $s_{\nik}\in \mathcal{S}_{\nik}$, $s_{\nminusik}, s_{\nminusik}'\in \mathcal{S}_{\nminusik} $, $a_{\nik} \in \mathcal{A}_{\nik}$, $a_{\nminusik}, a_{\nminusik}' \in \mathcal{A}_{\nminusik}$, $Q_i^\theta$ satisfies,
$$|Q_i^\theta(s_{\nik},s_{\nminusik}, a_{\nik}, a_{\nminusik}) - Q_i^\theta(s_{\nik},s_{\nminusik}',a_{\nik}, a_{\nminusik}')| \leq c\rhok . $$
\end{definition}




It may not be immediately clear when the exponential decay property holds.  Lemma~\ref{lem:exp_decaying} (a) below highlights that the exponential decay property holds generally with $\rho = \gamma$, without any assumption on the transition probabilities except for the factorization structure \eqref{eq:transition_factor} and the localized policy structure. 
Further, many MDPs in practice have ergodicity and fast mixing properties, and Lemma~\ref{lem:exp_decaying} (b) shows that when such fast mixing property holds, the $(c,\rho)$-exponential decay property holds for some $\rho<\gamma$ depending on the mixing rate. The proof of Lemma~\ref{lem:exp_decaying} is postponed to Appendix \ref{subsec:appendix:exponential_decaying}. The condition on mixing rate in Lemma~\ref{lem:exp_decaying} (b) is similar to those used in the literature on the finite time analysis of RL methods, e.g. \citet{zou2019finite}. In fact, our condition is weaker than the common mixing rate condition in that we only require the distribution of the local state-action pair $(s_i(t),a_i(t))$ to mix, instead of the full state-action pair $(s(t),a(t))$. We leave it as future work to study such ``local'' mixing behavior and its relation to the local transition probabilities \eqref{eq:transition_factor}.

\begin{lemma}\label{lem:exp_decaying} 
Assume $\forall i$, $r_i$ is upper bounded by $\bar{r}$. Then the following holds.
\begin{itemize}
    \item [(a)] The $(\frac{\bar{r}}{1-\gamma}, \gamma)$-exponential decay property holds.
    \item [(b)]If there exists $c'>0$ and $\mu\in(0,1)$ s.t. under any policy $\theta$, the Markov chain is ergodic and starting from any initial state, $\textup{TV}(\pi_{t,i},\pi_{\infty,i}) \leq c' \mu^t, \forall t,$ where $\pi_{t,i}$ is the distribution of $(s_i(t),a_i(t))$ and  $\pi_{\infty,i}$ is the distribution for $(s_i,a_i)$ in stationarity, and recall $\textup{TV}(\cdot,\cdot)$ is the total variation distance. Then, the $(\frac{2c'\bar{r}}{1-\gamma \mu},\gamma\mu)$-exponential decay property holds.  

\end{itemize}
\end{lemma}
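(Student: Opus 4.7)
The unified plan is to write
\[
Q_i^\theta(s,a) = \sum_{t=0}^{\infty} \gamma^t \,\E\bigl[ r_i(s_i(t), a_i(t)) \,\big|\, s(0)=s,\, a(0)=a\bigr]
\]
under the joint policy $\zeta^\theta$, and then bound the term-by-term difference between two initial conditions that agree on $\nik$ but differ on $\nminusik$. The total discounted difference will split naturally by time into an information-propagation region $t\leq\khop$ and a mixing region $t>\khop$, and both parts (a) and (b) exploit exactly the same splitting.

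The key structural lemma I would prove first by induction on $t$: under the factored transition \eqref{eq:transition_factor} together with the localized policy $\zeta^\theta(a|s) = \prod_j \zeta_j^{\theta_j}(a_j|s_j)$, for every $t\in\{0,1,\ldots,\khop\}$ and every node $j$ whose graph distance to $i$ is at most $\khop-t$, the joint distribution of $(s_j(t), a_j(t))$ is a function only of the initial restriction $(s_{\nik}(0), a_{\nik}(0))$. The base case $t=0$ is immediate. For the inductive step, $s_j(t+1)\sim P(\cdot|s_{N_j}(t), a_j(t))$ depends only on $s_{N_j}(t)$ and $a_j(t)$, and $N_j$ sits inside the $(\khop-t)$-hop neighborhood of $i$ (a one-line graph-distance check), so the induction hypothesis applies; then $a_j(t+1)\sim\zeta_j^{\theta_j}(\cdot|s_j(t+1))$ inherits the same dependence. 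Specializing $j=i$, for every $t\in\{0,\ldots,\khop\}$ the distribution of $(s_i(t),a_i(t))$ is identical under the two initial conditions, so
\[
Q_i^{\theta,(1)}(s,a) - Q_i^{\theta,(2)}(s,a) = \sum_{t=\khop+1}^{\infty} \gamma^t \bigl(\E_1[r_i(s_i(t),a_i(t))] - \E_2[r_i(s_i(t),a_i(t))]\bigr),
\]
where the superscripts $(1),(2)$ denote the two initial conditions.

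For part (a), I would use only that $r_i\in[0,\bar{r}]$, so that each single-step expected reward difference is bounded by $\bar{r}$, giving
\[
\bigl|Q_i^{\theta,(1)} - Q_i^{\theta,(2)}\bigr|\;\leq\; \bar{r}\sum_{t=\khop+1}^{\infty}\gamma^t \;=\; \frac{\bar{r}\,\gamma^{\khop+1}}{1-\gamma},
\]
which is exactly the $(\bar{r}/(1-\gamma),\gamma)$-exponential decay property. For part (b), I would keep the same splitting and handle the tail with the mixing hypothesis. By the triangle inequality,
\[
\textup{TV}\bigl(\pi_{t,i}^{(1)},\pi_{t,i}^{(2)}\bigr)\;\leq\; \textup{TV}\bigl(\pi_{t,i}^{(1)},\pi_{\infty,i}\bigr) + \textup{TV}\bigl(\pi_{t,i}^{(2)},\pi_{\infty,i}\bigr) \;\leq\; 2c'\mu^t,
\]
and since $r_i\in[0,\bar{r}]$, a standard coupling inequality gives $|\E_1[r_i(s_i(t),a_i(t))]-\E_2[r_i(s_i(t),a_i(t))]|\leq \bar{r}\cdot\textup{TV}(\pi_{t,i}^{(1)},\pi_{t,i}^{(2)})$. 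Summing the resulting geometric series,
\[
\bigl|Q_i^{\theta,(1)} - Q_i^{\theta,(2)}\bigr|\;\leq\; \sum_{t=\khop+1}^{\infty}\gamma^t\cdot 2c'\bar{r}\mu^t \;=\; \frac{2c'\bar{r}\,(\gamma\mu)^{\khop+1}}{1-\gamma\mu},
\]
yielding the $(2c'\bar{r}/(1-\gamma\mu),\gamma\mu)$-exponential decay property.

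The only genuine obstacle is making the inductive structural lemma airtight: verifying the containment $N_j\subseteq N_i^{\khop-t}$ whenever $j$ lies within $\khop-t-1$ hops of $i$ is a one-line graph-distance check, but writing down the conditional-independence bookkeeping for the full joint of $(s_j(t),a_j(t))$ (rather than just $s_j(t)$) under the product policy requires some care. Once that is handled, the $t\leq\khop$ versus $t>\khop$ split lines up exactly with the two stated constants, so I expect the argument to close cleanly in both parts.
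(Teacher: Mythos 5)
Your proposal is correct and follows essentially the same route as the paper: both split the discounted sum at $t=\khop$, observe that the marginal of $(s_i(t),a_i(t))$ is unaffected by the far-away initial conditions for $t\leq\khop$ (the paper asserts this where you sketch the induction), and bound the tail by $\bar{r}\cdot\textup{TV}(\pi_{t,i},\pi_{t,i}')$ with $\textup{TV}\leq 1$ for part (a) and $\textup{TV}\leq 2c'\mu^t$ for part (b).
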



Our definition of exponential decay is similar in spirit to the ``correlation decay'', or ``spatial decay'' that has been studied in the literature \citep{gamarnik2013correlation,gamarnik2014correlation,bamieh2002distributed}, though these works consider very different settings. For example, \cite{gamarnik2013correlation} and \cite{gamarnik2014correlation} study optimization in a graphical model setting (no concept of state and/or time), and show that the effect of cost functions far away on the optimal solution at a particular node shrinks exponentially in their graph distance, under certain weak interaction assumptions. Compared to these works where the optimization problem is static, we focus on an MDP setting which has states that evolve on a time axis. Further, our exponential decay is in terms of the $Q$ functions, as opposed to the optimal solution. That being said, we believe there are deep connections between our results and that in \citet{gamarnik2013correlation,gamarnik2014correlation}, and we leave the investigation of it as future work.  

The power of the exponential decay property is that such properties usually lead to scalable and distributed algorithm design, as in \citet{gamarnik2013correlation}. In our context, the exponential decay property guarantees that the dependence of $Q_i^\theta$ on other agents shrinks quickly as the distance between them grows. This motivates us to consider the following class of truncated $Q$-functions, 
\begin{align}
    \hat{Q}_i^\theta (s_{\nik}, a_{\nik})  = \sum_{s_{\nminusik}\in \mathcal{S}_{\nminusik} , a_{\nminusik}\in \mathcal{A}_{\nminusik}} w_i(s_{\nminusik}, a_{\nminusik};s_{\nik}, a_{\nik} ) Q_i^\theta (s_{\nik},s_{\nminusik}, a_{\nik},a_{\nminusik}),\label{eq:truncated_q}
\end{align}
where $w_i(s_{\nminusik}, a_{\nminusik};s_{\nik}, a_{\nik} )$ are \textit{any} non-negative weights satisfying 
\begin{align}
    \sum_{s_{\nminusik}\in \mathcal{S}_{\nminusik} , a_{\nminusik}\in \mathcal{A}_{\nminusik}} w_i(s_{\nminusik}, a_{\nminusik};s_{\nik}, a_{\nik} ) = 1, \qquad \forall (s_{\nik}, a_{\nik}) \in \mathcal{S}_{N_{i}^k}\times \mathcal{A}_{N_{i}^k}. \label{eq:truncated_q_weights}
\end{align}With the definition of the truncated $Q$-function, our key insight is the following Lemma~\ref{lem:truncated_pg}, which says when the exponential decay property holds, the truncated $Q$-function \eqref{eq:truncated_q} approximates the full $Q$-function with high accuracy and can be used to approximate the policy gradient. 
The proof of Lemma~\ref{lem:truncated_pg} is postponed to {Appendix~\ref{sec:appendix_truncated_pg}.}

\begin{lemma}\label{lem:truncated_pg} 
Under the $(c,\rho)$-exponential decay property, the following holds:
\begin{itemize}
  \item [(a)]   Any truncated $Q$-function in the form of \eqref{eq:truncated_q} satisfies,
    \begin{align*} \sup_{(s,a)\in \mathcal{S}\times\mathcal{A}} |\hat{Q}_i^\theta (s_{\nik}, a_{\nik}) - Q_i^\theta(s,a)|\leq c\rhok.  \end{align*} 
    \item [(b)] Given $i$, define the following truncated policy gradient,
\begin{align}
    \hat{h}_i(\theta) = \frac{1}{1-\gamma} \E_{s\sim\pi^\theta,a\sim \zeta^\theta(\cdot|s)} \Big[\frac{1}{n}\nabla_{\theta_i} \log\zeta_i^{\theta_i} (a_i|s_i)\sum_{j\in {\nik}} \hat{Q}_j^{\theta} (s_{\njk},a_{\njk})\Big] , \label{eq:truncated_pg}
\end{align}
where $\hat{Q}_j^\theta$ can be any truncated $Q$-function in the form of \eqref{eq:truncated_q}. Then, if $\Vert \nabla_{\theta_i} \log \zeta_i^{\theta_i} (a_i|s_i)\Vert\leq L_i, \forall a_i, s_i$, we have
$\Vert \hat{h}_i(\theta) - \nabla_{\theta_i} J(\theta)\Vert \leq  \frac{cL_i }{1-\gamma}\rhok $.
\end{itemize}
 
\end{lemma}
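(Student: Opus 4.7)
Part (a) should follow in one line from the definition of exponential decay: since the weights $w_i$ form a probability distribution, I would write $\hat Q_i^\theta(s_{\nik},a_{\nik}) - Q_i^\theta(s,a)$ as the single convex combination $\sum_{s',a'} w_i(s',a';s_{\nik},a_{\nik})\bigl[Q_i^\theta(s_{\nik},s',a_{\nik},a') - Q_i^\theta(s_{\nik},s_{\nminusik},a_{\nik},a_{\nminusik})\bigr]$, apply the triangle inequality to pull the absolute value inside, and invoke the $(c,\rho)$-exponential decay property to bound every summand by $c\rhok$; the weights then sum back to one.

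For part (b) I would first specialize Lemma~\ref{lem:policy_grad} to the partial derivative with respect to $\theta_i$. Because $\log\zeta^\theta(a|s) = \sum_{j=1}^n \log\zeta_j^{\theta_j}(a_j|s_j)$ has only the $j=i$ summand depending on $\theta_i$, and because $Q^\theta = \tfrac{1}{n}\sum_j Q_j^\theta$, the true gradient becomes
\begin{equation*}
\nabla_{\theta_i}J(\theta) = \frac{1}{1-\gamma}\,\E_{s\sim\pi^\theta,\,a\sim\zeta^\theta(\cdot|s)}\!\left[\frac{1}{n}\nabla_{\theta_i}\log\zeta_i^{\theta_i}(a_i|s_i)\sum_{j=1}^n Q_j^\theta(s,a)\right].
\end{equation*}
Subtracting $\hat h_i(\theta)$ leaves an expectation whose integrand is $\tfrac{1}{n}\nabla_{\theta_i}\log\zeta_i^{\theta_i}(a_i|s_i)$ times the deficit $\sum_{j=1}^n Q_j^\theta(s,a) - \sum_{j\in\nik}\hat Q_j^\theta(s_{\njk},a_{\njk})$, which I would split along $j\in\nik$ versus $j\notin\nik$.

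For $j\in\nik$, part (a) applied at node $j$ already bounds $|Q_j^\theta(s,a)-\hat Q_j^\theta(s_{\njk},a_{\njk})|$ by $c\rhok$ pointwise, so those terms are harmless. The main obstacle is the ``orphan'' tail $\sum_{j\notin\nik}Q_j^\theta(s,a)$ that only appears on the true-gradient side. I would dispose of it via the score-function baseline identity $\E_{a\sim\zeta^\theta(\cdot|s)}[\nabla_{\theta_i}\log\zeta_i^{\theta_i}(a_i|s_i)\,g(s,a_{-i})]=0$ for any $g$ not depending on $a_i$, which follows from $\sum_{a_i}\zeta_i^{\theta_i}(a_i|s_i)\nabla_{\theta_i}\log\zeta_i^{\theta_i}(a_i|s_i) = \nabla_{\theta_i}\sum_{a_i}\zeta_i^{\theta_i}(a_i|s_i) = 0$ together with the product form of $\zeta^\theta$. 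For $j\notin\nik$, graph-distance symmetry gives $i\notin\njk$, so the $a_i$-coordinate sits inside $a_{\nminusjk}$; freezing it at an arbitrary value in $Q_j^\theta(s,a)$ produces a function $g_j(s,a_{-i})$ that has zero expected contribution by the baseline identity and satisfies $|Q_j^\theta(s,a)-g_j(s,a_{-i})|\leq c\rhok$ by exponential decay applied at node $j$. Aggregating the resulting $n$ pointwise errors of $c\rhok$ each, using $\Vert\nabla_{\theta_i}\log\zeta_i^{\theta_i}(a_i|s_i)\Vert\leq L_i$ under the expectation, and absorbing the $\tfrac{1}{n(1-\gamma)}$ prefactor will deliver $\Vert\hat h_i(\theta)-\nabla_{\theta_i}J(\theta)\Vert \leq \tfrac{cL_i}{1-\gamma}\rhok$.
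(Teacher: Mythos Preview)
Your proposal is correct and follows essentially the same route as the paper. Part (a) is identical. For part (b), both you and the paper reduce $\nabla_{\theta_i}\log\zeta^\theta$ to $\nabla_{\theta_i}\log\zeta_i^{\theta_i}$, split the deficit according to $j\in\nik$ versus $j\notin\nik$, and kill the latter contribution with the score-function identity. The only cosmetic difference is the choice of baseline for $j\notin\nik$: the paper adds and subtracts the truncated $\hat Q_j^\theta(s_{\njk},a_{\njk})$ (which does not depend on $a_i$ since $i\notin\njk$) so that part (a) can be applied uniformly over all $j\in\mathcal{N}$, whereas you freeze $a_i$ inside the full $Q_j^\theta$ and invoke the exponential decay property directly. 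Both baselines are $a_i$-free and within $c\rhok$ of $Q_j^\theta$, so the final bound is the same.
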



The power of this lemma is that the truncated $Q$ function has a much smaller dimension than the true $Q$ function, and is thus scalable to compute and store.  However, despite the reduction in dimension, the error resulting from the approximation is small. In the next section, we use this idea to design a scalable algorithm. 



\subsection{Algorithm Design: \algonamefull\ (\algoname)}\label{subsec:algo}

The good properties of the truncated $Q$-function open many possibilities for algorithm design. For instance, one can first obtain the truncated $Q$-function in some way (which could be much easier than directly computing the full $Q$-function) and then do a policy gradient step using the Lemma~\ref{lem:truncated_pg}. In this subsection, we propose one particular approach using the actor critic framework.  Our approach, \algonamefull\ (\algoname), uses temporal difference (TD) learning to obtain the truncated $Q$-function and then uses policy gradient for policy improvement. The pseudocode of the proposed algorithm is given in Algorithm~\ref{algorithm:key_algo}.

\textbf{Overall structure.} The overall structure of \algoname\ is a for-loop from line~\ref{algo:outer_start} to line~\ref{algo:outer_end}. Inside the outer loop, there is an inner loop (line \ref{algo:innerloop_start} through line \ref{algo:innerloop_end}) that uses temporal difference learning to get the truncated $Q$-function, which is followed by a policy gradient step that does policy improvement. 

\textit{The Critic: TD-inner loop.} Line \ref{algo:innerloop_start} through line \ref{algo:innerloop_end} is the policy evaluation inner loop that obtains the truncated $Q$ function, where line \ref{algo:td_1} and \ref{algo:td_2} are the temporal difference update. We note that steps \ref{algo:td_1} and \ref{algo:td_2} use the same update equation as TD learning, except that it ``pretends'' $(s_{\nik}, a_{\nik})$  is the true state-action pair while the true state-action pair should be $(s,a)$. As will be shown in the theoretic analysis, such a TD update implicitly gives an estimate of a truncated $Q$ function.

\textit{The Actor: policy gradient.} Line~\ref{algo:actor_start} through line \ref{algo:actor_end} define the actor actions.  Here, each agent calculates an estimate of the truncated gradient based on~\eqref{eq:truncated_pg}, and then conducts a gradient step.

\begin{algorithm2e}[t]\caption{\algoname: \algonamefull}\label{algorithm:key_algo}
\DontPrintSemicolon
\LinesNumbered
\KwIn{$\theta_i(0)$; parameter $\khop$; $T$, length of each episode; step size parameters $h,t_0,\eta$.}
	    
	    \For{$m=0,1,2,\ldots$}{\label{algo:outer_start}
	         Sample initial state $s(0) \sim \pi_0$, each agent $i$ takes action $a_i(0) \sim \zeta_i^{\theta_i(m)}(\cdot|s_i(0))$, receives reward $r_i(0) = r_i (s_i(0),a_i(0))$.\;
	         Initialize $\hat{Q}_i^{0}\in \R^{\mathcal{S}_{\nik}\times \mathcal{A}_{\nik}}$ to be the all zero vector.\; 
	        \For{$t=1$ \KwTo $T$}{ \label{algo:innerloop_start}
	         Get state $s_i(t)$, take action $a_i(t) \sim \zeta_i^{\theta_i(m)}(\cdot|s_i(t))$, get reward $r_i(t) = r_i(s_i(t),a_i(t))$.\;       
	          
	         Update the truncated $Q$ function with step size $\alpha_{t-1} = \frac{h}{t-1+t_0}$, \;
	         {
	  $\hat{Q}_i^{t}(s_{\nik}(t-1),a_{\nik}(t-1)) = (1 - \alpha_{t-1}) \hat{Q}_i^{t-1}(s_{\nik}(t-1),a_{\nik}(t-1)) + \alpha_{t-1} (r_i(t-1) +  \gamma \hat{Q}_i^{t-1}(s_{\nik}(t),a_{\nik}(t)) ),$\; \label{algo:td_1}
 $\hat{Q}_i^{t}(s_{\nik},a_{\nik}) =  \hat{Q}_i^{t-1}(s_{\nik},a_{\nik}) \text{ for } (s_{\nik},a_{\nik})\neq (s_{\nik}(t-1),a_{\nik}(t-1)) .$\; \label{algo:td_2}
}

	        } \label{algo:innerloop_end}
	        Each agent $i$ calculates approximated gradient, \; \label{algo:actor_start}
	        $\hat{g}_i(m) =  \sum_{t=0}^{T} \gamma^t \frac{1}{n}\sum_{j\in \nik} \hat{Q}_j^T (s_{\njk}(t),a_{\njk}(t)) \nabla_{\theta_i} \log \zeta_i^{\theta_i(m)} (a_i(t)|s_i(t)) . $\;
	         Each agent $i$ conducts gradient step $\theta_i(m+1) = \theta_i(m) + \eta_m \hat{g}_i(m)$  with $\eta_m = \frac{\eta}{\sqrt{m+1}}$.\label{algo:actor_pg} \; \label{algo:actor_end}
	    }\label{algo:outer_end}
\end{algorithm2e}
\textbf{Communication.} To implement our training algorithm, each agent needs to communicate with other agents in its $\khop$-hop neighborhood in line 7 and line 11; after training is done, each agent implements its localized policy that does not need communication. This communication requirement is weaker than the ``centralized training with decentralized execution'' paradigm in the multi-agent RL literature \citep{lowe2017multi}, where in the training phase, global communication is used. We also comment that when $\khop = 0$, our algorithm does not need communication and is effectively the same as the independent learner approach in the literature \citep{tan1993multi,lowe2017multi}, as each agent simply runs a single-agent actor critic method based on its local state and local action. When $\khop > 1$, our algorithm requires communication with agents beyond the direct $1$-hop neighbors, which may be unrealistic for some applications. An interesting future direction is to reduce the communication requirements, e.g. potentially using consensus schemes like in \citet{zhang2018fully}, and also techniques that only communicate quantized bits as opposed to real numbers \citep{magnusson2020maintaining}.

\textbf{Discussion.}
Our algorithm serves as an initial concrete demonstration of how to make use of the truncated $Q$-functions to develop a scalable RL method for networked systems. There are many extensions and other approaches that could be pursued, either within the actor critic framework or beyond. One immediate extension is to do a warm start, i.e., initialize $\hat{Q}_i^0$ as the final estimate $\hat{Q}_i^T$ in the previous outer-loop. Additionally, one can use the TD-$\lambda$ variant of TD learning, incorporate variance reduction schemes like the advantage function (Advantage Actor Critic), or incorporate function approximation. Further, beyond the actor critic framework, another direction is to develop $Q$-learning/SARSA type algorithms based on the truncated $Q$-functions. An appealing aspect of $Q$-learning/SARSA algorithms is that they may exhibit better convergence properties, but the challenge is that, unlike the actor critic framework, it is not straightforward to enforce the policy to be local in $Q$-learning/SARSA algorithms. These are interesting topics for future work.

\begin{remark}[Model-based vs model-free]
		We note that by using an actor critic framework, the proposed approach is model-free, meaning it does not explicitly estimate the transition probabilities and the reward function. This is in contrast to model-based RL, which explicitly estimates the transition probabilities (or parameters that determine the transition probabilities, like the $\delta_i$, $\beta_i(\cdot)$ parameter in the epidemic example in Section~\ref{subsec:examples}). On one hand, it is known that model-based RL can be more sample efficient than model-free RL in certain circumstances \citep{tu2019gap}. Additionally, for specific applications, model-based control design may come with properties like robustness \citep{varaiya2013max}. On the other hand, model-free RL offers more flexibility since it does not impose assumptions on the model class. The comparison and tradeoff between model-based and model-free approaches is an open research question and is beyond the scope of this paper. We refer the reader to \citet{tu2019gap,qu2020combining} for more details. 
\end{remark}

\subsection{Approximation Bound}\label{subsec:convergence}
In this section, we state and discuss the formal approximation guarantee for \algoname.  Before stating the theorem, we first state the assumptions we use. The first assumption is standard in the RL literature and bounds the reward and state/action space size.  

\begin{assumption}[Bounded reward and state/action space size] \label{assump:bounded_reward}
	The reward is upper bounded as $0\leq r_i(s_i,a_i) \leq \bar{r}, \forall i, s_i, a_i$. The individual state and action space size are upper bounded as $|\mathcal{S}_i|\leq S, |\mathcal{A}_i|\leq A, \forall i$.
\end{assumption}

\begin{assumption}[Exponential decay]\label{assump:exp_decaying} The $(c,\rho)$-exponential decay property holds for some $\rho\leq\gamma$. 
\end{assumption}

\noindent Note that under Assumption~\ref{assump:bounded_reward}, Assumption~\ref{assump:exp_decaying} automatically holds with $\rho = \gamma$, cf. Lemma~\ref{lem:exp_decaying} (a). However, we state the exponential decay property as an assumption to account for the more general case that $\rho$ could be strictly less than $\gamma$, cf. Lemma~\ref{lem:exp_decaying} (b).

Our third assumption can be interpreted as an ergodicity condition which ensures that the state-action pairs are sufficiently visited. 

\begin{assumption}[Sufficient local exploration]\label{assump:local_explor} There exists positive integer $\tau$ and $\sigma\in (0,1)$ s.t. under any fixed policy $\theta$ and any initial state-action $(s,a)\in\mathcal{S}\times\mathcal{A}$, $\forall i \in\mathcal{N}, \forall (s_{\nik}', a_{\nik}')\in \mathcal{S}_{\nik}\times \mathcal{A}_{\nik}$, we have
		$P( (s_{\nik}(\tau), a_{\nik}(\tau) )=  (s_{\nik}', a_{\nik}') |(s(1),a(1))=(s,a)) \geq \sigma $.
\end{assumption}

\noindent Assumption \ref{assump:local_explor} requires that every state action pair in the $\khop$-hop neighborhood must be visited with some positive probability after some time. This type of assumption is common for finite time convergence results in RL. For example, in \citet{srikant2019finite,li2020sample}, it is assumed that every state-action pair is visited with positive probability in the stationary distribution and the state-action distribution converges to the stationary distribution with some rate.  This implies our assumption which is weaker in the sense that we only require local state-action pair $(s_{\nik},a_{\nik})$ to be visited as opposed to the full state-action pair $(s,a)$. Having said that, we note that by making Assumption \ref{assump:local_explor}, we do not consider the exploration-exploitation tradeoff, which is a challenging issue even in single-agent RL. One potential way to relax Assumption~\ref{assump:local_explor} is to use Upper Confidence Bound (UCB) bonuses to encourage exploration, which has been proposed in single-agent RL \citep{jin2018q}. We leave the study of the exploration-exploitation tradeoff in the multi-agent networked setting as future work.  

Finally, we assume boundedness and Lipschitz continuity of the gradients, which is standard in the RL literature.

\begin{assumption}[Bounded and Lipschitz continuous gradient] \label{assump:gradient_bounded} For any $i$, $a_i$, $s_i$ and $\theta_i$, we assume $\Vert \nabla_{\theta_i} \log \zeta_i^{\theta_i} (a_i|s_i)\Vert\leq L_i$. As a result, $\Vert\nabla_\theta \log \zeta^{\theta}(a|s)\Vert\leq L = \sqrt{\sum_{i=1}^n L_i^2}$. Further, assume $\nabla J(\theta)$ is $L'$-Lipschitz continuous in $\theta$.
\end{assumption}

With these assumptions in hand, we are ready to state our convergence result.

\begin{theorem} \label{thm:main}
Under Assumption \ref{assump:bounded_reward}, \ref{assump:exp_decaying}, \ref{assump:local_explor} and \ref{assump:gradient_bounded}, for any $\delta\in(0,1)$, $M\geq 3$, suppose the critic step size $\alpha_t = \frac{h}{t+t_0}$ satisfies $h\geq \frac{1}{\sigma}\max(2,\frac{1}{1-\sqrt{\gamma}})$, $t_0\geq\max(2h, 4\sigma h, \tau)$; and the actor step size satisfies $\eta_m = \frac{\eta}{\sqrt{m+1}}$ with $\eta\leq \frac{1}{4L'}$. Further, if the inner loop length $T$ is large enough s.t. $ T+1\geq \log_\gamma \frac{c(1-\gamma)}{\bar{r}}+ (\khop+1)\log_\gamma\rho  $ and
{\small	\begin{align}
	    \frac{C_a(\frac{\delta}{2 nM},T)}{\sqrt{T+t_0}} + \frac{C_a'}{T+t_0}  \leq \frac{2c\rhok }{(1-\gamma)^2}, \label{eq:main_theorem_T_bound}
	\end{align}}where $C_a(\delta,T)=   \frac{6\bar{\epsilon}}{1-\sqrt{\gamma}}  \sqrt{  \frac{\tau h}{\sigma}  [ \log(\frac{2\tau T^2 }{\delta}) + \fk \log SA]}$ and $C_a'= \frac{2}{1-\sqrt{\gamma}} \max( \frac{16\bar\epsilon  h\tau}{\sigma}, \frac{2 \bar{r}}{1-\gamma}{(\tau+t_0)})$,
with $\bar{\epsilon} =4 \frac{\bar{r}}{1-\gamma} + 2\bar{r} $ and we recall that $\fk=\max_i|\nik|$ is the size of the largest $\khop$-neighborhood.
Then, with probability at least $1-\delta$,
{\small\begin{align}
 \frac{\sum_{m=0}^{M-1} \eta_m\Vert\nabla J(\theta(m))\Vert^2}{\sum_{m=0}^{M-1} \eta_m } \leq \frac{\frac{2\bar{r}}{\eta(1-\gamma)}+ \frac{8\bar{r}^2 L^2}{(1-\gamma)^4} \sqrt{\log M \log\frac{4}{\delta} }  + \frac{96 \bar{r}^2 L' L^2}{(1-\gamma)^4}\eta \log M}{ \sqrt{M+1}} + \frac{12 L^2 c\bar{r}}{(1-\gamma)^5} \rhok  . \label{eq:main_theorem_bound}
	\end{align}}    
\end{theorem}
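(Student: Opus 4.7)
The plan is to combine a finite-time analysis of the TD-based critic with a non-convex stochastic-gradient analysis of the actor, with Lemma~\ref{lem:truncated_pg} serving as the bridge that converts small $Q$-function error into small policy-gradient error. At each outer iteration $m$, the estimator $\hat g_i(m)$ of $\nabla_{\theta_i} J(\theta(m))$ splits into three pieces: (i) the truncation bias between the ``ideal'' truncated policy gradient $\hat h_i(\theta(m))$ of \eqref{eq:truncated_pg} and $\nabla_{\theta_i}J(\theta(m))$, already bounded by $\frac{cL_i}{1-\gamma}\rhok$ via Lemma~\ref{lem:truncated_pg}(b); (ii) a critic error between the TD output $\hat Q_i^T$ and the fixed point it is converging to; and (iii) a trajectory-sampling plus finite-horizon error from replacing the expectation in \eqref{eq:truncated_pg} by a single length-$T$ roll-out. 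Once all three are controlled with high probability, a standard descent-lemma argument using the $L'$-Lipschitz gradient of $J$ and the step size $\eta_m=\eta/\sqrt{m+1}$ yields the $1/\sqrt{M+1}$ rate in \eqref{eq:main_theorem_bound} plus the $\rhok$ residual.

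The central and hardest step is the critic bound, since the TD update treats $(s_{\nik},a_{\nik})$ as if it were the Markov state even though the induced local process is in general non-Markovian. The right viewpoint is that the update is a stochastic approximation whose deterministic fixed point $\bar Q_i^\theta$ is a weighted average of the true $Q_i^\theta$ over the unseen coordinates $(s_{\nminusik},a_{\nminusik})$, with weights determined by the joint stationary conditional distribution of the hidden coordinates given the observed ones; in particular $\bar Q_i^\theta$ has exactly the form \eqref{eq:truncated_q}, so Lemma~\ref{lem:truncated_pg}(a) immediately gives $\|\bar Q_i^\theta - Q_i^\theta\|_\infty \le c\rhok$, regardless of which weights the dynamics select. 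What remains is a finite-time bound of the form $\|\hat Q_i^T-\bar Q_i^\theta\|_\infty \le C_a(\delta,T)/\sqrt{T+t_0}+C_a'/(T+t_0)$ with probability at least $1-\delta$. I would obtain this by partitioning the trajectory into blocks of length $\tau$ so that Assumption~\ref{assump:local_explor} guarantees each local state-action pair is visited with probability $\ge\sigma$ per block, yielding an $\ell_\infty$ contraction of the expected TD operator at rate roughly $\sqrt\gamma$ (hence the $\tfrac{1}{1-\sqrt\gamma}$ factor), followed by a Freedman/Azuma concentration on the martingale of TD noise over blocks, union-bounded over the at most $(SA)^{\fk}$ table entries, to produce the $\fk\log(SA)+\log(\tau T^2/\delta)$ factor inside $C_a$. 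The obstacle here, and the single point where the analysis must deviate from the textbook Markovian TD proof, is that the local coordinate is not Markov and carries long-range correlations from the hidden part of the state; using the \emph{local} Assumption~\ref{assump:local_explor} rather than a global mixing assumption is what keeps all polynomial factors scaling in $\fk$ rather than the exponentially large global state-action size.

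For the actor, I would then apply the $L'$-smoothness descent inequality
\begin{equation*}
J(\theta(m+1)) \ge J(\theta(m)) + \eta_m\langle\nabla J(\theta(m)),\hat g(m)\rangle - \tfrac{L'}{2}\eta_m^2\|\hat g(m)\|^2,
\end{equation*}
rearrange it using $\langle u,v\rangle \ge \|u\|^2-\|u\|\,\|u-v\|$, sum over $m=0,\dots,M-1$, and divide by $\sum_m\eta_m \asymp \sqrt{M+1}$. The bias portion of $\hat g(m)-\nabla J(\theta(m))$, controlled by items (i)--(iii) together with the hypothesis $T+1\ge \log_\gamma(c(1-\gamma)/\bar r)+(\kappa+1)\log_\gamma\rho$ and \eqref{eq:main_theorem_T_bound}, contributes the terminal $\tfrac{12L^2 c\bar r}{(1-\gamma)^5}\rhok$ residual; the martingale portion $\sum_m \eta_m\langle\nabla J(\theta(m)),\hat g(m)-\mathbb E[\hat g(m)\mid\mathcal F_m]\rangle$ contributes the $\sqrt{\log M \log(4/\delta)}$ factor via Azuma; and $\sum_m \eta_m^2\|\hat g(m)\|^2 \le \tfrac{\bar r^2 L^2}{(1-\gamma)^4}\sum_m\eta_m^2 \asymp \eta\log M$ contributes the third summand in the numerator of \eqref{eq:main_theorem_bound}. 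A single union bound allocating $\delta/(2nM)$ to each critic event and $\delta/2$ to the actor martingale keeps the total failure probability at $\delta$.
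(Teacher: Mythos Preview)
Your proposal is essentially correct and mirrors the paper's proof: the same three-way decomposition of $\hat g(m)-\nabla J(\theta(m))$ into critic error, trajectory-sampling martingale, and truncation/finite-horizon bias (the paper's $e^1,e^2,e^3$ in \eqref{appendix:actor:eq:grad_err_decomposition}); the same $L'$-smoothness descent argument for the actor with Azuma on the martingale piece; and the same $\delta/(2nM)$ versus $\delta/2$ probability budget.

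One technical point where the paper is more careful than your sketch: you posit a single ``deterministic fixed point $\bar Q_i^\theta$\ldots with weights determined by the joint stationary conditional distribution.'' Assumption~\ref{assump:local_explor} does \emph{not} guarantee a stationary distribution exists, so the paper instead works with a \emph{time-varying} family of fixed points $\hat Q_i^{*,t-1}$ (Lemma~\ref{lem:critic:barab}), one for each conditional distribution $d_{t-1}$ of $z(t-1)$ given $\mathcal F_{t-\tau}$. The crucial observation---which you do have---is that every member of this family is uniformly $O(\rhok/(1-\gamma))$-close to $Q_i^*$, because $\Pi^{d}Q_i^*$ is always of the truncated form~\eqref{eq:truncated_q}. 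Note also that the fixed point $\hat Q_i^{*,t-1}$ is not itself a truncated $Q$-function of the form~\eqref{eq:truncated_q}; rather $\Pi^d Q_i^*$ is, and a fixed-point/contraction argument contributes an extra $\tfrac{1}{1-\gamma}$ factor (see \eqref{appendix:critic:eq:fixed_point_error}). The recursive error decomposition (Lemma~\ref{appendix:critic:lem:error_recursive}) plus the induction with Lemma~\ref{lem:b_kt_bound} is how the paper handles this moving target and extracts the $\tfrac{1}{1-\sqrt\gamma}$ factor you anticipated; your ``block of length $\tau$'' picture is the right intuition for the $\tau$-shifted martingale structure (Lemma~\ref{appendix:critic:lem:azuma}) but the full argument does not reduce to a standard block-Markov analysis precisely because of the non-stationarity.
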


The proof of Theorem~\ref{thm:main} is deferred to Section~\ref{sec:proof}. To interpret the result, note that the first term in \eqref{eq:main_theorem_bound} converges to $0$ in the order of $\tilde O(\frac{1}{\sqrt{M}})$ and the second term, which we denote as $\varepsilon_\khop$, is the bias caused by the truncation of the $Q$-function and it scales in the order of $O(\rhok)$. As such, our method \algoname\ will eventually find an $O(\rhok)$-approximation of a stationary point of the objective function $J(\theta)$, which could be very close to a true stationary point even for small $\khop$ as $\varepsilon_\khop$ decays exponentially in $\khop$. 

In terms of complexity, \eqref{eq:main_theorem_bound} gives that, to reach a $O(\varepsilon_\khop)$-approximate stationary point, the number of outer-loop iterations required is $M \geq \tilde{\Omega}(\frac{1}{\varepsilon_\khop^2} poly(\bar{r},L,L', \frac{1}{(1-\gamma)}))$, which scales polynomially with the parameters of the problem. We emphasize that it does not scale exponentially with $n$. Further, since the left hand side of \eqref{eq:main_theorem_T_bound} decays to $0$ as $T$ increases in the order of $\tilde{O}(\frac{1}{\sqrt{T}})$ and the right hand side of \eqref{eq:main_theorem_T_bound} is in the same order as $O(\varepsilon_\khop)$, the inner-loop length required is $T\geq \tilde{\Omega}(\frac{1}{\varepsilon_k^2}poly(\tau,\frac{1}{\sigma},\frac{1}{1-\gamma},\bar{r}, \fk ))$. 
This iteration complexity for the inner loop can potentially be further reduced if we do a warm start for the inner-loop, as the $Q$-estimate from the previous outer-loop should be already a good estimate for the current outer-loop. We leave the finite time analysis of the warm start variant as future work.

In the complexity bound, a key parameter is $\sigma$, which we recall is defined in Assumption~\ref{assump:local_explor} and it roughly means the probability that a state-action pair in a $\khop$-hop neighborhood is visited. Suppose we interpret $\sigma$ to scale with $\sigma\sim \frac{1}{(|S||A|)^{f(\khop)}}$, where we recall $\fk$ is the size of the largest $\khop$-hop neighborhood around any node, and $(|S||A|)^{f(\khop)}$ is the largest state-action space size of $\khop$-hop neighborhoods of any node. Then, the iteration complexity scales with $\frac{1}{\sigma} \sim  (|S||A|)^{f(\khop)}$, whereas the steady state error depends on $\rhok$. Therefore, $\khop$ is a parameter that balances between complexity and performance -- the larger $\khop$ is, the higher the complexity but the smaller the steady state error. Exactly how the complexity grows depends on $f(\kappa)$, the size of $\khop$-hop neighborhoods, which in turn depends on the topology of the interaction graph. On one hand, for a sparse graph where $f(\khop)$ is a constant much smaller than the number of nodes $n$, the state-action space size of $\khop$-hop neighborhoods is much smaller than the global state-action space size, in which case our algorithm can avoid the exponential scaling in $n$ and is scalable to implement. In the case where the graph is very dense or even complete, we have $\fk= \Omega(n)$ for any $\khop>0$ and our algorithm still suffers from the curse of dimensionality as the complexity scales with $(|S||A|)^{\Omega(n)}$. However, in the case of dense or complete graphs, the local interaction structure becomes degenerate as it takes exponentially many parameters to even specify the local transition probabilities in \eqref{eq:transition_factor}. How to handle the dense or complete graph case remains an interesting future direction, and we believe more structural assumptions are needed to break the curse of dimensionality in that case. 
 


Another thing to note in Theorem~\ref{thm:main} is that our guarantee is an upper bound on the running average of the squared norm of the gradient and is essentially a local convergence guarantee. This kind of local convergence is typical for actor critic methods, see e.g. \cite{rl_konda2000actor,zhang2018fully}. Recently, there have been works studying the optimization landscape for policy optimization, showing that in single-agent settings and for certain parameterizations of the policy, the objective of the policy optimization problem $J(\theta)$ may satisfy the gradient dominance property, indicating any stationary point will be a global optimum \citep{bhandari2019global,agarwal2019theory}. One interesting future direction is to show whether similar properties hold in our multi-agent networked setting with the local interaction structure, and whether the local convergence guarantee can imply a global optimality guarantee.

When put in the context of the broader literature in multi-agent RL, our contribution can be interpreted as follows. In multi-agent RL, the typical way to handle the curse of dimensionality is to use function approximation of the $Q$ function. For example, \citet{zhang2018fully} uses linear function to approximate the $Q$-function. Alternatively, it has also been popular to use neural networks to do the approximation \citep{lowe2017multi}. However, in these approximation methods, the resulting steady state error in the actor critic framework depends on the approximation error, and it is generally unclear how to choose the function approximator that is both computationally tractable and also accurate in representing the true $Q$ function. In the context of these works, our truncated $Q$-functions can be viewed as a specific way of function approximation that exploits the local interaction structure in our problem, and our method is not only computationally tractable but also has a small approximation error. 
\section{Proof of Main Result}\label{sec:proof}
In this section, we provide the proof of the main result Theorem~\ref{thm:main} with some auxiliary derivations postponed to the appendix. As our algorithm is an actor critic algorithm, the proof is divided into two parts: firstly, we provide an analysis of the critic, i.e. TD learning that estimates the truncated $Q$-function; secondly, we analyze the actor and finish the proof of Theorem~\ref{thm:main}.

\textbf{Analysis of the critic. }The first part of the analysis concerns the critic, and we show that the critic inner loop converges to an estimate with steady-state error exponentially small in $\khop$. Specifically, recall that within iteration $m$ the inner loop update is
\begin{subequations}\label{appendix:critic:td_original}
\begin{align}
    \hat{Q}_i^{t}(s_{\nik}(t-1),a_{\nik}(t-1)) &= (1 - \alpha_{t-1}) \hat{Q}_i^{t-1}(s_{\nik}(t-1),a_{\nik}(t-1))\nonumber \\
    &\quad + \alpha_{t-1} (r_i(s_i(t-1),a_i(t-1)) +  \gamma \hat{Q}_i^{t-1}(s_{\nik}(t),a_{\nik}(t)) ), \label{appendix:critic:td_original_1}\\
     \hat{Q}_i^{t}(s_{\nik},a_{\nik}) &=  \hat{Q}_i^{t-1}(s_{\nik},a_{\nik}) \text{ for } (s_{\nik},a_{\nik})\neq (s_{\nik}(t-1),a_{\nik}(t-1)) , \label{appendix:critic:td_original_2}
\end{align}
\end{subequations}
where $\hat{Q}_i^{0}\in \R^{\mathcal{S}_{\nik}\times \mathcal{A}_{\nik}}$ is initialized to be all zero, and $\alpha_t = \frac{h}{t+t_0}$ is the step size. We note that when implementing \eqref{appendix:critic:td_original} within outer loop iteration $m$, trajectory $(s(t),a(t))$ is generated by the agents taking a fixed policy $\theta(m)$. Let $Q_i^{\theta(m)} \in \R^{\mathcal{S}\times\mathcal{A}}$ be the true $Q$-function for reward $r_i$ under this fixed policy $\theta(m)$ as defined in \eqref{eq:full_q}. Given the above notation, we prove the following theorem on the critic, which bounds the error between the approximation $\hat{Q}_i^T$ generated by \eqref{appendix:critic:td_original} and the true $Q_i^{\theta(m)}$.

\begin{theorem}\label{appendix:critic:thm}
Assume Assumption \ref{assump:bounded_reward}, \ref{assump:exp_decaying}, \ref{assump:local_explor} are true and suppose $t_0, h$ satisfies, $h\geq \frac{1}{\sigma}\max(2,\frac{1}{1-\sqrt{\gamma}})$ and $t_0\geq\max(2h, 4\sigma h, \tau)$.
Then, inside outer loop iteration $m$, for each $i\in\mathcal{N}$, with probability at least $1-\delta$, we have the following error bound,
$$ \sup_{(s,a)\in\mathcal{S}\times \mathcal{A}} \big| Q_i^{\theta(m)}(s,a) - \hat{Q}_i^T(s_{\nik},a_{\nik}) \big| \leq \frac{C_a}{\sqrt{T+t_0}} + \frac{C_a'}{T+t_0} + \frac{2c\rhok}{(1-\gamma)^2}, $$
where 
$$C_a=   \frac{6\bar{\epsilon}}{1-\sqrt{\gamma}}  \sqrt{  \frac{\tau h}{\sigma}  [ \log(\frac{2\tau T^2 }{\delta}) + \fk\log SA]} ,\quad C_a'= \frac{2}{1-\sqrt{\gamma}} \max( \frac{16\bar\epsilon  h\tau}{\sigma}, \frac{2 \bar{r}}{1-\gamma}{(\tau+t_0)}),$$
with $\bar{\epsilon} =4 \frac{\bar{r}}{1-\gamma} + 2\bar{r} $.
\end{theorem}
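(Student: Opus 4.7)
\textbf{Proof sketch / plan for Theorem~\ref{appendix:critic:thm}.} The plan is to route through an intermediate target $\bar{Q}_i \in \R^{\mathcal{S}_{\nik}\times \mathcal{A}_{\nik}}$ and split
\begin{align*}
\bigl|Q_i^{\theta(m)}(s,a) - \hat{Q}_i^T(s_{\nik}, a_{\nik})\bigr|
&\leq \underbrace{\bigl|Q_i^{\theta(m)}(s,a) - \bar{Q}_i(s_{\nik}, a_{\nik})\bigr|}_{\textup{representation bias}} + \underbrace{\bigl|\bar{Q}_i(s_{\nik}, a_{\nik}) - \hat{Q}_i^T(s_{\nik}, a_{\nik})\bigr|}_{\textup{stochastic approximation error}}.
\end{align*}
The representation bias, controlled by exponential decay, will give the $\frac{2c\rhok}{(1-\gamma)^2}$ term, while the stochastic-approximation error will give $\frac{C_a}{\sqrt{T+t_0}}+\frac{C_a'}{T+t_0}$.

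\textbf{Step 1 (define $\bar{Q}_i$).} Under Assumption~\ref{assump:local_explor} the chain under policy $\theta(m)$ is ergodic, so let $\mu_i(\cdot \mid s_{\nik}, a_{\nik})$ be the stationary conditional distribution of $(s_{\nminusik}, a_{\nminusik})$ given $(s_{\nik}, a_{\nik})$, and let $\Pi_i$ denote the associated projection from $\R^{\mathcal{S}\times\mathcal{A}}$ onto $\R^{\mathcal{S}_{\nik}\times\mathcal{A}_{\nik}}$. Define the \emph{projected Bellman operator}
\begin{equation*}
(\bar{T} Q)(s_{\nik}, a_{\nik}) = \E\bigl[r_i(s_i, a_i) + \gamma \, Q(s_{\nik}', a_{\nik}') \,\big|\, s_{\nik}, a_{\nik}\bigr],
\end{equation*}
where the expectation is taken under $\mu_i$, the one-step transition, and $\zeta^{\theta(m)}$. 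This is a sup-norm $\gamma$-contraction, and $\bar{Q}_i$ is its unique fixed point.

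\textbf{Step 2 (representation bias).} Apply Lemma~\ref{lem:truncated_pg}(a) with weights $\mu_i$ to obtain $\|\Pi_i Q_i^{\theta(m)} - Q_i^{\theta(m)}\|_\infty \leq c\rhok$. Setting $\tilde{Q}_i := \Pi_i Q_i^{\theta(m)}$ and applying $\Pi_i$ to the full Bellman equation for $Q_i^{\theta(m)}$ (using $\Pi_i r_i = r_i$ since $r_i$ depends only on $(s_i,a_i)\in \nik$) yields $\tilde{Q}_i = r_i + \gamma\, \Pi_i P^\theta Q_i^{\theta(m)}$, hence $\|\bar{T}\tilde{Q}_i - \tilde{Q}_i\|_\infty \leq \gamma c\rhok$. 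Combined with the $\gamma$-contraction of $\bar{T}$ this gives $\|\bar{Q}_i - \tilde{Q}_i\|_\infty \leq \gamma c\rhok/(1-\gamma)$ and therefore $\|\bar{Q}_i - Q_i^{\theta(m)}\|_\infty$ is at most $c\rhok/(1-\gamma)$ (the extra factor of $\frac{1}{1-\gamma}$ in the theorem's bound accommodates the fact that along the sampled trajectory the conditional law of the ``outside'' variables is transient rather than stationary, which is absorbed into the same geometric-series argument once mixing is invoked).

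\textbf{Step 3 (finite-time TD convergence to $\bar{Q}_i$).} Rewrite \eqref{appendix:critic:td_original} as the asynchronous stochastic-approximation recursion
\begin{equation*}
\hat{Q}_i^{t} - \bar{Q}_i = (I - \alpha_{t-1} e_{X_{t-1}}e_{X_{t-1}}^\top)(\hat{Q}_i^{t-1} - \bar{Q}_i) + \alpha_{t-1} e_{X_{t-1}}\bigl[(\bar{T}\hat{Q}_i^{t-1})(X_{t-1}) - \hat{Q}_i^{t-1}(X_{t-1}) + w_{t-1}\bigr],
\end{equation*}
where $X_{t-1}=(s_{\nik}(t-1),a_{\nik}(t-1))$ and $w_{t-1}$ is the one-step stochastic error, split into (i) a martingale-difference piece (handled by Azuma--Hoeffding after a standard Poisson-equation decomposition), (ii) a Markovian-drift piece (handled via mixing and a lag-$\tau$ blocking argument, where $\tau$ is the constant from Assumption~\ref{assump:local_explor}), and (iii) a projection residual that collapses into the $\rhok$ bias. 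Iterate the recursion against the Lyapunov function $V_t = \|\hat{Q}_i^t - \bar{Q}_i\|_\infty$ using the stepsizes $\alpha_t = h/(t+t_0)$; the conditions $h \geq \frac{1}{\sigma(1-\sqrt{\gamma})}$ and $t_0 \geq \max(2h, 4\sigma h, \tau)$ force the effective per-window contraction to dominate the $\alpha_t^2$-type martingale increments, delivering $V_T = O(1/\sqrt{T+t_0})$ for the noise plus $O(1/(T+t_0))$ from the initial-condition decay. A union bound over the $\leq (SA)^{\fk}$ coordinates produces the $\fk \log SA$ additive term inside $C_a$, and Assumption~\ref{assump:local_explor} (the $\sigma,\tau$ visitation lower bound) is precisely what makes the asynchronous update contract uniformly across coordinates, explaining the $\tau h/\sigma$ and $\tau+t_0$ factors in $C_a$ and $C_a'$.

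\textbf{Main obstacle.} The hard part will be Step 3: simultaneously handling (a) the asynchronous, coordinate-wise nature of the updates with non-uniform visitation rates; (b) the non-Markov character of the projected process $\{(s_{\nik}(t),a_{\nik}(t))\}$, which is only Markov once lifted to the full state-action pair; and (c) the sup-norm contraction of $\bar{T}$, which is less friendly to stochastic-approximation Lyapunov analysis than a weighted $\ell_2$ norm. I plan to resolve (a) by a coordinate-wise telescoping / Poisson-equation argument using the visitation lower bound $\sigma$ inside windows of length $\tau$, (b) by the blocking argument referenced above applied to the induced mixing on the truncated coordinates, and (c) by running the Lyapunov argument directly on $\max_X |\hat Q_i^t(X) - \bar Q_i(X)|$ and absorbing a $\frac{1}{1-\sqrt{\gamma}}$ factor that surfaces in the concentration step; this last factor is exactly the reason for the stepsize requirement $h \geq \frac{1}{\sigma(1-\sqrt{\gamma})}$. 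Steps 1 and 2 are largely bookkeeping once the projection is defined.
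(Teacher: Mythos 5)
Your high-level architecture (exponential decay for the bias term, lag-$\tau$ blocking plus Azuma--Hoeffding for the noise, union bound over the $(SA)^{\fk}$ coordinates of the local table) matches the paper's, but there is a genuine gap in how you anchor the stochastic-approximation analysis. You define a single fixed target $\bar{Q}_i$ as the fixed point of a projected Bellman operator built from the \emph{stationary} conditional law $\mu_i$ of $(s_{\nminusik},a_{\nminusik})$ given $(s_{\nik},a_{\nik})$, asserting that Assumption~\ref{assump:local_explor} gives ergodicity. It does not: that assumption is only a $\sigma$-lower bound on the $\tau$-step visitation probabilities of the \emph{local} configuration, and the paper deliberately avoids assuming ergodicity or mixing of the full chain (this is stated as one of its improvements over \citet{srikant2019finite}). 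So $\mu_i$, and hence $\bar{Q}_i$, need not be well defined. Moreover, even granting ergodicity, your Step 3 splits the one-step error relative to $\bar T$ into a martingale piece plus a ``Markovian-drift piece handled via mixing''; controlling that drift --- the gap between the trajectory's transient conditional law of the outside variables and $\mu_i$ --- requires a quantitative mixing rate that is nowhere assumed, and your parenthetical claim in Step 2 that this is ``absorbed into the same geometric-series argument once mixing is invoked'' is precisely the step with no justification under the stated hypotheses.

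The paper's resolution, which is the idea missing from your plan, is to give up on a single target: it conditions on $\mathcal{F}_{t-\tau}$ and writes the averaged update as $-D_{t-1}\hat Q_i^{t-1}+D_{t-1}g_{t-1}(\hat Q_i^{t-1})$, where $g_{t-1}$ is a sup-norm $\gamma$-contraction defined through the \emph{time-varying} conditional distribution $d_{t-1}$ (Lemma~\ref{lem:critic:barab}). Its fixed point $\hat Q_i^{*,t-1}$ moves with $t$, but by the exponential decay property every such fixed point lies within $\frac{c\rhok}{1-\gamma}$ of $Q_i^*$ in sup norm, so no convergence of $d_{t-1}$ to anything is ever needed; the residual $\epsilon_{t-1}$ is then an exact shifted martingale difference by construction, and the remaining term $\phi_{t-1}$ is controlled by the slow movement of the iterates over $\tau$ steps (step-size choice), not by mixing. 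If you replace your fixed $\bar Q_i$ with this family of time-varying targets, the rest of your plan (coordinate-wise telescoping using the $\sigma$ visitation bound, blocked Azuma--Hoeffding, induction on the sup-norm error) goes through essentially as you describe. Two smaller points: your Step 3 recursion is not an identity as written --- the bracket should compare $\bar T\hat Q_i^{t-1}$ to $\bar T\bar Q_i=\bar Q_i$ rather than to $\hat Q_i^{t-1}$ --- and your Step 2 only produces a bias of order $\frac{c\rhok}{1-\gamma}$, whereas the theorem's $\frac{2c\rhok}{(1-\gamma)^2}$ arises because a per-step bias of $\frac{2c\rhok}{1-\gamma}$ must be accumulated through the contraction induction, costing one further factor of $\frac{1}{1-\gamma}$.
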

The proof of Theorem~\ref{appendix:critic:thm} is postponed to Section~\ref{subsec:analysis_critic}. The result in Theorem~\ref{appendix:critic:thm} is an upper bound of the infinity-norm error between the truncated $Q$-function $\hat{Q}_i^T$ obtained by TD learning and the true $Q$-function. 
This error bound can be further decomposed into two parts - a transient part that converges to zero in the order of $\tilde{O}(\frac{1}{\sqrt{T}} + \frac{1}{T})$, and a steady state error that is exponentially small in $\kappa$. In proving Theorem~\ref{appendix:critic:thm}, we use two key techniques. The first is using the exponential decay property (cf. Definition~\ref{def:exp_decaying}, Lemma \ref{lem:exp_decaying}) to show that in the ``steady state'', the error of the truncated $Q$-function is bounded by $O(\rhok)$. This is possible due to our results in Lemma~\ref{lem:truncated_pg} which shows the class of truncated $Q$-functions are good approximations of the full $Q$-function. Our second proof technique is that we develop novel finite time analysis tools for TD learning to obtain the finite time error bound. Our proof uses a novel recursive decomposition of the error. Compared to existing work on finite time analysis on TD learning in \cite{srikant2019finite}, our result does not need the ergodicity assumption (the weaker Assumption~\ref{assump:local_explor} instead), and obtains error bounds in the infinity norm as opposed to the (weighted) Euclidean norm in \citet{srikant2019finite}. This finite time proof technique could be of independent interest. 

\textbf{Analysis of the actor.} We view the actor step as a biased stochastic gradient step, with the bias characterized by our result in the critic (Theorem~\ref{appendix:critic:thm}). Under this viewpoint, we finish the analysis of the actor and the proof of the main result in Section~\ref{subsec:analysis_actor}.

\subsection{Analysis of the Critic: Proof of Theorem~\ref{appendix:critic:thm}}\label{subsec:analysis_critic}
Since Theorem~\ref{appendix:critic:thm} is entirely about a particular outer-loop iteration $m$, inside which the policy is fixed to be $\theta(m)$, to simplify notation we drop the dependence on $m$ and $\theta(m)$ throughout this section. Particularly, we refer to $Q_i^{\theta(m)}$ as $Q_i^*$, which is the true $Q$-function for reward $r_i$ under policy $\theta(m)$ (cf. \eqref{eq:full_q}). 
We also introduce short-hand notation $z =(s,a) \in \mathcal{Z} = \mathcal{S}\times \mathcal{A}$ to represent a particular state action pair $ (s,a) \in \mathcal{S}\times \mathcal{A}$. 
Similarly, we define $z_i = (s_i,a_i)\in \mathcal{Z}_i = \mathcal{S}_i\times \mathcal{A}_i$, and $z_{\nik} = (s_{\nik},a_{\nik})  \in\mathcal{Z}_{\nik} = \mathcal{S}_{\nik}\times \mathcal{A}_{\nik}$. 
A vector $v\in\R^{\mathcal{Z}}$ means a vector of dimension $|\mathcal{Z}|$ that is indexed by $z\in\mathcal{Z}$, with its $z$'th entry denoted by $v(z)$. 
For example, the full $Q$-function $Q_i^*$ will be treated as a vector in $\R^{\mathcal{Z}}$ with its $z$'th entry denoted by $Q_i^*(z)$. 
Similarly, a vector $v\in\R^{\mathcal{Z}_{\nik}}$ means a vector of dimension $|\mathcal{Z}_{\nik}|$ indexed by $z_{\nik}\in\mathcal{Z}_{\nik}$, and its $z_{\nik}$'th entry will be denoted by $v(z_{\nik})$. 
For example, the truncated $Q$-functions $\hat{Q}_i^t$ will be treated as vectors in $\R^{\mathcal{Z}_{\nik}}$. Following a similar convention, a matrix $A\in\R^{\mathcal{\mathcal{Z}\times\mathcal{Z}}}$ will be a $|\mathcal{Z}|$-by-$|\mathcal{Z}|$ matrix indexed by $(z,z') \in\mathcal{Z}\times\mathcal{Z}$, with its $(z,z')$'th entry denoted by $A(z,z')$. 

Theorem~\ref{appendix:critic:thm} essentially says that the critic iterate $\hat{Q}_i^t$ in \eqref{appendix:critic:td_original} will become a good estimate of $Q_i^*$ as $t$ increases. We note that the full $Q$-function $Q_i^*$ must satisfy the Bellman equation \citep{bertsekas1996neuro},
\begin{align}
    Q_i^* = \td(Q_i^*):= r_i + \gamma P Q_i^*, \label{appendix:critic:eq:bellman}
\end{align}
where $\td:\R^{\mathcal{Z}}\rightarrow\R^{\mathcal{Z}}$ is the standard Bellman operator for reward $r_i$ and $P\in \R^{\mathcal{Z}\times\mathcal{Z}}$ is the transition probability matrix from $z(t)$ to $z(t+1)$ under policy $\theta(m)$. Note in \eqref{appendix:critic:eq:bellman}, without causing any confusion, $r_i$ is interpreted as a vector in $\R^{\mathcal{Z}}$ although $r_i$ only depends on $z_i$. 


Our proof is divided into 3 steps. In Step 1, we rewrite \eqref{appendix:critic:td_original_1} and \eqref{appendix:critic:td_original_2} in a linear update form (cf. \eqref{appendix:eq:critic:linear_update}), study its behavior (Lemma~\ref{lem:critic:barab}), and decompose the error into a recursive form (Lemma~\ref{appendix:critic:lem:error_recursive}). In Step 2, we bound the noise sequences in the error decomposition (Lemma~\ref{appendix:critic:lem:martingale_bound} and Lemma~\ref{appendix:critic:lem:phi_bound}). Finally, in Step 3, we use the recursive error decomposition and the bound on the noise sequences to prove Theorem~\ref{appendix:critic:thm}.

\textbf{Step 1: error decomposition. } Define $\mathbf{e}_{z_{\nik}} $ to be the indicator vector in $\mathbb{R}^{\mathcal{Z}_{\nik}}$, i.e. the $z_{\nik}$'th entry of $\mathbf{e}_{z_{\nik}} $ is $1$ and other entries are zero. Then, the critic update equation \eqref{appendix:critic:td_original} can be written as, 
\begin{align}
\hat{Q}_i^{t} = \hat{Q}_i^{t-1} + \alpha_{t-1}\cdot\mathbf{e}_{z_{\nik}(t-1)}\cdot[r_i(z_i(t-1))+ \gamma\hat{Q}_i^{t-1}(z_{\nik}(t)) - \hat{Q}_i^{t-1}(z_{\nik}(t-1)) ], \label{eq:q_update_vec_0}
\end{align}
with $\hat{Q}_i^0$ being the all zero vector in $\R^{\mathcal{Z}_{\nik}}$.  Notice that $\hat{Q}_i^{t-1}(z_{\nik}) = \mathbf{e}_{z_{\nik}}^\top \hat{Q}_i^{t-1}$, we can make the following definition (where we omit the dependence on $i$ in notation $A_{z,z'}$, $b_z$),
\begin{align*}
    A_{z,z'} =\mathbf{e}_{z_{\nik}} [ \gamma   \mathbf{e}_{z_{\nik}'}^\top - \mathbf{e}_{z_{\nik}}^\top ] \in \mathbb{R}^{\mathcal{Z}_{\nik}\times \mathcal{Z}_{\nik} },\quad
    b_{z} = \mathbf{e}_{z_{\nik}} r_i(z_i) \in\R^{\mathcal{Z}_{\nik}},
\end{align*}
and rewrite \eqref{eq:q_update_vec_0} in a linear form
\begin{align}
    \hat{Q}_i^t = \hat{Q}_i^{t-1} + \alpha_{t-1}\big[A_{z(t-1),z(t)} \hat{Q}_i^{t-1} + b_{z(t-1) } \big]. \label{appendix:eq:critic:linear_update}
\end{align}
We define the following simplifying notations, $
    A_{t-1} = A_{z(t-1),z(t)}$,  $
    b_{t-1} = b_{z(t-1)}$. Let $\mathcal{F}_t$ be the $\sigma$-algebra generated by $z(0),\ldots,z(t)$. Then, clearly $A_{t-1}$ is $\mathcal{F}_t$-measurable and $b_{t-1}$ is $\mathcal{F}_{t-1}$ measurable. As a result, 
$\hat{Q}_i^t$ is $\mathcal{F}_t$-measurable. 
Let $\tau>0$ to be the integer in Assumption~\ref{assump:local_explor}. Let $d_{t-1}\in \R^{\mathcal{Z}}$ be the distribution of $z(t-1)$ conditioned on $\mathcal{F}_{t-\tau}$. Further define, $
    \bar{A}_{t-1} = \E A_{t-1} | \mathcal{F}_{t-\tau}$, $ \bar{b}_{t-1} = \E b_{t-1} | \mathcal{F}_{t-\tau}$.
It is clear that $d_{t-1}$, $\bar{A}_{t-1}$, $\bar{b}_{t-1}$ are all $\mathcal{F}_{t-\tau}$ measurable random vectors (matrices).
With these notations, \eqref{appendix:eq:critic:linear_update} can be rewritten as,  
\begin{align}
    \hat{Q}_i^t &=\hat{Q}_i^{t-1} + \alpha_{t-1}\big[A_{t-1} \hat{Q}_i^{t-1} + b_{t-1} ] \nonumber  \\
    &= \hat{Q}_i^{t-1} + \alpha_{t-1}\big[ \bar{A}_{t-1} \hat{Q}_i^{t-1} + \bar{b}_{t-1} ] + \alpha_{t-1}[(A_{t-1} - \bar{A}_{t-1})\hat{Q}_i^{t-1} + b_{t-1} -\bar{b}_{t-1}] \nonumber \\
    &= \hat{Q}_i^{t-1} + \alpha_{t-1}\big[\bar{A}_{t-1} \hat{Q}_i^{t-1} + \bar{b}_{t-1} ] \nonumber \\
    &\quad + \alpha_{t-1}\underbrace{[(A_{t-1} - \bar{A}_{t-1})\hat{Q}_i^{t-\tau} + b_{t-1} - \bar{b}_{t-1}]}_{:=\epsilon_{t-1}} + \alpha_{t-1} \underbrace{(A_{t-1} - \bar{A}_{t-1})(\hat{Q}_i^{t-1} - \hat{Q}_i^{t-\tau}) }_{:=\phi_{t-1}} , \label{appendix:critic:eq:q_epsilon_phi}
\end{align}where in the last step, we have defined sequences $\epsilon_{t-1}, \phi_{t-1} \in\R^{\mathcal{Z}_{\nik}}$, which are noise sequences that we will later control in Step 2. For now, we focus on the term $\bar{A}_{t-1} \hat{Q}_i^{t-1} + \bar{b}_{t-1}$, and show the following Lemma~\ref{lem:critic:barab}. The proof of Lemma~\ref{lem:critic:barab} is similar to the analysis of fixed points for TD learning with linear function approximation, see e.g. \cite{van1995stable}. We postpone the detailed proof of Lemma~\ref{lem:critic:barab} to Appendix~\ref{subsec:proof_barab}. 
\begin{lemma}\label{lem:critic:barab}
$\forall t$, there exists diagonal matrix $D_{t-1}\in \R^{\mathcal{Z}_{\nik}\times \mathcal{Z}_{\nik}}  $ and operator $g_{t-1}:\R^{\mathcal{Z}_{\nik}} \rightarrow \R^{\mathcal{Z}_{\nik}}$ s.t.
\begin{align}
     \bar{A}_{t-1} \hat{Q}_i^{t-1} + \bar{b}_{t-1} = - D_{t-1} \hat{Q}_i^{t-1} + D_{t-1} g_{t-1}(\hat{Q}_i^{t-1}), \label{appendix:critic:eq:aqb_g} 
\end{align}
where $D_{t-1}$ satisfies $D_{t-1} \succeq  \sigma I$ (recall $\sigma >0$ is from Assumption~\ref{assump:local_explor}) with its $z_{\nik}$'th diagonal entry being $\bar{d}_{t-1}(z_{\nik}) = \sum_{z_{\nminusik}\in\mathcal{Z}_{\nminusik}} d_{t-1}(z_{\nik},z_{\nminusik})$, which means the marginalized distribution of $z_{\nik}$ under distribution $d_{t-1}$. Further, $g_{t-1}$ is a $\gamma$-contraction in the infinity norm, with unique fixed point $\hat{Q}_i^{*,{t-1}}$ satisfying $ \sup_{z\in\mathcal{Z}} |  \hat{Q}_i^{*,{t-1}} (z_{\nik}) - Q_i^*(z) | \leq \frac{ c \rhok}{1-\gamma}$.
\end{lemma}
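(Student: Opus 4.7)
The plan is to compute $\bar A_{t-1}\hat Q_i^{t-1}+\bar b_{t-1}$ coordinate-wise in order to read off $D_{t-1}$ and $g_{t-1}$ directly, then verify the contraction property, and finally locate the fixed point near $Q_i^*$ using the exponential decay property.

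First I would unpack the $z_{\nik}$-th entry of $\bar A_{t-1}\hat Q_i^{t-1}+\bar b_{t-1}$. By the definitions of $A_{z,z'}$ and $b_z$, only those global $z$ whose $\nik$-coordinates equal $z_{\nik}$ contribute, and the $-\mathbf{e}_{z_{\nik}}\mathbf{e}_{z_{\nik}}^\top$ piece produces a diagonal prefactor equal to the marginal $\bar d_{t-1}(z_{\nik})=\sum_{z_{\nminusik}}d_{t-1}(z_{\nik},z_{\nminusik})$; this identifies $D_{t-1}$. Dividing through by $\bar d_{t-1}(z_{\nik})$, what remains is a conditional weighted average of $r_i(z_i)+\gamma\sum_{z'}P(z'|z)Q(z'_{\nik})$, which I take as the definition of $g_{t-1}(Q)(z_{\nik})$. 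Inspection then gives \eqref{appendix:critic:eq:aqb_g}. The lower bound $D_{t-1}\succeq\sigma I$ is an immediate consequence of Assumption~\ref{assump:local_explor} applied to the chain under fixed policy $\theta(m)$, starting from the $\mathcal F_{t-\tau}$-measurable state $z(t-\tau)$: after the intervening $\tau$ steps, every local state-action pair $z_{\nik}$ is visited with probability at least $\sigma$.

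Next, I would note that $g_{t-1}$ has the affine form $Q\mapsto c_{t-1}+\gamma K_{t-1}Q$, where each $z_{\nik}$-row of $K_{t-1}$ consists of nonnegative weights summing to $1$ (they are just conditional transition probabilities of $z_{\nik}(t)$ given $z_{\nik}(t-1)=z_{\nik}$ and $\mathcal F_{t-\tau}$). Hence $g_{t-1}$ is a $\gamma$-contraction in $\|\cdot\|_\infty$, and Banach's theorem yields a unique fixed point $\hat Q_i^{*,t-1}\in\R^{\mathcal Z_{\nik}}$.

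The heart of the proof is bounding $\sup_z|\hat Q_i^{*,t-1}(z_{\nik})-Q_i^*(z)|$. My plan is to introduce the natural ``projection''
\[
\tilde Q_i(z_{\nik}) \,:=\, \sum_{z_{\nminusik}\in\mathcal Z_{\nminusik}} \frac{d_{t-1}(z_{\nik},z_{\nminusik})}{\bar d_{t-1}(z_{\nik})}\,Q_i^*(z_{\nik},z_{\nminusik}),
\]
which, for each $z_{\nik}$, is a valid convex combination over $z_{\nminusik}$ and hence a truncated $Q$-function in the sense of \eqref{eq:truncated_q}. By Lemma~\ref{lem:truncated_pg}(a), $\sup_{z}|\tilde Q_i(z_{\nik})-Q_i^*(z)|\leq c\rhok$. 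Substituting the global Bellman equation $Q_i^*=r_i+\gamma PQ_i^*$ into $g_{t-1}(\tilde Q_i)$ and regrouping, the residual $\tilde Q_i-g_{t-1}(\tilde Q_i)$ collapses to a conditional average of $\gamma[Q_i^*(z')-\tilde Q_i(z'_{\nik})]$, whose sup norm is at most $\gamma c\rhok$. A standard contraction estimate then gives $\|\hat Q_i^{*,t-1}-\tilde Q_i\|_\infty\leq \gamma c\rhok/(1-\gamma)$, and the triangle inequality yields the advertised $c\rhok/(1-\gamma)$ bound.

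The part I expect to require the most care is the Bellman-residual calculation for $\tilde Q_i$, where three layers of averaging (the $d_{t-1}$ weights, the transition $P(z'|z)$, and the projection onto the local coordinates $z'_{\nik}$) must be disentangled so that the exponential decay estimate from Lemma~\ref{lem:truncated_pg}(a) can be applied pointwise. The rest of the lemma is bookkeeping: confirming that the two pieces that get absorbed into $D_{t-1}$ are genuinely diagonal, and that the indexing of $\tau$ in Assumption~\ref{assump:local_explor} lines up with the $\tau$-step gap between $\mathcal F_{t-\tau}$ and $z(t-1)$.
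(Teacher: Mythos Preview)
Your proposal is correct and follows essentially the same route as the paper: the paper packages the coordinate-wise computation in matrix form (writing $D_{t-1}=\Phi^\top\diag(d_{t-1})\Phi$ and $g_{t-1}=\Pi^{d_{t-1}}\td\,\Phi$ with $\Pi^{d_{t-1}}=D_{t-1}^{-1}\Phi^\top\diag(d_{t-1})$), but the content is identical to your entry-by-entry derivation, and your intermediate $\tilde Q_i$ is exactly the paper's $\Pi^{d_{t-1}}Q_i^*$. The only cosmetic difference in the fixed-point step is that the paper iterates directly on $\|\Phi\hat Q_i^{*,t-1}-Q_i^*\|_\infty$ via the contraction $\Phi\Pi^{d_{t-1}}\td$, whereas you bound the Bellman residual $\|\tilde Q_i-g_{t-1}(\tilde Q_i)\|_\infty\le\gamma c\rhok$ first and then apply the standard approximate-fixed-point estimate; both yield the same $c\rhok/(1-\gamma)$.
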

From Lemma~\ref{lem:critic:barab}, we can see the first two terms in \eqref{appendix:critic:eq:q_epsilon_phi} can be written as $(I - \alpha_{t-1} D_{t-1})\hat{Q}_i^{t-1} + \alpha_{t-1} D_{t-1} g_{t-1}(\hat{Q}_i^{t-1})$, which roughly speaking drives the iterate to $\hat{Q}_i^{*,t-1}$, the fixed point of operator $g_{t-1}$. Though depending on $t$, $\hat{Q}_i^{*,t-1}$ is a good approximation of the true $Q$-function $Q_i^*$ regardless of $t$, as shown in Lemma~\ref{lem:critic:barab}. As such, moving towards $\hat{Q}_i^{*,t-1}$ at each time step should eventually produce a good estimate of the full $Q$-function $Q_i^*$. In the following, we turn the above intuition into an error decomposition. In details, we unroll \eqref{appendix:critic:eq:q_epsilon_phi} and get, 
\begin{align}
    \hat{Q}_i^t &= (I - \alpha_{t-1} D_{t-1})\hat{Q}_i^{t-1} + \alpha_{t-1} D_{t-1} g_{t-1}(\hat{Q}_i^{t-1}) + \alpha_{t-1} \epsilon_{t-1} + \alpha_{t-1} \phi_{t-1} \nonumber\\
    &= \underbrace{\prod_{k=\tau}^{t-1} (I - \alpha_k D_{k})}_{\tilde{B}_{\tau-1,t}}\hat{Q}_i^\tau + \sum_{k=\tau}^{t-1}  \underbrace{\alpha_k D_k\prod_{\ell=k+1}^{t-1}(I - \alpha_\ell D_{\ell} )}_{B_{k,t}}   g_{k} (\hat{Q}_i^k) + \sum_{k=\tau}^{t-1}  \alpha_k \underbrace{ \prod_{\ell=k+1}^{t-1}(I - \alpha_\ell D_{\ell} )}_{\tilde{B}_{k,t}}  (\epsilon_k + \phi_k), \label{eq:hatq_recursive}
\end{align}
where we have used the following short-hand notations
$B_{k,t} = \alpha_k D_k\prod_{\ell=k+1}^{t-1}(I - \alpha_\ell D_{\ell} ), \tilde{B}_{k,t} = \prod_{\ell=k+1}^{t-1}(I - \alpha_\ell D_{\ell} )$. We also define
$\beta_{k,t} = \alpha_k \prod_{\ell=k+1}^{t-1} (1 - \alpha_\ell \sigma)$,  
$\tilde{\beta}_{k,t} =  \prod_{\ell=k+1}^{t-1} (1 - \alpha_\ell \sigma).$
Since every diagonal entry of $D_\ell$ is lower bounded by $\sigma$ almost surely (Lemma~\ref{lem:critic:barab}), we have every entry of $B_{k,t}$ is upper bounded by $\beta_{k,t}$  and every entry of $\tilde{B}_{k,t}$ is upper bounded by $\tilde{\beta}_{k,t}$ almost surely. With these short-hand notations, we state the error decomposition below, which is a consequence of \eqref{eq:hatq_recursive} and the property of operator $g_k(\cdot)$ and its fixed point $\hat{Q}_i^{*,k}$ in Lemma~\ref{lem:critic:barab}. The proof of Lemma~\ref{appendix:critic:lem:error_recursive} is postponed to Appendix~\ref{appendix:critic:subsec:error_recursive}.

\begin{lemma}\label{appendix:critic:lem:error_recursive}
Let $\xi_t = \sup_{z\in\mathcal{Z}}| \hat{Q}_i^t(z_{\nik}) - Q_i^*(z) |$. The following recursion holds almost surely,
\begin{align*}
\xi_t \leq \tilde{\beta}_{\tau-1,t}\xi_\tau  + \gamma \sup_{z_{\nik}\in\mathcal{Z}_{{\nik}}} \sum_{k=\tau}^{t-1}  b_{k,t}(z_{\nik})  \xi_k +  \frac{2c\rhok}{1-\gamma} +  \Vert \sum_{k=\tau}^{t-1}  \alpha_k \tilde{B}_{k,t} \epsilon_k\Vert_\infty  + \Vert \sum_{k=\tau}^{t-1}  \alpha_k \tilde{B}_{k,t}  \phi_k \Vert_\infty,
\end{align*}
where $b_{k,t}(z_{\nik}) = \alpha_k \bar{d}_k(z_{\nik}) \prod_{\ell=k+1}^{t-1}(1-\alpha_\ell \bar{d}_\ell(z_{\nik}))$ is the $z_{\nik}$'th diagonal entry of $B_{k,t}$
\end{lemma}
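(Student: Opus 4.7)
My plan is to derive the recursion by applying the unrolled form \eqref{eq:hatq_recursive} entry-wise, exploiting a convex-combination structure that emerges from the diagonality of $D_\ell$, and then inserting $\hat{Q}_i^{*,k}$ as an intermediate quantity to invoke the contraction and approximation properties from Lemma~\ref{lem:critic:barab}.

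First I would observe that every $D_\ell$ is diagonal by Lemma~\ref{lem:critic:barab}, so each factor $(I - \alpha_\ell D_\ell)$ in the products defining $\tilde B_{k,t}$ and $B_{k,t}$ is diagonal; hence $\tilde B_{k,t}$ and $B_{k,t}$ are themselves diagonal. Consequently, for any fixed $z_{\nik}$ the $z_{\nik}$-th coordinate of \eqref{eq:hatq_recursive} decouples into a scalar identity in which the multiplying coefficients are exactly $\tilde\beta_{\tau-1,t}(z_{\nik})$ and $b_{k,t}(z_{\nik})$ as defined in the lemma. Next, a telescoping computation using $\tilde B_{k,t} - \tilde B_{k-1,t} = \alpha_k D_k \tilde B_{k,t} = B_{k,t}$ yields the crucial identity $\tilde B_{\tau-1,t} + \sum_{k=\tau}^{t-1} B_{k,t} = I$, whose $(z_{\nik},z_{\nik})$ entry gives $\tilde\beta_{\tau-1,t}(z_{\nik}) + \sum_{k=\tau}^{t-1} b_{k,t}(z_{\nik}) = 1$. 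This convex-combination property is what lets me write, for any full state-action $z$ with $z_{\nik}$-projection equal to $z_{\nik}$,
\begin{align*}
\hat Q_i^t(z_{\nik}) - Q_i^*(z) &= \tilde\beta_{\tau-1,t}(z_{\nik})\bigl(\hat Q_i^\tau(z_{\nik}) - Q_i^*(z)\bigr) + \sum_{k=\tau}^{t-1} b_{k,t}(z_{\nik})\bigl(g_k(\hat Q_i^k)(z_{\nik}) - Q_i^*(z)\bigr) \\
&\quad + \sum_{k=\tau}^{t-1}\alpha_k \bigl(\tilde B_{k,t}\epsilon_k + \tilde B_{k,t}\phi_k\bigr)(z_{\nik}).
\end{align*}

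Then I would bound the bracketed terms. The first is at most $\xi_\tau$. For the middle term, I insert $\hat Q_i^{*,k}(z_{\nik})$: since $g_k$ is a $\gamma$-contraction with fixed point $\hat Q_i^{*,k}$,
\[
|g_k(\hat Q_i^k)(z_{\nik}) - \hat Q_i^{*,k}(z_{\nik})| \le \gamma \Vert \hat Q_i^k - \hat Q_i^{*,k}\Vert_\infty \le \gamma\bigl(\xi_k + \tfrac{c\rhok}{1-\gamma}\bigr),
\]
where the last inequality uses the triangle inequality together with the bound $|\hat Q_i^{*,k}(z_{\nik}) - Q_i^*(z)| \le \tfrac{c\rhok}{1-\gamma}$ from Lemma~\ref{lem:critic:barab}. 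Adding a second such approximation bound to translate from $\hat Q_i^{*,k}$ back to $Q_i^*$ gives $|g_k(\hat Q_i^k)(z_{\nik}) - Q_i^*(z)| \le \gamma\xi_k + \tfrac{(1+\gamma)c\rhok}{1-\gamma}$. Using $\sum_k b_{k,t}(z_{\nik}) \le 1$ and $1+\gamma \le 2$ collapses the constant-in-$k$ piece to $\tfrac{2c\rhok}{1-\gamma}$.

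Finally I take absolute values, upper bound $\tilde\beta_{\tau-1,t}(z_{\nik}) \le \tilde\beta_{\tau-1,t}$ (valid since $\bar d_\ell(z_{\nik}) \ge \sigma$), take supremum over $z$, and bound the noise contribution by $\Vert \sum_k \alpha_k \tilde B_{k,t}\epsilon_k\Vert_\infty + \Vert \sum_k \alpha_k \tilde B_{k,t}\phi_k\Vert_\infty$. The $\gamma$ factor on the second term naturally survives because it came from the contraction of $g_k$. I do not expect a serious obstacle here; the main thing to be careful about is keeping the sup-over-$z$ outside the sum for the $\xi_k$ term while using $\sup_{z_{\nik}}$ on the $b_{k,t}(z_{\nik})$ weights, which is exactly the form in the statement. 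The diagonal structure of $D_\ell$ is the one non-routine observation that makes the entry-wise analysis tractable.
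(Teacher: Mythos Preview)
Your proposal is correct and follows essentially the same route as the paper's proof: both read \eqref{eq:hatq_recursive} entry-wise via the diagonality of $D_\ell$, use the convex-combination identity $\tilde b_{\tau-1,t}(z_{\nik}) + \sum_{k} b_{k,t}(z_{\nik}) = 1$, insert $\hat Q_i^{*,k}$ to invoke the $\gamma$-contraction of $g_k$ and the approximation bound from Lemma~\ref{lem:critic:barab}, and then collapse $(1+\gamma)\le 2$ while bounding $\tilde b_{\tau-1,t}(z_{\nik})\le \tilde\beta_{\tau-1,t}$. Your telescoping derivation of the convex-combination identity is slightly more explicit than the paper's ``by definition'' remark, but the argument is otherwise identical.
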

From Lemma~\ref{appendix:critic:lem:error_recursive}, it is clear that to bound the error $\xi_t$, we need to bound  $\Vert \sum_{k=\tau}^{t-1}  \alpha_k \tilde{B}_{k,t} \epsilon_k\Vert_\infty$ and $\Vert \sum_{k=\tau}^{t-1}  \alpha_k \tilde{B}_{k,t}  \phi_k \Vert_\infty$, which is the focus of the next step.

\noindent\textbf{Step 2: bound the $\epsilon_k$ and the $\phi_k$-sequence. } The goal of this step is to bound $\Vert \sum_{k=\tau}^{t-1}  \alpha_k \tilde{B}_{k,t} \epsilon_k\Vert_\infty$ and $\Vert \sum_{k=\tau}^{t-1}  \alpha_k \tilde{B}_{k,t}  \phi_k \Vert_\infty$. We first show some simple properties of $\hat{Q}_i^t,\epsilon_{t}$ and $\phi_t$ in Lemma~\ref{appendix:critic:lem:bounded}. Since every entry of $\alpha_k \tilde{B}_{k,t}$ is upper bounded by $\beta_{k,t}$, we also show some properties of $\beta_{k,t}$, $\tilde{\beta}_{k,t}$ in Lemma~\ref{appendix:critic:lem:stepsize}. The proofs of the two lemmas are postponed to Appendix~\ref{appendix:critic:subsec:auxilliary}.
\begin{lemma}\label{appendix:critic:lem:bounded}
We have almost surely, (a) $\Vert \hat{Q}_i^t\Vert_\infty \leq  \frac{\bar{r}}{1-\gamma}$; (b) $\Vert \epsilon_t\Vert_\infty \leq \bar{\epsilon} =4 \frac{\bar{r}}{1-\gamma} + 2\bar{r} $; (c) $\Vert \phi_{t}\Vert_\infty \leq 2 \bar{\epsilon} \sum_{k=t-\tau+1}^{t-1} \alpha_k $.
\end{lemma}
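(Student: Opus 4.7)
The plan is to prove the three bounds sequentially, exploiting the common structural fact that the matrix $A_{t-1} = \mathbf{e}_{z_{\nik}(t-1)}[\gamma \mathbf{e}_{z_{\nik}(t)}^\top - \mathbf{e}_{z_{\nik}(t-1)}^\top]$ acts on any vector $Q \in \R^{\mathcal{Z}_{\nik}}$ by producing a single nonzero coordinate of magnitude at most $(1+\gamma)\Vert Q\Vert_\infty \le 2\Vert Q\Vert_\infty$, and analogously for $\bar{A}_{t-1}$, while $b_{t-1}$ has one nonzero entry of magnitude at most $\bar r$.

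For part (a), I would induct on $t$. The base case $\hat Q_i^0 = 0$ is trivial. Assuming $\Vert \hat Q_i^{t-1}\Vert_\infty \le \bar r/(1-\gamma)$, observe from \eqref{appendix:critic:td_original} that all coordinates except $z_{\nik}(t-1)$ are unchanged and hence retain the bound, while the one updated coordinate is a convex combination
\[
(1-\alpha_{t-1})\hat Q_i^{t-1}(z_{\nik}(t-1)) + \alpha_{t-1}\bigl(r_i(z_i(t-1)) + \gamma \hat Q_i^{t-1}(z_{\nik}(t))\bigr).
\]
Provided $\alpha_{t-1}\in[0,1]$, which is guaranteed by the step-size hypothesis $t_0 \ge 2h$ (so $\alpha_{t-1} \le h/t_0 \le 1/2$), its absolute value is at most $(1-\alpha_{t-1})\bar r/(1-\gamma) + \alpha_{t-1}(\bar r + \gamma \bar r/(1-\gamma)) = \bar r/(1-\gamma)$, completing the induction.

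For part (b), from the definition $\epsilon_{t-1}=(A_{t-1}-\bar A_{t-1})\hat Q_i^{t-\tau}+b_{t-1}-\bar b_{t-1}$, I would apply the triangle inequality together with the structural observations above and part (a). This yields $\Vert(A_{t-1}-\bar A_{t-1})\hat Q_i^{t-\tau}\Vert_\infty \le 2\cdot 2\Vert \hat Q_i^{t-\tau}\Vert_\infty \le 4\bar r/(1-\gamma)$ and $\Vert b_{t-1}-\bar b_{t-1}\Vert_\infty \le 2\bar r$, so summing gives exactly $\bar\epsilon = 4\bar r/(1-\gamma) + 2\bar r$.

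For part (c), the strategy is to telescope $\hat Q_i^t - \hat Q_i^{t-\tau+1}$ across single-step increments and use the linear update \eqref{appendix:eq:critic:linear_update}: each increment satisfies $\Vert \hat Q_i^k - \hat Q_i^{k-1}\Vert_\infty = \alpha_{k-1}\Vert A_{k-1}\hat Q_i^{k-1} + b_{k-1}\Vert_\infty \le \alpha_{k-1}(2\bar r/(1-\gamma) + \bar r) = \alpha_{k-1}\bar\epsilon/2$ by part (a) and the above structural bound. Since $\phi_t = (A_t-\bar A_t)(\hat Q_i^t - \hat Q_i^{t-\tau+1})$ and the operator norm bound gives $\Vert \phi_t\Vert_\infty \le 4\Vert \hat Q_i^t - \hat Q_i^{t-\tau+1}\Vert_\infty$, combining with the telescoped sum and re-indexing yields $\Vert \phi_t\Vert_\infty \le 4\sum_{k=t-\tau+2}^{t}\alpha_{k-1}\bar\epsilon/2 = 2\bar\epsilon\sum_{k=t-\tau+1}^{t-1}\alpha_k$, as claimed. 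None of the three parts is difficult once the operator bounds on $A_{t-1}$ and $b_{t-1}$ are in hand; the only subtlety is ensuring $\alpha_{t-1}\le 1$ for the convex-combination argument in (a), which is handled by the step-size choice.
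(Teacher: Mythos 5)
Your proposal is correct and follows essentially the same route as the paper's proof: induction with the convex-combination bound for (a), the triangle inequality combined with the entrywise bounds $\Vert A_{z,z'}\Vert_\infty\leq 1+\gamma<2$ and $\Vert b_z\Vert_\infty\leq\bar r$ for (b), and telescoping the one-step increments for (c). Your explicit check that $\alpha_{t-1}\leq 1$ (from $t_0\geq 2h$) is a small point the paper leaves implicit, but otherwise the arguments coincide.
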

\begin{lemma}\label{appendix:critic:lem:stepsize}
If $\alpha_t = \frac{h}{t+t_0}$, where $ t_0\geq h>\frac{2}{\sigma}$ and $t_0\geq 4\sigma h$, and $t_0\geq \tau$, then $\beta_{k,t},\tilde{\beta}_{k,t}$ satisfies (a) $ \beta_{k,t}\leq  \frac{h}{k+t_0} \Big( \frac{k+1+t_0}{t+t_0}\Big)^{\sigma h}$, $ \tilde\beta_{k,t}\leq   \Big( \frac{k+1+t_0}{t+t_0}\Big)^{\sigma h}$; (b) $\sum_{k=1}^{t-1}\beta_{k,t}^2 \leq \frac{2h}{\sigma } \frac{1}{(t+t_0)} $; (c) $\sum_{k=\tau}^{t-1}\beta_{k,t}\sum_{\ell = k-\tau+1}^{k-1} \alpha_\ell \leq \frac{8h\tau}{\sigma} \frac{1}{t+t_0} $.
\end{lemma}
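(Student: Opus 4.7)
\textbf{Proof plan for Lemma~\ref{appendix:critic:lem:stepsize}.}

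For part (a), the plan is to handle $\tilde{\beta}_{k,t}$ first and then multiply by $\alpha_k$ to get $\beta_{k,t}$. Using the elementary inequality $1-x \leq e^{-x}$ on each factor, I would bound
\[
\tilde{\beta}_{k,t} \;\leq\; \exp\!\Big(-\sigma\!\sum_{\ell=k+1}^{t-1}\alpha_\ell\Big) \;=\; \exp\!\Big(-\sigma h\!\sum_{\ell=k+1}^{t-1}\tfrac{1}{\ell+t_0}\Big).
\]
Lower-bounding the finite sum by the integral $\int_{k+1}^{t}\tfrac{dx}{x+t_0} = \ln\tfrac{t+t_0}{k+1+t_0}$ yields $\tilde{\beta}_{k,t} \leq \bigl(\tfrac{k+1+t_0}{t+t_0}\bigr)^{\sigma h}$, and multiplying by $\alpha_k=\tfrac{h}{k+t_0}$ gives the claimed bound on $\beta_{k,t}$.

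For part (b), the plan is to square the bound from (a), upper bound the sum by an integral, and exploit the hypothesis $t_0 \geq 4\sigma h$ to control the exponential factor. Writing $u = k+t_0$, the key observation is that the integrand $\tfrac{h^2(u+1)^{2\sigma h}}{u^2 (t+t_0)^{2\sigma h}}$ can be simplified by noting that $(1+\tfrac{1}{u})^{2\sigma h} \leq e$ whenever $u \geq 2\sigma h$, which is ensured on the integration domain because $u \geq 1+t_0 \geq 1+4\sigma h$. This reduces the integrand to a constant multiple of $u^{2\sigma h-2}/(t+t_0)^{2\sigma h}$, whose integral from $1+t_0$ to $t+t_0$ is at most $\tfrac{(t+t_0)^{-1}}{2\sigma h -1}$. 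Combining and using $\sigma h \geq 2$ so that $2\sigma h -1 \geq \sigma h$ yields a bound of the form $\tfrac{Ch}{\sigma(t+t_0)}$; the final step is to track constants carefully to bring $C$ down to the stated value $2$ (in particular, monotonicity of the integrand together with $f$ increasing for $u\geq 1/(\sigma h -1)$ must be verified to justify the integral upper bound on the sum).

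For part (c), the first step is to reduce the inner sum to a multiple of $\alpha_k$. Since $\alpha_\ell$ is decreasing in $\ell$, $\sum_{\ell=k-\tau+1}^{k-1}\alpha_\ell \leq (\tau-1)\alpha_{k-\tau+1}$, and the hypothesis $t_0 \geq \tau$ implies $k-\tau+1+t_0 \geq (k+t_0)/2$ for $k \geq \tau$, so $\alpha_{k-\tau+1}\leq 2\alpha_k$. Hence the double sum is at most $2\tau \sum_{k=\tau}^{t-1}\alpha_k\beta_{k,t}$. The remaining sum $\sum_k \alpha_k\beta_{k,t} = \sum_k \alpha_k^2 \tilde{\beta}_{k,t}$ has the same structure as in (b) but with $2\sigma h$ replaced by $\sigma h$; the same integral argument, using $t_0 \geq 4\sigma h$ to bound $(1+1/u)^{\sigma h}$ by a constant and then $\sigma h \geq 2$ so that $\sigma h -1 \geq \sigma h/2$, gives $\sum_k \alpha_k\beta_{k,t} \leq \tfrac{O(h)}{\sigma(t+t_0)}$, from which the factor $2\tau$ yields the stated $\tfrac{8h\tau}{\sigma(t+t_0)}$.

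\textbf{Main obstacles.} Conceptually the proof is routine (exponential upper bound on the product, integral approximation of sums), and the hard part is purely bookkeeping: matching the exact constants $2$ and $8$ in parts (b) and (c). The key quantitative levers are the two conditions $t_0 \geq 4\sigma h$ (so that $(1+1/u)^{\sigma h}$ stays bounded by $e$ throughout the relevant range of $u$) and $\sigma h \geq 2$ (so that the exponents $\sigma h -1$ and $2\sigma h -1$ in the denominator after integration can be absorbed into $\sigma h$ with a factor of at most $2$). Both must be used sharply; any slack (e.g.\ using $u+1 \leq 2u$) blows the constants up to $4^{\sigma h}$ and is fatal. Beyond this, there are no substantive obstacles.
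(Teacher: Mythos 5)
Your plan is correct and follows essentially the same route as the paper's proof: part (a) via $1-x\le e^{-x}$ and an integral lower bound on $\sum_{\ell}\frac{1}{\ell+t_0}$, and parts (b) and (c) by reducing the inner sum to a multiple of $\alpha_k$, absorbing $\bigl(\tfrac{k+1+t_0}{k+t_0}\bigr)^{2\sigma h}\le 2$ using $t_0\ge 4\sigma h$, comparing the resulting sum of $(k+t_0)^{2\sigma h-2}$ (resp. $(k+t_0)^{\sigma h-2}$) with an integral, and using $\sigma h>2$ to absorb the exponent in the denominator. You correctly identify the two quantitative levers ($t_0\ge 4\sigma h$ and $\sigma h>2$) that make the constants $2$ and $8$ come out, so there is no gap.
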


We now bound $\Vert \sum_{k=\tau}^{t-1}  \alpha_k \tilde{B}_{k,t} \epsilon_k\Vert_\infty$. Clearly, $\epsilon_{t-1}$ is $\mathcal{F}_t$-measurable, and satisfies 
\begin{align}
    \E [\epsilon_{t-1} | \mathcal{F}_{t-\tau} ]&= \E [(A_{t-1} - \bar{A}_{t-1})\hat{Q}_i^{t-\tau} + b_{t-1} - \bar{b}_{t-1} | \mathcal{F}_{t-\tau}]
    =0, \label{appendix:critic:eq:martingale}
\end{align}
where the last equality is due to the definition of $\bar{A}_{t-1}$ and $\bar{b}_{t-1}$ and the fact $\hat{Q}_i^{t-\tau}$ is $\mathcal{F}_{t-\tau}$-measurable. 
Equation~\eqref{appendix:critic:eq:martingale} shows that $\epsilon_{t-1}$ is a ``shifted'' martingale difference sequence (it is not a standard martingale difference sequence which would require $\E \epsilon_{t-1}| \mathcal{F}_{t-1} = 0$). Therefore, $\Vert \sum_{k=\tau}^{t-1}  \alpha_k \tilde{B}_{k,t} \epsilon_k\Vert_\infty$ can be controlled by Azuma-Hoeffding type inequalities, as shown by Lemma~\ref{appendix:critic:lem:martingale_bound}. We comment that $\tilde{B}_{k,t}$ is also random and $\tilde{B}_{k,t}\epsilon_k$ is no longer a martingale difference sequence. As a result, to prove Lemma~\ref{appendix:critic:lem:martingale_bound} requires more than the direct application of the Azuma-Hoeffding bound. For more details, see the full proof of Lemma~\ref{appendix:critic:lem:martingale_bound} in Appendix~\ref{appendix:critic:subsec:epsilon_phi}.

\begin{lemma}\label{appendix:critic:lem:martingale_bound}
We have with probability $1-\delta$,
$$\Big\Vert \sum_{k=\tau}^{t-1} \alpha_k \tilde{B}_{k,t} \epsilon_k \Big\Vert_\infty \leq 6 \bar{\epsilon} \sqrt{  \frac{\tau h}{\sigma(t+t_0)}  [ \log(\frac{2\tau t }{\delta}) + \fk\log SA]} .$$
\end{lemma}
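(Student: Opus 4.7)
The goal is to control $\Vert\sum_{k=\tau}^{t-1}\alpha_k\tilde{B}_{k,t}\epsilon_k\Vert_\infty$ with probability at least $1-\delta$. Three structural facts will drive the argument: (i) $\tilde{B}_{k,t}$ is a product of diagonal matrices $(I-\alpha_\ell D_\ell)$ and hence diagonal, with $z_{\nik}$-th entry almost surely in $[0,\tilde{\beta}_{k,t}]$; (ii) $\Vert\epsilon_k\Vert_\infty\leq\bar{\epsilon}$ almost surely, by Lemma~\ref{appendix:critic:lem:bounded}; and (iii) $\epsilon_k$ is a $\tau$-delayed martingale difference, $\E[\epsilon_k\mid\mathcal{F}_{k+1-\tau}]=0$.

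I plan to proceed in three steps. First, reduce the infinity norm to a scalar bound by a union bound over the $|\mathcal{Z}_{\nik}|\leq(SA)^{\fk}$ coordinates, producing the $\fk\log SA$ contribution inside the square root; for fixed $z_{\nik}$ it suffices to bound $|S(z_{\nik})|=|\sum_{k=\tau}^{t-1}\alpha_k\tilde{B}_{k,t}(z_{\nik},z_{\nik})\epsilon_k(z_{\nik})|$, which is a scalar sum thanks to the diagonality of $\tilde{B}_{k,t}$. Second, split this scalar sum into $\tau$ interleaved subseries indexed by residue $r\in\{0,\dots,\tau-1\}$ with $k\equiv r\pmod{\tau}$; within each subseries the $\tau$-delayed zero-mean property of the noise becomes a one-step martingale difference, since for $k_j=r+j\tau$ we have $\E[\epsilon_{k_j}\mid\mathcal{F}_{k_j+1-\tau}]=\E[\epsilon_{k_j}\mid\mathcal{F}_{k_{j-1}+1}]=0$. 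Third, apply Azuma-Hoeffding to each subseries with per-increment bound $\beta_{k_j,t}\bar{\epsilon}$ (from $\alpha_{k_j}\tilde{B}_{k_j,t}(z_{\nik},z_{\nik})\leq\beta_{k_j,t}$ almost surely), then aggregate: Cauchy-Schwarz across the $\tau$ subseries, combined with Lemma~\ref{appendix:critic:lem:stepsize}(b) giving $\sum_{k=\tau}^{t-1}\beta_{k,t}^2\leq\frac{2h}{\sigma(t+t_0)}$, yields a per-coordinate bound of the form $O(\bar{\epsilon}\sqrt{\tau h\log(\cdot)/[\sigma(t+t_0)]})$, and the final union bound over coordinates, residues, and partial sums produces the stated $\log(\tfrac{2\tau t}{\delta})+\fk\log SA$ factor inside the square root.

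The main obstacle, which the comment preceding the lemma explicitly flags, is that the weight $\tilde{B}_{k_j,t}(z_{\nik},z_{\nik})$ is $\mathcal{F}_{t-\tau}$-measurable and hence generally \emph{not} predictable with respect to the natural sub-sequence filtration, so $\alpha_{k_j}\tilde{B}_{k_j,t}(z_{\nik},z_{\nik})\epsilon_{k_j}(z_{\nik})$ is not immediately a martingale difference. My intended resolution is twofold: the deterministic almost-sure bound $\tilde{B}_{k_j,t}(z_{\nik},z_{\nik})\in[0,\tilde{\beta}_{k_j,t}]$ supplies the Hoeffding increment constants $\beta_{k_j,t}\bar{\epsilon}$ without requiring predictability, while the zero-mean property can be preserved by enlarging the sub-sequence filtration to $\mathcal{G}_j^{+}=\mathcal{F}_{k_j+1-\tau}\vee\sigma(D_{k_j+1},\dots,D_{t-1})$ and arguing that, because each $D_\ell$ is a functional of a strictly earlier slice of the trajectory through the $\tau$-step conditioning in Assumption~\ref{assump:local_explor}, adjoining these matrices still yields $\E[\epsilon_{k_j}\mid\mathcal{G}_{j-1}^{+}]=0$. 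An alternative clean route is a decoupling argument with an independent copy of the trajectory that produces the $D_\ell$'s, reducing to Azuma-Hoeffding with deterministic weights after conditioning. The additional $\log t$ factor (as opposed to $\log\tau$) in the target bound will come from a standard Doob-style maximal / peeling union bound over partial sums, which I expect to be routine book-keeping once the martingale setup is fixed.
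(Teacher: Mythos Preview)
Your plan correctly identifies the central difficulty: the diagonal weight $\tilde{B}_{k,t}(z_{\nik},z_{\nik})=\prod_{\ell=k+1}^{t-1}(1-\alpha_\ell\bar d_\ell(z_{\nik}))$ is not predictable, so the weighted increments are not a martingale difference sequence. However, both of your proposed remedies fail. Each $D_\ell$ is $\mathcal{F}_{\ell+1-\tau}$-measurable; in fact, by the Markov property of the state-action chain under a fixed policy, $D_\ell$ is a deterministic function of $z(\ell+1-\tau)$. Hence for $\ell=k_j+\tau-1$ and $\ell=k_j+\tau$ the matrices $D_\ell$ are functions of $z(k_j)$ and $z(k_j+1)$ respectively, which are precisely the random variables on which $\epsilon_{k_j}=(A_{k_j}-\bar A_{k_j})\hat Q_i^{k_j+1-\tau}+b_{k_j}-\bar b_{k_j}$ depends beyond $\mathcal{F}_{k_j+1-\tau}$. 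Adjoining $\sigma(D_{k_{j-1}+1},\dots,D_{t-1})$ to the filtration therefore destroys the property $\E[\epsilon_{k_j}\mid\mathcal{G}_{j-1}^+]=0$: the enlarged $\sigma$-algebra contains nontrivial information about $(z(k_j),z(k_j+1))$, and there is no reason the conditional mean of $A_{k_j}-\bar A_{k_j}$ should vanish on it. The decoupling alternative has the same problem, since the $D_\ell$'s and $\epsilon_k$'s are generated by the \emph{same} trajectory and are genuinely dependent; replacing the $D_\ell$'s by an independent copy changes the quantity you are bounding.

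The paper resolves this with a completely different, pathwise comparison argument (Lemma~\ref{appendix:critic:lem:martingale_upperbound_beta}). Writing $p_t$ for the $z_{\nik}$-coordinate of $\sum_{k=\tau}^{t-1}\alpha_k\tilde B_{k,t}\epsilon_k$ via the recursion $p_k=(1-\alpha_{k-1}\bar d_{k-1}(z_{\nik}))p_{k-1}+\alpha_{k-1}\epsilon_{k-1}(z_{\nik})$, one lets $k_0$ be the last index where $(1-\alpha_{k_0}\bar d_{k_0})|p_{k_0}|\leq\alpha_{k_0}|\epsilon_{k_0}|$. After $k_0$ the recursion never changes sign, and a monotone comparison shows that replacing the random $\bar d_\ell$ by the deterministic lower bound $\sigma$ can only increase $|p_t|$. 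This yields, almost surely,
\[
|p_t|\;\leq\;\sup_{\tau\leq k_0\leq t-1}\Big(\Big|\sum_{k=k_0+1}^{t-1}\epsilon_k(z_{\nik})\,\beta_{k,t}\Big|+2\bar\epsilon\,\beta_{k_0,t}\Big),
\]
where $\beta_{k,t}$ is deterministic. Now the shifted Azuma--Hoeffding bound (your step two) applies directly to each $\sum_{k=k_0+1}^{t-1}\epsilon_k(z_{\nik})\beta_{k,t}$; the union bound over the $t$ choices of $k_0$ is what produces the $\log t$ inside the square root, and the union bound over coordinates gives the $\fk\log SA$ term. The missing ingredient in your plan is this almost-sure reduction to deterministic weights; without it the martingale machinery cannot be applied.
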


Finally we bound sequence $\Vert \sum_{k=\tau}^{t-1}  \alpha_k \tilde{B}_{k,t}  \phi_k \Vert_\infty$ using the fact $\phi_{t-1} = (A_{t-1} - \bar{A}_{t-1})(\hat{Q}_i^{t-1} - \hat{Q}_i^{t-\tau})$ can be bounded by the movement of $\hat{Q}_i^t$ after $\tau$ steps (i.e. $\Vert \hat{Q}_i^{t-1} - \hat{Q}_i^{t-\tau}\Vert_\infty$), which is small due to the step size selection. The proof of Lemma~\ref{appendix:critic:lem:phi_bound} can also be found in Appendix~\ref{appendix:critic:subsec:epsilon_phi}.

\begin{lemma}\label{appendix:critic:lem:phi_bound} We have almost surely,  
$ \Vert \sum_{k=\tau}^{t-1}  \alpha_k \tilde{B}_{k,t}  \phi_k \Vert_\infty \leq \frac{16 \bar\epsilon h\tau}{\sigma} \frac{1}{t+t_0}:=C_\phi \frac{1}{t+t_0}$.
\end{lemma}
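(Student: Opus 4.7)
\textbf{Proof plan for Lemma~\ref{appendix:critic:lem:phi_bound}.} The strategy is to chain together the movement bound on $\phi_k$ from Lemma~\ref{appendix:critic:lem:bounded}(c), the diagonal structure of $\tilde{B}_{k,t}$, and the step-size estimate of Lemma~\ref{appendix:critic:lem:stepsize}(c); the statement is already tailored so that these three ingredients fit together cleanly.

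First I would exploit the fact that $\tilde{B}_{k,t}=\prod_{\ell=k+1}^{t-1}(I-\alpha_\ell D_\ell)$ is a \emph{diagonal} matrix, being a product of diagonal matrices (recall $D_\ell$ is diagonal by Lemma~\ref{lem:critic:barab}). Since the chosen step sizes satisfy $\alpha_\ell\le h/t_0\le 1/2$ and each diagonal entry $\bar d_\ell(z_{\nik})$ of $D_\ell$ lies in $[\sigma,1]$, every diagonal entry $\prod_{\ell=k+1}^{t-1}(1-\alpha_\ell\bar d_\ell(z_{\nik}))$ of $\tilde B_{k,t}$ is nonnegative and bounded above, entrywise, by $\tilde\beta_{k,t}=\prod_{\ell=k+1}^{t-1}(1-\alpha_\ell\sigma)$. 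Consequently, for any vector $v\in\mathbb{R}^{\mathcal{Z}_{\nik}}$, $\Vert \alpha_k\tilde B_{k,t}v\Vert_\infty\le \alpha_k\tilde\beta_{k,t}\Vert v\Vert_\infty = \beta_{k,t}\Vert v\Vert_\infty$, where I used the definition $\beta_{k,t}=\alpha_k\prod_{\ell=k+1}^{t-1}(1-\alpha_\ell\sigma)$ from Step 1.

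Next I apply this with $v=\phi_k$ and invoke Lemma~\ref{appendix:critic:lem:bounded}(c), which yields $\Vert \phi_k\Vert_\infty\le 2\bar\epsilon\sum_{\ell=k-\tau+1}^{k-1}\alpha_\ell$ almost surely. Combining with the triangle inequality gives
\begin{equation*}
\Big\Vert \sum_{k=\tau}^{t-1}\alpha_k\tilde B_{k,t}\phi_k\Big\Vert_\infty \;\le\; \sum_{k=\tau}^{t-1}\beta_{k,t}\,\Vert\phi_k\Vert_\infty \;\le\; 2\bar\epsilon\sum_{k=\tau}^{t-1}\beta_{k,t}\sum_{\ell=k-\tau+1}^{k-1}\alpha_\ell.
\end{equation*}
Finally, applying Lemma~\ref{appendix:critic:lem:stepsize}(c) to the double sum produces the desired bound $2\bar\epsilon\cdot\tfrac{8h\tau}{\sigma}\cdot\tfrac{1}{t+t_0}=\tfrac{16\bar\epsilon h\tau}{\sigma(t+t_0)}=C_\phi/(t+t_0)$.

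There is no serious obstacle here: the nontrivial work was done in the earlier auxiliary lemmas. The only things to verify carefully are (i) that the step-size hypothesis $t_0\geq\max(2h,4\sigma h,\tau)$ of Theorem~\ref{appendix:critic:thm} is strong enough to guarantee $\alpha_\ell\bar d_\ell(z_{\nik})\in[0,1]$ so that $\tilde B_{k,t}$ has nonnegative diagonal entries (hence the entrywise domination by $\tilde\beta_{k,t}$ is legitimate), and (ii) that the sum $\sum_{\ell=k-\tau+1}^{k-1}\alpha_\ell$ indexing range in Lemma~\ref{appendix:critic:lem:bounded}(c) matches the one in Lemma~\ref{appendix:critic:lem:stepsize}(c). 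Both are straightforward book-keeping.
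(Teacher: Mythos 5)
Your proposal is correct and follows exactly the paper's argument: the paper proves this lemma in one line by noting $\Vert \sum_{k=\tau}^{t-1}\alpha_k\tilde{B}_{k,t}\phi_k\Vert_\infty\leq\sum_{k=\tau}^{t-1}\beta_{k,t}\Vert\phi_k\Vert_\infty$ (using that every entry of $\tilde{B}_{k,t}$ is bounded by $\tilde\beta_{k,t}$, established in Step 1) and then combining Lemma~\ref{appendix:critic:lem:bounded}(c) with Lemma~\ref{appendix:critic:lem:stepsize}(c). Your extra care about the nonnegativity of the diagonal entries of $\tilde{B}_{k,t}$ is a sensible sanity check but introduces nothing beyond what the paper already uses.
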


\textbf{Step 3: bound the critic error. } We are now ready to use the error decomposition in Lemma~\ref{appendix:critic:lem:error_recursive} as well as the bounds on the $\epsilon_k$, $\phi_k$-sequences in Lemma~\ref{appendix:critic:lem:martingale_bound} and Lemma~\ref{appendix:critic:lem:phi_bound} to bound the error of the critic. Recall that Theorem~\ref{appendix:critic:thm} states with probability $1-\delta$,
\begin{align}
    \xi_T\leq \frac{C_a}{\sqrt{T+t_0}} + \frac{C_a'}{T+t_0} + \frac{C_0}{1-\gamma}, \label{appendix:critic:eq:step5_errorbound}
\end{align}
where $C_0 = \frac{2c \rhok}{1-\gamma} $, and
$$C_a=   \frac{6\bar{\epsilon}}{1-\sqrt{\gamma}}  \sqrt{  \frac{\tau h}{\sigma}  [ \log(\frac{2\tau T^2 }{\delta}) + \fk\log SA]} ,\quad  C_a'= \frac{2}{1-\sqrt{\gamma}} \max( \frac{16\bar\epsilon  h\tau}{\sigma}, \frac{2 \bar{r}}{1-\gamma}{(\tau+t_0)}).$$

    To prove \eqref{appendix:critic:eq:step5_errorbound}, we start by applying Lemma~\ref{appendix:critic:lem:martingale_bound} to $t\leq T$ with $\delta$ replaced by $\delta/T$. Then, using a union bound, we get with probability $1-\delta$, for any $t\leq T$,
    $\Big\Vert \sum_{k=\tau}^{t-1} \alpha_k \tilde{B}_{k,t} \epsilon_k \Big\Vert_\infty \leq C_\epsilon \frac{1}{\sqrt{t+t_0}}$
 with $ C_\epsilon = 6 \bar{\epsilon} \sqrt{  \frac{\tau h}{\sigma}  [ \log(\frac{2\tau T^2 }{\delta}) + \fk\log SA]} $. Combining this with Lemma~\ref{appendix:critic:lem:error_recursive} and using Lemma~\ref{appendix:critic:lem:phi_bound}, we get with probability $1-\delta$, for all $\tau\leq t\leq T$,
\begin{align}
    \xi_t \leq \tilde{\beta}_{\tau-1,t}\xi_\tau  + \gamma \sup_{z_{\nik}}\sum_{k=\tau}^{t-1}  b_{k,t}(z_{\nik})  \xi_k + C_\epsilon\frac{1}{\sqrt{t+t_0}} + C_\phi \frac{1}{t+t_0} +C_0. \label{appendix:critic:eq:step_5_errorrecursive}
\end{align}
We now condition on \eqref{appendix:critic:eq:step_5_errorrecursive} is true and use induction to show \eqref{appendix:critic:eq:step5_errorbound}. Eq. \eqref{appendix:critic:eq:step5_errorbound} is true for $t=\tau$, as $\frac{C_a'}{\tau+t_0} \geq \frac{2}{1-\sqrt{\gamma}} \frac{2 \bar{r}}{1-\gamma} > \xi_\tau $, where we have used $|\xi_\tau| \leq \Vert Q_i^*\Vert_\infty + \Vert \hat{Q}_i^\tau\Vert_\infty \leq \frac{2\bar{r}}{1-\gamma}$. Then, assume \eqref{appendix:critic:eq:step5_errorbound}  is true for up to $k\leq t-1$, we have by \eqref{appendix:critic:eq:step_5_errorrecursive},
\begin{align*}
    \xi_t &\leq \tilde{\beta}_{\tau-1,t}\xi_\tau  + \gamma \sup_{z_{\nik}}\sum_{k=\tau}^{t-1}  b_{k,t}(z_{\nik})  [\frac{C_a}{\sqrt{k+t_0}} +\frac{C_a'}{k+t_0}+ \frac{C_0}{1-\gamma}] + C_\epsilon\frac{1}{\sqrt{t+t_0}} + C_\phi \frac{1}{t+t_0} +C_0\\
    &\leq \tilde{\beta}_{\tau-1,t}\xi_\tau  + \gamma C_a \sup_{z_{\nik}}\sum_{k=\tau}^{t-1}  b_{k,t}(z_{\nik})  \frac{1}{\sqrt{k+t_0}} + \gamma C_a' \sup_{z_{\nik}}\sum_{k=\tau}^{t-1}  b_{k,t}(z_{\nik})  \frac{1}{{k+t_0}} \\
    &\qquad + C_\epsilon\frac{1}{\sqrt{t+t_0}} + C_\phi \frac{1}{t+t_0} + \frac{C_0}{1-\gamma}.
\end{align*}
We use the following auxiliary Lemma, whose proof is provided in Section~\ref{appendix:critic:subsec:b_kt_bound}.
\begin{lemma}\label{lem:b_kt_bound}
Recall $\alpha_k = \frac{h}{k+t_0}$, and $b_{k,t}(z_{\nik}) = \alpha_k \bar{d}_k(z_{\nik}) \prod_{\ell=k+1}^{t-1}(1-\alpha_\ell \bar{d}_\ell(z_{\nik})) $, here $\bar{d}_k(z_{\nik})\geq \sigma$. If $\sigma h (1-\sqrt{\gamma}) \geq 1$, $t_0\geq 1$, and $\alpha_0\leq \frac{1}{2}$, then, for any $z_{\nik}$, and any $0<\omega\leq 1$,
$$\sum_{k=\tau}^{t-1}  b_{k,t}(z_{\nik}) \frac{1}{(k+t_0)^\omega} \leq \frac{1}{\sqrt{\gamma}{(t+t_0)}^\omega}.  $$
\end{lemma}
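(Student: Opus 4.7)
Fix $z_{\nik}$ and write $p_k := \bar d_k(z_{\nik}) \in [\sigma, 1]$, and let $S_t := \sum_{k=\tau}^{t-1} b_{k,t}(z_{\nik})(k+t_0)^{-\omega}$. Directly from the product definition $b_{k,t} = \alpha_k p_k \prod_{\ell=k+1}^{t-1}(1-\alpha_\ell p_\ell)$, I observe the index-shift identities $b_{k,t+1} = (1-\alpha_t p_t)\, b_{k,t}$ for $k\leq t-1$ and $b_{t,t+1} = \alpha_t p_t$. Summing these against the weights $(k+t_0)^{-\omega}$ yields the affine recursion
$$
S_{t+1} \;=\; (1-\alpha_t p_t)\,S_t \;+\; \alpha_t p_t\,(t+t_0)^{-\omega}, \qquad S_\tau = 0.
$$
The claim is therefore reduced to propagating the bound $S_t \leq \frac{1}{\sqrt{\gamma}(t+t_0)^\omega}$ through this scalar recursion.

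\textbf{Inductive step.} The base case $t=\tau$ is trivial. Assuming the bound at time $t$, plugging into the recursion and factoring out $\frac{1}{\sqrt{\gamma}(t+t_0)^\omega}$ gives
$$
S_{t+1} \;\leq\; \frac{1 - \alpha_t p_t(1-\sqrt{\gamma})}{\sqrt{\gamma}\,(t+t_0)^\omega},
$$
so it suffices to verify $\bigl(1+\tfrac{1}{t+t_0}\bigr)^\omega \bigl[1 - \alpha_t p_t(1-\sqrt{\gamma})\bigr] \leq 1$. Because $\alpha_t p_t (1-\sqrt{\gamma}) \leq \alpha_0 \leq \tfrac12$, the standard bounds $\ln(1+x)\leq x$ and $-\ln(1-x)\geq x$ reduce this to the linear condition $\omega/(t+t_0) \leq \alpha_t p_t (1-\sqrt{\gamma})$. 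Substituting $\alpha_t = h/(t+t_0)$ and $p_t \geq \sigma$ collapses this to $\omega \leq \sigma h(1-\sqrt{\gamma})$, which is exactly the standing hypothesis $\sigma h(1-\sqrt{\gamma})\geq 1$ combined with $\omega\leq 1$. The induction closes.

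\textbf{Where the real content lives.} Intuitively, the weights $b_{k,t}(z_{\nik})$ are almost a probability distribution (they satisfy $\sum_{k=\tau}^{t-1} b_{k,t} = 1 - \prod_{\ell=\tau}^{t-1}(1-\alpha_\ell p_\ell)\leq 1$) and the geometric decay of the products concentrates the mass near $k=t$; thus the weighted sum of $(k+t_0)^{-\omega}$ should be within a constant factor of the endpoint value $(t+t_0)^{-\omega}$. The only genuine subtlety is choosing the right constant: any $C>1$ propagates through the recursion provided the log-slack covers $\omega$, and $C=1/\sqrt{\gamma}$ is precisely the choice that converts the hypothesis $\sigma h(1-\sqrt{\gamma})\geq 1$ into the sharp linear condition $\omega \leq \sigma h(1-\sqrt{\gamma})$ with no waste. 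No heavy calculation is needed beyond the single-step algebra above.
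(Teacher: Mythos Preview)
Your proof is correct and is essentially identical to the paper's argument: both derive the same one-step affine recursion for the weighted sum, run the same induction with target $\frac{1}{\sqrt{\gamma}(t+t_0)^\omega}$, and close the inductive step via the same log/exponential inequality reducing to $\omega \leq \sigma h(1-\sqrt{\gamma})$. The only cosmetic difference is that you anchor the base case at $S_\tau=0$ whereas the paper checks $t=\tau+1$ explicitly.
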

With Lemma~\ref{lem:b_kt_bound}, and using the bound on $\tilde{\beta}_{\tau-1,t}$ in Lemma~\ref{appendix:critic:lem:stepsize} (a), we have
\begin{align*}
    \xi_t& \leq \tilde{\beta}_{\tau-1,t}\xi_\tau  + \sqrt{\gamma} C_a \frac{1}{\sqrt{t+t_0}} + \sqrt{\gamma} C_a' \frac{1}{{t+t_0}} + C_\epsilon\frac{1}{\sqrt{t+t_0}} + C_\phi \frac{1}{t+t_0} + \frac{C_0}{1-\gamma}\\
    &\leq  \underbrace{\sqrt{\gamma} C_a \frac{1}{\sqrt{t+t_0}} + C_\epsilon\frac{1}{\sqrt{t+t_0}}}_{:=F_t}  + \underbrace{\sqrt{\gamma} C_a' \frac{1}{{t+t_0}} + C_\phi \frac{1}{t+t_0}+ \Big(\frac{\tau+t_0}{t+t_0}\Big)^{\sigma h}\xi_\tau}_{:= F_t'}  + \frac{C_0}{1-\gamma} .
\end{align*}
To finish the induction, it suffices to show $F_t\leq \frac{C_a}{\sqrt{t+t_0}}$ and $F_t' \leq \frac{C_a'}{t+t_0}$. To see this, note
\begin{align*}
    F_t\frac{\sqrt{t+t_0}}{C_a} &= \sqrt{\gamma} + \frac{C_\epsilon}{C_a}, \quad
    F_t'\frac{t+t_0}{C_a'} = \sqrt{\gamma} +  \frac{C_\phi}{C_a'} + \frac{\xi_\tau(\tau+t_0)}{C_a'} \frac{(\tau+t_0)^{\sigma h-1}}{(t+t_0)^{\sigma h - 1}}.
\end{align*}
So, we can require $C_a, C_a'$ to be large enough such that
    $\frac{C_\epsilon}{C_a}  \leq  1 - \sqrt{\gamma}$, $
    \frac{C_\phi}{C_a'} \leq \frac{1 - \sqrt{\gamma}}{2}$, and $\frac{\xi_\tau {(\tau+t_0)}}{C_a'}  \leq \frac{1 - \sqrt{\gamma}}{2}$.
Using $\xi_\tau\leq \frac{2\bar{r}}{1-\gamma}$, one can check our selection of $C_a$ and $C_a'$ satisfies the above three inequalities, and so the induction is finished and the proof of Theorem~\ref{appendix:critic:thm} is concluded. \qed

\subsection{Analysis of the Actor and Proof of Main Result (Theorem~\ref{thm:main})} \label{subsec:analysis_actor}
With the error of the critic bounded in Theorem~\ref{appendix:critic:thm}, the second part of the proof focuses on the actor, i.e. the policy gradient step. Recall that at iteration $m$, the policy gradient step is given by
$ \theta_i(m+1) = \theta_i(m) + \eta_m \hat{g}_i(m)$  
with $\eta_m = \frac{\eta}{\sqrt{m+1}}$ and $\hat{g}_i(m)$ is given by 
\begin{align}
    \hat{g}_i(m) =  \sum_{t=0}^{T} \gamma^t  \frac{1}{n} \sum_{j\in \nik} \hat{Q}_j^{m,T} (s_{\njk}(t),a_{\njk}(t)) \nabla_{\theta_i} \log \zeta_i^{\theta_i(m)} (a_i(t)|s_i(t)) , \label{appendix:actor:hat_g_i}
\end{align}
where $\hat{Q}_i^{m,T}$ is the final estimate of the $Q$-function for $r_i$ at the end of the critic loop in iteration $m$, where we have added an additional superscript $m$ to $\hat{Q}_i^{m,T}$ to indicate its dependence on $m$; $\{s(t), a(t)\}_{t=0}^T$ is the state-action trajectory with $s(0)$ drawn from $\pi_0$ (the initial state distribution defined in the objective function $J(\theta)$, cf. \eqref{eq:discounted_cost}) and the agents taking policy $\theta(m)$. Our goal is to show that $\hat{g}_i(m)$ is approximately the right gradient direction, $\nabla_{\theta_i}J(\theta(m))$, which by Lemma~\ref{lem:policy_grad} can be written as,
\begin{align}
    \nabla_{\theta_i} J(\theta(m)) = \sum_{t=0}^\infty \E_{s\sim\pi_t^{\theta(m)}, a\sim \zeta^{\theta(m)}(\cdot|s)} \left[\gamma^t Q^{\theta(m)}(s,a) \nabla_{\theta_i}\log\zeta^{\theta(m)}(a|s) \right],\label{appendix:actor:eq:grad_i}
\end{align}
where $\pi_t^{\theta(m)}$ is the distribution of $s(t)$ under fixed policy $\theta(m)$ when the initial state is drawn from $\pi_0$; $Q^{\theta(m)}$ is the true $Q$ function for the global reward $r$ under policy $\theta(m)$, cf. \eqref{eq:full_q}. 

To bound the difference between $\hat{g}_i(m)$ and the true gradient $\nabla_{\theta_i}J(\theta(m))$, we define the following additional sequences, 
\begin{align}
g_i (m) &= \sum_{t=0}^{T} \gamma^t \frac{1}{n} \sum_{j\in \nik} Q_j^{\theta(m)} (s(t),a(t)) \nabla_{\theta_i} \log \zeta_i^{\theta_i(m)} (a_i(t)|s_i(t)), \label{appendix:actor:eq:g_i} \\
h_i(m) &=  \sum_{t=0}^T \E_{s\sim\pi^{\theta(m)}_t,a \sim \zeta^{\theta(m)}(\cdot|s)}\left[ \gamma^t \frac{1}{n} \sum_{j\in {\nik}} {Q}_j^{\theta(m)} (s,a)\nabla_{\theta_i} \log\zeta_i^{\theta_i(m)} (a_i|s_i)\right],\label{appendix:actor:eq:h_i}
\end{align}
where $Q_i^{\theta(m)}$ is the true $Q$ function for $r_i$ under policy $\theta(m)$. We also use notation $h(m)$, $g(m)$, $\hat{g}(m)$ to denote the respective $h_i(m), g_i(m), \hat{g}_i(m)$ stacked into a larger vector. The following result is an immediate consequence of Assumption~\ref{assump:bounded_reward} and Assumption~\ref{assump:gradient_bounded}, whose proof is postponed to Appendix~\ref{appendix:actor:subsec:upperbound}.
\begin{lemma}\label{appendix:actor:lem:upperbound} We have almost surely, $\forall m\leq M$,
   $ \max(\Vert\hat{g}(m)\Vert, \Vert g(m)\Vert, \Vert h(m)\Vert, \Vert \nabla J(\theta(m))\Vert ) \leq \frac{\bar{r} L}{(1-\gamma)^2} $. 
\end{lemma}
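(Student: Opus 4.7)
All four quantities share a common template: a discounted sum (or integral) of products of a $Q$-type value and a score function $\nabla_{\theta_i}\log \zeta_i^{\theta_i}$. The strategy is to bound each factor uniformly, apply the triangle inequality to get a per-coordinate bound $\|(\cdot)_i(m)\|\leq \bar{r}L_i/(1-\gamma)^2$, and then stack over $i$ using the identity $L=\sqrt{\sum_i L_i^2}$ from Assumption~\ref{assump:gradient_bounded}.

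First I would collect the uniform bounds on the $Q$-factors. By Assumption~\ref{assump:bounded_reward}, $r_j\in[0,\bar{r}]$, so the true $Q$-functions satisfy $|Q_j^{\theta(m)}(s,a)|\leq \bar{r}/(1-\gamma)$ and $|Q^{\theta(m)}(s,a)|\leq \bar{r}/(1-\gamma)$ as geometric sums of bounded rewards. For the truncated estimates produced by the critic inner loop, Lemma~\ref{appendix:critic:lem:bounded}(a) directly gives $\|\hat{Q}_j^{m,T}\|_\infty \leq \bar{r}/(1-\gamma)$. Since $N_i^{\khop}\subseteq\mathcal{N}$ has at most $n$ elements, the neighborhood averages appearing inside $\hat{g}_i(m)$ and $g_i(m)$ are also bounded by $\bar{r}/(1-\gamma)$:
\[
\Big|\tfrac{1}{n}\sum_{j\in \nik}\hat{Q}_j^{m,T}\Big|\leq \tfrac{|\nik|}{n}\cdot\tfrac{\bar{r}}{1-\gamma}\leq \tfrac{\bar{r}}{1-\gamma},
\]
and identically for $Q_j^{\theta(m)}$. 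From Assumption~\ref{assump:gradient_bounded}, $\|\nabla_{\theta_i}\log\zeta_i^{\theta_i(m)}(a_i|s_i)\|\leq L_i$.

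Combining these through the triangle inequality with the geometric tail $\sum_{t=0}^{T}\gamma^t \leq 1/(1-\gamma)$, I obtain $\|\hat{g}_i(m)\|\leq \bar{r}L_i/(1-\gamma)^2$ and $\|g_i(m)\|\leq \bar{r}L_i/(1-\gamma)^2$ deterministically. For $h_i(m)$ in \eqref{appendix:actor:eq:h_i}, passing the norm inside the expectation via Jensen's inequality reduces it to the same pointwise bound, yielding $\|h_i(m)\|\leq \bar{r}L_i/(1-\gamma)^2$. For $\nabla_{\theta_i}J(\theta(m))$ I use the identity \eqref{appendix:actor:eq:grad_i} together with the factorization $\zeta^\theta(a|s)=\prod_j \zeta_j^{\theta_j}(a_j|s_j)$, which gives $\nabla_{\theta_i}\log\zeta^{\theta(m)}(a|s)=\nabla_{\theta_i}\log\zeta_i^{\theta_i(m)}(a_i|s_i)$, and with $|Q^{\theta(m)}|\leq \bar{r}/(1-\gamma)$ this produces the same bound (now with the full series $\sum_{t=0}^\infty\gamma^t=1/(1-\gamma)$).

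Finally, squaring and summing over $i$ gives
\[
\|\hat{g}(m)\|^2=\sum_{i=1}^{n}\|\hat{g}_i(m)\|^2\leq \frac{\bar{r}^2}{(1-\gamma)^4}\sum_{i=1}^{n}L_i^2 = \frac{\bar{r}^2 L^2}{(1-\gamma)^4},
\]
and the same chain of inequalities applies to $g(m)$, $h(m)$, and $\nabla J(\theta(m))$. There is no real obstacle beyond careful bookkeeping; the only subtle point is recognizing that the explicit $\frac{1}{n}$ in front of the sum over $N_i^{\khop}$ exactly absorbs the possibility that $|\nik|$ can be as large as $n$, so no dimension-dependent constant appears in the final bound.
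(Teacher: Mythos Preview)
Your proposal is correct and follows essentially the same approach as the paper: bound each coordinate $\|\hat g_i(m)\|$ via the uniform bound $\bar r/(1-\gamma)$ on the $Q$-factors (using Lemma~\ref{appendix:critic:lem:bounded}(a) for $\hat Q_j^{m,T}$), the score bound $L_i$ from Assumption~\ref{assump:gradient_bounded}, and the geometric tail, then stack over $i$ using $L=\sqrt{\sum_i L_i^2}$. The paper's proof is terser (it omits the explicit $|N_i^\kappa|/n\le 1$ observation and the Jensen step for $h_i$, and just says the remaining three bounds are ``obtained in an almost identical way''), but the argument is the same.
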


With these definitions, we decompose the error between the gradient estimator $\hat{g}(m)$ and the true gradient $\nabla J(\theta(m))$ into the following three terms,
\begin{align}
    \hat{g}(m) = \underbrace{\hat{g}(m) - g(m)}_{e^1(m)} + \underbrace{g(m) - h(m)}_{e^2(m)} + \underbrace{h(m) - \nabla J(\theta(m))}_{e^3(m)} + \nabla J(\theta(m)) \label{appendix:actor:eq:grad_err_decomposition}.
\end{align}
In the following, we will provide bounds on $e^1(m), e^2(m), e^3(m)$, and then combine these bounds to prove our main result Theorem~\ref{thm:main}. 

\noindent\textbf{Bounds on $ e^1(m) $. } Notice that the difference between $\hat{g}_i(m)$ and $g_i(m)$ is that the critic estimate $\hat{Q}_j^{m,T}$ is replaced with the true $Q$-function $Q_j^{\theta(m)}$. By Theorem~\ref{appendix:critic:thm}, we have $\hat{Q}_j^{m,T}$ will be very close to $Q_j^{\theta(m)}$ with high probability when $T$ is large enough, based on which we can bound $\Vert e^1(m)\Vert$, which is formally provided in Lemma~\ref{appendix:actor:lem:e1_bound}. The proof of Lemma~\ref{appendix:actor:lem:e1_bound} is postponed to Appendix~\ref{appendix:actor:subsec:e1_bound}.
\begin{lemma}\label{appendix:actor:lem:e1_bound}
     When $T$ is large enough s.t. $\frac{C_a(\frac{\delta}{2 nM},T)}{\sqrt{T+t_0}} + \frac{C_a'}{T+t_0}  \leq \frac{2c\rhok}{(1-\gamma)^2}  $, where $$C_a(\delta,T)=   \frac{6\bar{\epsilon}}{1-\sqrt{\gamma}}  \sqrt{  \frac{\tau h}{\sigma}  [ \log(\frac{2\tau T^2 }{\delta}) + \fk\log SA]} ,\quad C_a'= \frac{2}{1-\sqrt{\gamma}} \max( \frac{16\bar\epsilon  h\tau}{\sigma}, \frac{2 \bar{r}}{1-\gamma}{(\tau+t_0)}),$$
with $\bar{\epsilon} =4 \frac{\bar{r}}{1-\gamma} + 2\bar{r} $,
    then we have with probability at least $1-\frac{\delta}{2}$, 
    $\sup_{0\leq m\leq M-1} \Vert e^1(m)\Vert \leq \frac{4c L\rhok}{(1-\gamma)^3}$.
\end{lemma}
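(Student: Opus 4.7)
\textbf{Proof proposal for Lemma~\ref{appendix:actor:lem:e1_bound}.} The plan is to reduce the bound on $\Vert e^1(m)\Vert$ to a pointwise bound on the critic error $|\hat Q_j^{m,T}(s_{\njk},a_{\njk}) - Q_j^{\theta(m)}(s,a)|$ that holds uniformly in $(m,j)$, and then to absorb the graph-size factors using the $1/n$ normalization built into $\hat g_i$ and $g_i$. The critic bound is already established in Theorem~\ref{appendix:critic:thm}, so most of the work is a clean union bound plus triangle-inequality bookkeeping.

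\textbf{Step 1 (union bound on the critic).} For each pair $(m,j)$ with $m\in\{0,\dots,M-1\}$ and $j\in\mathcal{N}$, apply Theorem~\ref{appendix:critic:thm} with failure probability $\delta/(2nM)$. Since the critic inner loop inside outer iteration $m$ is run under the fixed policy $\theta(m)$ and produces $\hat Q_j^{m,T}$, the theorem gives that with probability at least $1-\delta/(2nM)$,
\begin{equation*}
\sup_{(s,a)\in\mathcal{S}\times\mathcal{A}} \bigl|\hat Q_j^{m,T}(s_{\njk},a_{\njk}) - Q_j^{\theta(m)}(s,a)\bigr| \;\leq\; \frac{C_a(\tfrac{\delta}{2nM},T)}{\sqrt{T+t_0}} + \frac{C_a'}{T+t_0} + \frac{2c\rhok}{(1-\gamma)^2}.
\end{equation*}
Taking a union bound over the $nM$ pairs $(m,j)$ yields, with probability at least $1-\delta/2$ simultaneously for all $m$ and $j$, the pointwise critic error at most $\frac{C_a(\delta/(2nM),T)}{\sqrt{T+t_0}} + \frac{C_a'}{T+t_0} + \frac{2c\rhok}{(1-\gamma)^2}$. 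Under the hypothesis on $T$ stated in the lemma, the first two terms are together bounded by $\frac{2c\rhok}{(1-\gamma)^2}$, so the total critic error is at most $\frac{4c\rhok}{(1-\gamma)^2}$ uniformly in $(m,j,s,a)$.

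\textbf{Step 2 (from pointwise critic error to $\Vert e^1(m)\Vert$).} Fix any outer iteration $m$ and any agent $i$. Writing $\Delta_j(t) := \hat Q_j^{m,T}(s_{\njk}(t),a_{\njk}(t)) - Q_j^{\theta(m)}(s(t),a(t))$, the definitions of $\hat g_i(m)$ and $g_i(m)$ give
\begin{equation*}
\hat g_i(m) - g_i(m) \;=\; \sum_{t=0}^{T}\gamma^t\,\frac{1}{n}\sum_{j\in\nik}\Delta_j(t)\,\nabla_{\theta_i}\log\zeta_i^{\theta_i(m)}(a_i(t)\mid s_i(t)),
\end{equation*}
so by the triangle inequality, Assumption~\ref{assump:gradient_bounded}, and the elementary fact $|\nik|/n \leq 1$,
\begin{equation*}
\Vert \hat g_i(m) - g_i(m)\Vert \;\leq\; \sum_{t=0}^{T}\gamma^t \cdot \frac{|\nik|}{n}\cdot \frac{4c\rhok}{(1-\gamma)^2}\cdot L_i \;\leq\; \frac{4cL_i\rhok}{(1-\gamma)^3}.
\end{equation*}
Squaring and summing over $i$, recalling $L = \sqrt{\sum_i L_i^2}$, gives the claimed bound $\Vert e^1(m)\Vert \leq \frac{4cL\rhok}{(1-\gamma)^3}$, and taking a supremum over $m$ is free since the Step~1 event is simultaneous.

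\textbf{Main obstacle.} There is no deep difficulty; the only subtlety is making the confidence budget work. I must apply Theorem~\ref{appendix:critic:thm} $nM$ times (once per pair $(m,j)$), which forces the per-event confidence $\delta/(2nM)$ to appear inside $C_a(\cdot,T)$ and produces the $\log(nM)$ factor hidden in the condition \eqref{eq:main_theorem_T_bound} on $T$. I should double-check that this union bound is permissible, i.e.\ that the hypotheses of Theorem~\ref{appendix:critic:thm} (the conditions on $h,t_0$) are satisfied uniformly in $m$, which they are since those conditions depend only on $\sigma,\tau,\gamma$ and not on the policy. Once this bookkeeping is in place the remaining steps are purely algebraic.
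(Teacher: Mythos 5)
Your proposal is correct and follows essentially the same route as the paper: apply Theorem~\ref{appendix:critic:thm} with confidence $\delta/(2nM)$ for each pair $(m,j)$, union bound, use the hypothesis on $T$ to collapse the transient terms into an extra $\frac{2c\rhok}{(1-\gamma)^2}$, and then propagate the uniform critic error through the definition of $\hat g_i(m)-g_i(m)$ via the triangle inequality, $|\nik|/n\le 1$, and $L=\sqrt{\sum_i L_i^2}$. The only cosmetic difference is that the paper is slightly more explicit that Theorem~\ref{appendix:critic:thm} is applied conditionally on the $\sigma$-algebra $\mathcal{G}_{m-1}$ (since $\theta(m)$ is random) before averaging out and union bounding, but your reasoning covers the same point.
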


\noindent\textbf{Bounds on $e^2(m)$.} Let $\mathcal{G}_m$ be the $\sigma$-algebra generated by the trajectories in the first $m$ outer-loop iterations. Then, $\theta(m)$ is $\mathcal{G}_{m-1}$ measurable, and so is $h_i(m)$. Further, by the way that the trajectory $\{(s(t),a(t))\}_{t=0}^T$ is generated, we have $\E [g(m) | \mathcal{G}_{m-1}] = h(m)$. As such, $ \eta_m\langle \nabla J(\theta(m)), e^2(m) \rangle$ is a martingale difference sequence w.r.t. $\mathcal{G}_m$, and we have the following bound in Lemma~\ref{appendix:actor:lem:martingale_bound} which is a direct consequence of Azuma-Hoeffding bound. The proof of Lemma~\ref{appendix:actor:lem:martingale_bound} is postponed to Section~\ref{appendix:actor:subsec:martingale_bound}. 

\begin{lemma}
\label{appendix:actor:lem:martingale_bound}
    	With probability at least $1-\delta/2$, we have
    	$$\Big| \sum_{m=0}^{M-1} \eta_m \langle \nabla J(\theta(m)), e^2(m) \rangle \Big| \leq  \frac{2\bar{r}^2 L^2}{(1-\gamma)^4} \sqrt{2\sum_{m=0}^{M-1} \eta_m^2 \log\frac{4}{\delta} }.$$
\end{lemma}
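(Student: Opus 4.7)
The plan is to apply the Azuma--Hoeffding inequality to the sum $\sum_{m=0}^{M-1} \eta_m \langle \nabla J(\theta(m)), e^2(m)\rangle$, which the paragraph preceding the lemma has already set up as a martingale difference sequence with respect to the filtration $\{\mathcal{G}_m\}$. The two ingredients I need are (i) confirming the martingale difference property and (ii) deriving a uniform almost-sure bound on each summand.

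For (i), note that $\theta(m)$ is $\mathcal{G}_{m-1}$-measurable and hence so are $\nabla J(\theta(m))$, $h(m)$, and $\eta_m$. The trajectory $\{(s(t),a(t))\}_{t=0}^T$ used in forming $g(m)$ is drawn, inside outer iteration $m$, with $s(0)\sim\pi_0$ and actions taken according to the fixed policy $\theta(m)$, so by the definitions of $g(m)$ in \eqref{appendix:actor:eq:g_i} and $h(m)$ in \eqref{appendix:actor:eq:h_i} and the identity $\pi_t^{\theta(m)}$ being exactly the law of $s(t)$ under this rollout, we get $\E[g(m)\mid\mathcal{G}_{m-1}] = h(m)$. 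Thus $\E[e^2(m)\mid\mathcal{G}_{m-1}]=0$ and $\E[\eta_m\langle\nabla J(\theta(m)),e^2(m)\rangle \mid \mathcal{G}_{m-1}]=0$, which verifies the MDS property.

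For (ii), I use Lemma~\ref{appendix:actor:lem:upperbound} to bound $\Vert\nabla J(\theta(m))\Vert \leq \tfrac{\bar r L}{(1-\gamma)^2}$ and $\Vert e^2(m)\Vert \leq \Vert g(m)\Vert + \Vert h(m)\Vert \leq \tfrac{2\bar r L}{(1-\gamma)^2}$, so Cauchy--Schwarz gives
\begin{equation*}
|\eta_m\langle \nabla J(\theta(m)), e^2(m)\rangle| \leq \eta_m\cdot \frac{2\bar r^2 L^2}{(1-\gamma)^4} =: c_m.
\end{equation*}

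For (iii), I apply the standard Azuma--Hoeffding inequality: for any $u>0$,
\begin{equation*}
\Pr\Bigl( \Bigl|\sum_{m=0}^{M-1} \eta_m\langle \nabla J(\theta(m)), e^2(m)\rangle\Bigr| \geq u \Bigr) \leq 2\exp\Bigl(-\frac{u^2}{2\sum_{m=0}^{M-1} c_m^2}\Bigr).
\end{equation*}
Setting the right-hand side equal to $\delta/2$, solving for $u$, and substituting $c_m = \eta_m\cdot\tfrac{2\bar r^2 L^2}{(1-\gamma)^4}$ yields precisely
\begin{equation*}
u = \frac{2\bar r^2 L^2}{(1-\gamma)^4}\sqrt{2 \sum_{m=0}^{M-1}\eta_m^2 \,\log\tfrac{4}{\delta}},
\end{equation*}
matching the stated bound. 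There is no real obstacle here; the only thing to be careful about is the factor-of-$2$ accounting (the $\log(4/\delta)$ versus $\log(2/\delta)$), which comes from turning the two-sided Azuma bound into a probability-$\delta/2$ event so that, together with the probability-$\delta/2$ event used for $e^1(m)$ in Lemma~\ref{appendix:actor:lem:e1_bound}, a union bound delivers the overall probability-$(1-\delta)$ guarantee in Theorem~\ref{thm:main}.
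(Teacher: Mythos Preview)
Your proposal is correct and follows essentially the same approach as the paper: bound each summand almost surely via Lemma~\ref{appendix:actor:lem:upperbound} and Cauchy--Schwarz, then apply the two-sided Azuma--Hoeffding inequality with failure probability $\delta/2$. If anything, your write-up is more complete than the paper's, since you spell out the verification of the martingale-difference property that the paper merely asserts in the paragraph preceding the lemma.
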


\noindent\textbf{Bounds on $e^3(m)$. } The term $e^3(m)$ is the error between $h(m)$ and the true gradient $\nabla J(\theta(m))$, where $h(m)$ is similar to the truncated policy gradient in Lemma~\ref{lem:truncated_pg} and therefore should be close to the true gradient by Lemma~\ref{lem:truncated_pg}. We provide Lemma~\ref{appendix:actor:lem:error_h_grad} below to bound $\Vert e^3(m)\Vert$. Due to technical issues, $h(m)$ is not exactly the same as the truncated policy gradient defined in Lemma~\ref{lem:truncated_pg}, but a variant of it instead. Therefore the conclusion of Lemma~\ref{lem:truncated_pg} can not be directly used in the proof of Lemma~\ref{appendix:actor:lem:error_h_grad}. Nevertheless, the proof of Lemma~\ref{appendix:actor:lem:error_h_grad} follows essentially the same arguments as in Lemma~\ref{lem:truncated_pg} and is postponed to Appendix~\ref{appendix:actor:subsec:error_h_grad}. 

\begin{lemma} \label{appendix:actor:lem:error_h_grad}
	 When $T+1\geq \frac{\log \left(\frac{c(1-\gamma)}{\bar{r}}\right)+ (\khop+1)\log\rho}{\log \gamma}  $, we have almost surely,
$\Vert e^3(m) \Vert\leq  2 \frac{L c }{(1-\gamma)}\rhok	$.
\end{lemma}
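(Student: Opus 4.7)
\textbf{Proof proposal for Lemma~\ref{appendix:actor:lem:error_h_grad}.}

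The plan is to decompose $e^3_i(m) = h_i(m) - \nabla_{\theta_i} J(\theta(m))$ into two error terms --- a truncation-in-time tail and a truncation-in-neighborhood tail --- and bound each separately using respectively the geometric discount factor and the exponential decay property, then aggregate across $i$. Throughout, write $\ell_i^\theta(a_i\mid s_i) = \nabla_{\theta_i}\log\zeta_i^{\theta_i}(a_i\mid s_i)$, observe that $\nabla_{\theta_i}\log\zeta^\theta(a\mid s) = \ell_i^\theta(a_i\mid s_i)$ because the joint policy factorizes and only the $i$-th factor depends on $\theta_i$, and recall from \eqref{eq:full_q} that $Q^\theta = \frac{1}{n}\sum_{j=1}^n Q_j^\theta$. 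Substituting these into the policy-gradient expression \eqref{appendix:actor:eq:grad_i} and subtracting \eqref{appendix:actor:eq:h_i} yields
\[
e^3_i(m) = -\sum_{t=0}^{T}\gamma^t\,\E\!\left[\tfrac{1}{n}\!\!\sum_{j\notin \nik}\!Q_j^{\theta(m)}(s,a)\,\ell_i^{\theta(m)}(a_i\mid s_i)\right] - \sum_{t=T+1}^{\infty}\gamma^t\,\E\!\left[\tfrac{1}{n}\!\sum_{j=1}^{n}Q_j^{\theta(m)}(s,a)\,\ell_i^{\theta(m)}(a_i\mid s_i)\right].
\]

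For the tail-in-$t$ term, I would use the uniform bounds $\|Q_j^\theta\|_\infty\leq \bar r/(1-\gamma)$ and $\|\ell_i^\theta\|\leq L_i$ (Assumption~\ref{assump:bounded_reward} and~\ref{assump:gradient_bounded}), yielding a bound of $\bar{r}L_i \gamma^{T+1}/(1-\gamma)^2$. The hypothesis $T+1\geq [\log(c(1-\gamma)/\bar r)+(\khop+1)\log\rho]/\log\gamma$ (with $\log\gamma<0$) gives $\gamma^{T+1}\leq c(1-\gamma)\rho^{\khop+1}/\bar r$, so this term is at most $c L_i\rho^{\khop+1}/(1-\gamma)\leq cL_i\rhok/(1-\gamma)$.

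For the far-neighborhood term, the key observation is the standard score-function identity $\sum_{a_i}\nabla_{\theta_i}\zeta_i^{\theta_i}(a_i\mid s_i) = 0$. Since $\zeta^\theta(a\mid s) = \prod_k\zeta_k^{\theta_k}(a_k\mid s_k)$ factorizes, conditional on $s$ the actions $a_i$ and $a_{-i}$ are independent, so I can write, for each $j\notin\nik$ and each fixed state $s$,
\[
\E_{a\sim\zeta^\theta(\cdot\mid s)}\!\left[Q_j^\theta(s,a)\,\ell_i^\theta(a_i\mid s_i)\right] \;=\; \sum_{a_i}\nabla_{\theta_i}\zeta_i^{\theta_i}(a_i\mid s_i)\,\E_{a_{-i}}\!\left[Q_j^\theta(s,(a_i,a_{-i}))\right].
\]
Now pick any reference action $a_i^\star$ and subtract the constant $\E_{a_{-i}}[Q_j^\theta(s,(a_i^\star,a_{-i}))]$ inside the sum over $a_i$; this changes nothing by the score identity. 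For $j\notin\nik$ the undirected-graph symmetry gives $i\notin\njk$, so by the exponential-decay Assumption~\ref{assump:exp_decaying}, replacing $a_i$ (which lies in $\nminusjk$) by $a_i^\star$ changes $Q_j^\theta$ by at most $c\rhok$. Using $\|\nabla_{\theta_i}\zeta_i^{\theta_i}(a_i\mid s_i)\| = \zeta_i^{\theta_i}(a_i\mid s_i)\|\ell_i^\theta(a_i\mid s_i)\|\leq L_i\zeta_i^{\theta_i}(a_i\mid s_i)$, summing over $a_i$ yields total mass $L_i$, so each expectation is bounded by $cL_i\rhok$. Summing the $\gamma^t$-weighted expectations over $t\in\{0,\dots,T\}$ (bounded by $1/(1-\gamma)$) and over $j\notin\nik$ (at most $n$ terms, divided by $n$) produces a bound of $cL_i\rhok/(1-\gamma)$.

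Combining the two terms gives $\|e^3_i(m)\|\leq 2cL_i\rhok/(1-\gamma)$, and stacking over $i$ and using $L=\sqrt{\sum_i L_i^2}$ yields $\|e^3(m)\|\leq 2cL\rhok/(1-\gamma)$, the claimed bound. The main step to get right is the exponential-decay application: one must carefully verify that $j\notin\nik \Leftrightarrow i\notin\njk$ on an undirected graph, then use the score-function trick to introduce a reference action so that Definition~\ref{def:exp_decaying} can be invoked on the difference rather than on $Q_j^\theta$ itself. The rest is bookkeeping with geometric series and the norm-stacking argument.
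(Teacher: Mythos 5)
Your proof is correct and follows essentially the same route as the paper's: the same split into a discounted time tail (killed by the condition on $T$) and a far-neighborhood term over $j\notin\nik$, the same score-function cancellation of an $a_i$-independent reference, and the same invocation of the exponential decay property on the residual. The only cosmetic difference is that you subtract $\E_{a_{-i}}\big[Q_j^{\theta}(s,(a_i^\star,a_{-i}))\big]$ for a fixed reference action, whereas the paper subtracts a truncated $Q$-function $\hat{Q}_j^{\theta(m)}(s_{\njk},a_{\njk})$; both are constant in $a_i$ and both lie within $c\rhok$ of the true $Q_j^{\theta}$, so the resulting bounds coincide.
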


\noindent\textbf{Combining the bounds and proof of Theorem~\ref{thm:main}.} With the above bounds on $e^1(m)$, $e^2(m)$ and $e^3(m)$, we are now ready to prove the main result Theorem~\ref{thm:main}. Since $\nabla J(\theta)$ is $L'$ Lipschitz continuous, we have
    \begin{align}
        J(\theta(m+1)) &\geq J(\theta(m)) + \langle \nabla J(\theta(m)), \theta(m+1) - \theta(m)\rangle - \frac{L'}{2}\Vert \theta(m+1) - \theta(m)\Vert^2 \nonumber\\
        &= J(\theta(m)) + \eta_m  \langle \nabla J(\theta(m)), \hat g(m)\rangle - \frac{L' \eta_m^2}{2} \Vert \hat{g}(m)\Vert^2 .\label{eq:actor:J_recursive}
    \end{align}
    Using the decomposition of $\hat{g}(m)$ in \eqref{appendix:actor:eq:grad_err_decomposition}, we get, 
    $$\Vert \hat{g}(m) \Vert^2 \leq 4 \Vert e^1(m)\Vert^2 + 4\Vert e^2(m)\Vert^2 + 4\Vert e^3(m)\Vert^2 + 4\Vert \nabla J(\theta(m))\Vert^2.$$
    Further, we can bound $\langle \nabla J(\theta(m)),\hat{g}(m)\rangle$,
\begin{align*}
    \langle \nabla J(\theta(m)), \hat g(m)\rangle  &= \Vert \nabla J(\theta(m))\Vert^2 + \langle \nabla J(\theta(m)), e^1(m) + e^2(m) + e^3(m)\rangle \\
    &\geq  \Vert \nabla J(\theta(m))\Vert^2 + \langle \nabla J(\theta(m)), e^2(m) \rangle - \Vert \nabla J(\theta(m))\Vert (\Vert e^1(m)\Vert  + \Vert e^3(m)\Vert).
\end{align*}
Plugging the above bounds on $\Vert\hat{g}(m)\Vert^2$ and $\langle \nabla J(\theta(m)),\hat{g}(m)\rangle$ into \eqref{eq:actor:J_recursive}, we have,  
    \begin{align}
        J(\theta(m+1)) \geq J(\theta(m)) + (\eta_m - 2L' \eta_m^2)\Vert\nabla J(\theta(m))\Vert^2 + \eta_m \varepsilon_{m,0} - \eta_m \varepsilon_{m,1} -\eta_{m}^2 \varepsilon_{m,2}, \label{eq:actor:J_recursive_2}
    \end{align}
where $\varepsilon_{m,0}= \langle \nabla J(\theta(m)), e^2(m) \rangle$,
    $\varepsilon_{m,1} = \Vert \nabla J(\theta(m))\Vert (\Vert e^1(m)\Vert  + \Vert e^3(m)\Vert)$, $
    \varepsilon_{m,2} = 2L'( \Vert e^1(m)\Vert^2 + \Vert e^2(m)\Vert^2 + \Vert e^3(m)\Vert^2)$.
    Doing a telescope sum for \eqref{eq:actor:J_recursive_2}, we get
    \begin{align}
        J(\theta(M)) & \geq J(\theta(0)) + \sum_{m=0}^{M-1} (\eta_m - 2L' \eta_m^2)\Vert\nabla J(\theta(m))\Vert^2 +  \sum_{m=0}^{M-1} \eta_m \varepsilon_{m,0} -  \sum_{m=0}^{M-1}\eta_m \varepsilon_{m,1} - \sum_{m=0}^{M-1}\eta_{m}^2 \varepsilon_{m,2} \nonumber\\
        &\geq  J(\theta(0)) + \sum_{m=0}^{M-1} \frac{1}{2}\eta_m\Vert\nabla J(\theta(m))\Vert^2 +  \sum_{m=0}^{M-1} \eta_m \varepsilon_{m,0} -  \sum_{m=0}^{M-1}\eta_m \varepsilon_{m,1} - \sum_{m=0}^{M-1}\eta_{m}^2 \varepsilon_{m,2},
    \end{align} where we have used $\eta_m - 2L'\eta_m^2 = \eta_m(1 - 2L'\eta_m) \geq \frac{1}{2} \eta_m$, which is true because $\eta_m\leq \eta \leq \frac{1}{4L'}$. 
    After rearranging, we get
    \begin{align}
        \sum_{m=0}^{M-1} \frac{1}{2}\eta_m\Vert\nabla J(\theta(m))\Vert^2  \leq J(\theta(M))  - J(\theta(0)) -\sum_{m=0}^{M-1} \eta_m \varepsilon_{m,0} +  \sum_{m=0}^{M-1}\eta_m \varepsilon_{m,1} + \sum_{m=0}^{M-1}\eta_{m}^2 \varepsilon_{m,2} . \label{appendix:actor:eq:sum_gradient_square}
    \end{align}

We now apply our bounds on $e^1(m),e^2(m),e^3(m)$. By Lemma~\ref{appendix:actor:lem:martingale_bound}, we have with probability $1-\frac{\delta}{2}$,
\begin{align}
        \Big| \sum_{m=0}^{M-1} \eta_m \varepsilon_{m,0} \Big| \leq \frac{2\bar{r}^2 L^2}{(1-\gamma)^4} \sqrt{2\sum_{m=0}^{M-1} \eta_m^2 \log\frac{4}{\delta} } . \label{appendix:actor:eq:epsilon_0}
\end{align}
By Lemma~\ref{appendix:actor:lem:e1_bound} and Lemma~\ref{appendix:actor:lem:error_h_grad}, we have with probability $1-\frac{\delta}{2}$,
\begin{align}
   \sup_{m\leq M-1} \varepsilon_{m,1} &\leq \frac{\bar{r} L}{(1-\gamma)^2} (\sup_{m\leq M-1} \Vert e^1(m)\Vert + \sup_{m\leq M-1} \Vert e^3(m)\Vert ) \leq  \frac{\bar{r} L}{(1-\gamma)^2} (\frac{4c L\rhok}{(1-\gamma)^3}+ 2 \frac{L c }{(1-\gamma)}\rhok )\nonumber  \\
   &\leq \frac{6 L^2 c\bar{r}}{(1-\gamma)^5} \rhok . \label{appendix:actor:eq:epsilon_1}
\end{align}    
By Lemma~\ref{appendix:actor:lem:upperbound}, we have almost surely, $\max(\Vert e^1(m)\Vert, \Vert e^2(m)\Vert, \Vert e^3(m)\Vert) \leq 2\frac{\bar{r} L}{(1-\gamma)^2}$, and hence,
\begin{align}
  \sup_{m\leq M-1}  \varepsilon_{m,2} &= 2L'( \Vert e^1(m)\Vert^2 + \Vert e^2(m)\Vert^2 + \Vert e^3(m)\Vert^2)  \leq \frac{24 \bar{r}^2 L' L^2}{(1-\gamma)^4}. \label{appendix:actor:eq:epsilon_2}
\end{align}
Using a union bound, we have with probability $1-\delta$, all three events \eqref{appendix:actor:eq:epsilon_0}, \eqref{appendix:actor:eq:epsilon_1} and \eqref{appendix:actor:eq:epsilon_2} hold, which when combined with \eqref{appendix:actor:eq:sum_gradient_square} implies 
\begin{align}
        &\frac{\sum_{m=0}^{M-1} \eta_m\Vert\nabla J(\theta(m))\Vert^2}{\sum_{m=0}^{M-1} \eta_m } \nonumber\\
        &\leq \frac{2(J(\theta(M))  - J(\theta(0))) + 2\bigg|\sum_{m=0}^{M-1} \eta_m \varepsilon_{m,0} \bigg| +  2\sup_{m\leq M-1}\varepsilon_{m,2}\sum_{m=0}^{M-1}
        \eta_{m}^2}{\sum_{m=0}^{M-1} \eta_m } +2 \sup_{m\leq M-1} \varepsilon_{m,1}   \nonumber\\
        &\leq \frac{2(J(\theta(M))  - J(\theta(0)))+ \frac{4\bar{r}^2 L^2}{(1-\gamma)^4} \sqrt{2\sum_{m=0}^{M-1} \eta_m^2 \log\frac{4}{\delta} }  + \frac{48 \bar{r}^2 L' L^2}{(1-\gamma)^4}\sum_{m=0}^{M-1}\eta_{m}^2}{\sum_{m=0}^{M-1}\eta_m}   + \frac{12 L^2 c\bar{r}}{(1-\gamma)^5} \rhok .
    \end{align}    
    Since $\eta_m= \frac{\eta}{\sqrt{m+1}}$, we have, 
$\sum_{m=0}^{M-1} \eta_m > 2\eta (\sqrt{M+1}-1) \geq \eta \sqrt{M+1}$
and
$\sum_{m=0}^{M-1} \eta_m^2< \eta^2 (1 + \log(M))<2 \eta^2\log(M)$ (using $M\geq 3$). Further we use the bound $J(\theta(M)) \leq \frac{\bar{r}}{1-\gamma}$ and $J(\theta(0))\geq 0$ almost surely. Combining these results, we get with probability $1-\delta$,  
     \begin{align}
 \frac{\sum_{m=0}^{M-1} \eta_m\Vert\nabla J(\theta(m))\Vert^2}{\sum_{m=0}^{M-1} \eta_m } \leq \frac{\frac{2\bar{r}}{\eta(1-\gamma)}+ \frac{8\bar{r}^2 L^2}{(1-\gamma)^4} \sqrt{\log M \log\frac{4}{\delta} }  + \frac{96 \bar{r}^2 L' L^2}{(1-\gamma)^4}\eta \log M}{ \sqrt{M+1}} + \frac{12 L^2 c\bar{r}}{(1-\gamma)^5} \rhok .\nonumber
 \end{align}    
 This concludes the proof of Theorem~\ref{thm:main}.
\qed

\section{Numerical Studies} \label{sec:numerical}
In this section, we first conduct numerical studies in Section~\ref{sec:numerical_synthetic} using a synthetic example where the optimal solution is known in closed form to verify our theoretic results. Then, in Section~\ref{sec:numerical_communication}, we demonstrate our approach using the wireless communication example introduced in Section~\ref{subsec:examples}.

\subsection{Synthetic Experiments} \label{sec:numerical_synthetic}
We first study a synthetic example where the interaction graph is a line of $n$ nodes $\mathcal{N} = \{1,2,\ldots,n\}$ with the left most node labeled as $1$ and the right most as $n$. Each node has a binary local state space and local action space $\mathcal{S}_i = \mathcal{A}_i = \{0,1\}$. 
The left most node (node $1$) has a reward $1$ whenever $s_1=1$, and all other reward values of node $1$ and the rewards of all other nodes are $0$. Further, $s_1(t+1) = 1$ with probability $1$ when the second node has state $s_2(t) = 1$, and in all other cases $s_1(t+1) = 0$ with probability $1$. For the $i$'th node with $2\leq i \leq n -1$,
\[P(s_i(t+1) = 1|s_{i-1}(t),s_i(t),s_{i+1}(t),a_i(t) ) = \left\{\begin{array}{ll}
    1, & \text{ if }s_{i+1}(t) = 1, a_i(t) = 1 ,  \\
    0.8, & \text{ if } s_{i+1}(t) = 0, a_i(t) = 1, \\
    0, & \text{ all other cases}.
\end{array} \right. \]
For the last node $i=n$, $s_n(t+1) = 1$ w.p. $1$ when $a_n(t) = 1$, and $s_n(t+1) = 0$ w.p. $1$ when $a_n(t) = 0.$ The initial states of all nodes are $s_i(0) = 1$. 

In this example, the rewards and transitions are designed in a way so that the optimal policy can be determined explicitly. Only agent $1$ has a non-zero reward when its state is $s_1 = 1$, and it will stay in state $s_1 = 1$ only when $s_2 = 1$, which happens with high probability when $a_2 = 1$ and $s_3 = 1$, and so on so forth. Therefore, it is clear that the optimal policy is for all nodes to always take action $a_i(t) = 1$ regardless of the local state, in which case the states of all agents will stay as $s_i(t) = 1$, and the resulting optimal global discounted reward is $\frac{1}{n}\frac{1}{1-\gamma}$. 

In our experiments, we set $n = 8$, $\gamma = 0.7$. We use the softmax policy for the localized policies, which is a standard policy parameterization that encompasses all stochastic policies from $\mathcal{S}_i$ to $\mathcal{A}_i$ \citep{sutton1998introduction}. We run the SAC algorithm with $\kappa = 0$ up to $\kappa = 7$. We plot the global discounted reward throughout the training process in Figure~\ref{fig:synthetic_training}, and we also plot the optimality gap in Figure~\ref{fig:synthetic_exp_decay} computed as the difference of the optimal discounted reward ($\frac{1}{n}\frac{1}{1-\gamma}$) and the discounted reward achieved by the algorithm with the respective $\khop$ values. Figure~\ref{fig:synthetic_training} shows that when $\kappa$ increases, the performance of the algorithm also increases, though the improvement becomes small when $\kappa>1$. This is also confirmed by Figure~\ref{fig:synthetic_exp_decay}, which shows the optimality gap is decaying exponentially when $\kappa$ increases. This is consistent with our theoretic result in Theorem~\ref{thm:main}. We note that the exponential decay appears to stop in Figure~\ref{fig:synthetic_exp_decay} when $\khop>4$, and this is due to the increasing complexity in training for large $\khop$ as discussed in Section~\ref{subsec:convergence}, and may also be due to the fact that any further potential improvement may be too small (in the order of $1\times 10^{-3}$) to be noticed because of the noise. 
\begin{figure}
  \begin{minipage}[t]{0.5\textwidth}
    \centering
    \includegraphics[width = \textwidth]{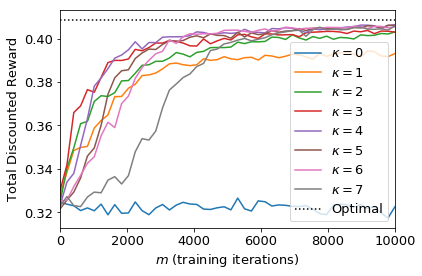}
    \caption{Global discounted reward during the training process in the synthetic experiments.}
    \label{fig:synthetic_training}
  \end{minipage}~
  \begin{minipage}[t]{0.5\textwidth}
    \centering
    \includegraphics[width = \textwidth]{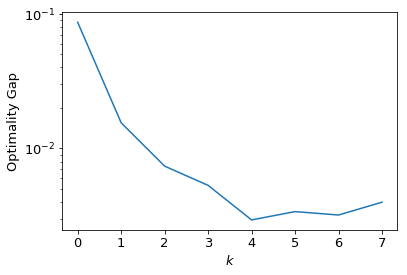}
    \caption{Optimality gap as function of $\kappa$ in the synthetic experiments. }
    \label{fig:synthetic_exp_decay}
  \end{minipage}
\end{figure}

\subsection{Multi-Access Wireless Communication}\label{sec:numerical_communication}
We next study the multi-access wireless communication example discussed in Section~\ref{subsec:examples}. We consider a grid of users in Figure~\ref{fig:communication_grid}, where each user has access points on the corners of the block it is in. 
In the experiments, we set the grid size as $6\times 6$, deadline as $d_i = 2$, and all parameters $p_i$ (packet arrival probability for user $i$) and $q_k$ (success transmission probability for access point $y_k$) are generated uniformly random from $[0,1]$. We set $\gamma = 0.7$ and the initial state to be uniformly random, and run the SAC algorithm with $\khop=0,1,2$ to learn a localized softmax policy, starting from an initial policy where the action is chosen uniformly random. 
We compare the proposed method with a benchmark based on the localized ALOHA protocol \citep{aloha}, where each user has a certain probability of sending the earliest packet and otherwise not sending at all. 
When it sends, it sends the packet to a random access point in its available set, with probability proportional to the success transmission probability of this access point and inverse proportional to the number of users that share this access point.
The results are shown in Figure~\ref{fig:comm6by6k02}. It shows that the proposed algorithm can outperform the ALOHA based benchmark, despite the proposed algorithm does not have access to the transmission probability $q_k$ which the benchmark has access to. It also shows that the SAC with $\khop = 1,2$ outperforms $\khop =0$, which as we mentioned in Section~\ref{subsec:algo} corresponds to the independent learner approach in the literature \citep{tan1993multi,lowe2017multi}. SAC with $\khop = 2$ outperforms $\khop = 1$, but the improvement is small, which is consistent with the results in the synthetic experiments in Section~\ref{sec:numerical_synthetic}. We also study a case with the same $6\times 6$ grid of access points, but the $36$ users are assigned randomly to the square blocks in the grid. We plot the results in Figure~\ref{fig:communication_grid_random}. Similar phenomenons can be observed in Figure~\ref{fig:communication_grid_random} as in Figure~\ref{fig:comm6by6k02}, with SAC outperforms the benchmark and SAC with $\kappa=1,2$ outperforms $\kappa = 0$, the independent learner approach.  

\begin{figure}[t]
    \centering
    \includegraphics[width = .6\textwidth]{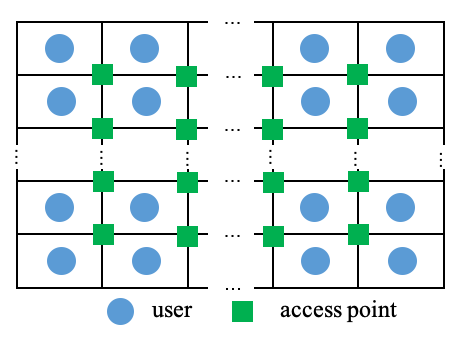}
    \caption{Grid of users and access points.}
    \label{fig:communication_grid}
\end{figure}

\begin{figure}[t]
  \begin{minipage}[t]{0.5\textwidth}
    \centering
    \includegraphics[width = \textwidth]{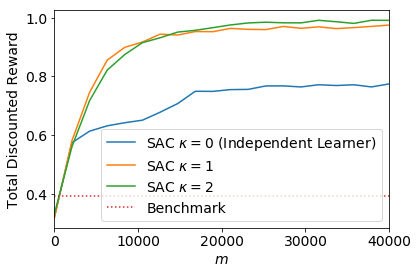}
    \caption{Global discounted reward during training. }
    \label{fig:comm6by6k02}
  \end{minipage}~
  \begin{minipage}[t]{0.5\textwidth}
    \centering
    \includegraphics[width = \textwidth]{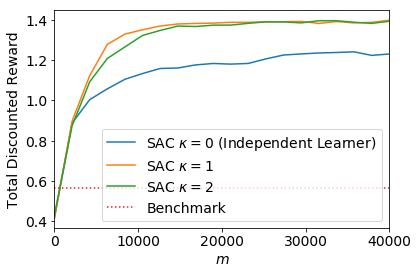}
    \caption{Global discounted reward during training when user locations are assigned randomly.}
    \label{fig:communication_grid_random}
  \end{minipage}~
\end{figure}

\section{Concluding Remarks and Extensions}
This paper proposes a \algoname\ algorithm that provably finds a close-to-stationary point of $J(\theta)$ in time that scales with the local state-action space size of the largest $\khop$-hop neighborhood, which can be much smaller than the full state-action space size when the graph is sparse. This represents the first scalable RL method for localized control of multi-agent networked systems with such a provable guarantee. There are many possible extensions and open questions, which we discuss below.  

\textbf{Average Reward.} One extension is to consider average reward instead of discounted reward, i.e., to consider 
$$ J_{\mathrm{ave}}(\theta) = \lim_{T\rightarrow\infty} \frac{1}{T}\E_{s\sim \pi_0} \E_{a(t)\sim \zeta^\theta(\cdot|s(t))} \Big[\sum_{t=0}^{T-1} r(s(t),a(t)) | s(0) = s \Big]. $$
Under appropriate ergodicity assumptions, the average reward above is equivalent to the reward under the stationary distribution. Average reward is common in applications where the performance is measured in stationarity, e.g. throughput or waiting time in communication and queueing networks. Despite the importance in applications, average reward RL is known to be more challenging even in single-agent settings, see e.g. \cite{tsitsiklis1999average,tsitsiklis2002average}. For example, the $Q$ function needs to be defined in a different way, 
$$Q^\theta(s,a) = \E_{a(t)\sim \zeta^\theta(\cdot|s(t))} \Big[\sum_{t=0}^\infty ( r(s(t),a(t)) -  J_{\mathrm{ave}} (\theta))| s(0) = s , a(0) = a\Big] , $$
and because of the lack of a discounting factor $\gamma$, the associated Bellman operator no longer has $\gamma$ as the natural contraction factor. 
In ongoing work summarized in \cite{qu2020scalable}, we have begun to study an average reward multi-agent RL setting with the same local interaction structure \eqref{eq:transition_factor} as in this paper. While similar exponential decay properties on the $Q$-functions can be defined, in the average reward setting, \cite{qu2020scalable} shows that the exponential decay only holds under a form of bounded interaction strength assumption.  Under this assumption, \cite{qu2020scalable} proposes a variant of the SAC algorithm that achieves similar guarantees as the algorithm in this paper. However, the bounded interaction strength assumption in \cite{qu2020scalable} may be restrictive, and searching for weaker assumptions that guarantee the exponential decay (or weaker forms of decay) is an interesting open future direction. 

\textbf{Time-varying dependence structure. } In this paper, the interaction graph that defines the dependence structure \eqref{eq:transition_factor} is a fixed graph, while in many real world applications the graph is time-varying. In other words, if the interaction graph at time $t$ is $\mathcal{G}_t$, then \eqref{eq:transition_factor} becomes, 
\[P(s(t+1)|s(t),a(t)) = \prod_{i=1}^n P_i(s_i(t+1)| s_{N_i(\mathcal{G}_t)}(t),a_i(t)), \]
where $N_i(\mathcal{G}_t)$ means the set of neighbors of $i$ in graph $\mathcal{G}_t$. 
In another of our extensions of this work \citep{lin2020multiagent}, we study a stochastic graph setting where, at each time step, the graph $\mathcal{G}_t$ is sampled from a fixed graph distribution in which each link is present with probability exponentially decreasing in a predefined distance measure between the two nodes. \citet{lin2020multiagent} shows that a weaker form of exponential decay holds in this setting, termed $\mu$-decay, and a similar SAC algorithm can be used. Beyond \citet{lin2020multiagent}, an open question is how to handle the case where $\mathcal{G}_t$ can be arbitrarily chosen, rather than stochastically sampled.  An issue in this setting is that the transition kernel may not be time-homogeneous anymore, which means that is a challenging open problem.

\textbf{Other future directions.} While \algoname\ is an actor critic based algorithm, the idea underlying \algoname, including the exponential decay property and the truncated $Q$-function \eqref{eq:truncated_q}, is a contribution in its own right, and can potentially lead to other scalable RL methods for networked systems. For example, within the actor critic framework, one idea is to change the critic to variants like TD-$\lambda$, change the actor to include the advantage function, or use the simultaneous update version of the actor critic algorithm (as opposed to the inner-loop structure used in this paper). Beyond the actor critic framework, one can develop policy-iteration type algorithms (e.g. $Q$-learning/SARSA type methods) on our truncated $Q$-functions. 
Further, our proof technique in showing the finite time convergence of TD learning can be of independent interest.
In fact, we have already considered preliminary applications of our technique in the context of Q-learning and TD learning with state aggregation \citep{lin2020multiagent}, and we expect other applications to emerge in the coming years.
Additionally, the setting we consider here does not provide an investigation of the tradeoff between exploration and exploitation.  Adding consideration of this tradeoff is an interesting direction for future work. Finally, other future directions include studying the landscape of our policy optimization problem, and studying the robustness of the trained policies.

%
%
%
\appendix

\section{The Exponential Decay Property}\label{subsec:appendix:exponential_decaying}



Our main results depend on the $(c,\rho)$-exponential decay of the $Q$-function (cf. Definition~\ref{def:exp_decaying}), which means that for any $i$, any $s_{\nik}$, $s_{\nminusik}$ and $s_{\nminusik}'$, $a_{\nik}$, $a_{\nminusik}$ and $a_{\nminusik}'$, 
$$|Q_i^\theta(s_{\nik},s_{\nminusik}, a_{\nik}, a_{\nminusik}) - Q_i^\theta(s_{\nik},s_{\nminusik}',a_{\nik}, a_{\nminusik}')| \leq c\rhok .$$

In Section~\ref{subsec:key_idea}, we have pointed out in Lemma~\ref{lem:exp_decaying} that the $(c,\rho)$-exponential decay property always holds with $\rho\leq \gamma$, assuming the rewards $r_i$ are upper bounded. We now provide the proof of Lemma~\ref{lem:exp_decaying}.

\begin{proof}[Proof of Lemma~\ref{lem:exp_decaying}] We first prove part (a). For notational simplicity, denote $s = (s_{\nik},s_{\nminusik})$, $a = (a_{\nik}, a_{\nminusik})$; $s'= (s_{\nik},s_{\nminusik}')$ and $a' = (a_{\nik}, a_{\nminusik}')$. Let $\pi_{t,i}$ be the distribution of $(s_i(t),a_i(t))$ conditioned on $(s(0),a(0)) = (s,a)$ under policy $\theta$, and let $\pi_{t,i}'$ be the distribution of $(s_i(t),a_i(t))$ conditioned on $(s(0),a(0))=(s',a')$ under policy $\theta$. Then, we must have $\pi_{t,i} = \pi_{t,i}'$ for all $t\leq \khop$. The reason is that, due to the local dependence structure \eqref{eq:transition_factor} and the localized policy structure, $\pi_{t,i}$ only depends on $(s_{N_i^t},a_{N_i^t})$ (the initial state-action of agent $i$'th $t$-hop neighborhood) which is the same as $(s_{N_i^t}',s_{N_i^t}')$ when $t\leq \khop$ per the way the initial state $(s,a)$, $(s',a')$ are chosen. With these definitions, we expand the definition of $Q_i^\theta$ in \eqref{eq:full_q},
\begin{align}
   &|Q_i^\theta(s,a) - Q_i^\theta(s',a')| \nonumber \\ &\leq   \sum_{t=0}^\infty \bigg| \E \big[\gamma^t r_i(s_i(t),a_i(t))  \big|(s(0),a(0)) = (s,a) \big] - \E \big[\gamma^t r_i(s_i(t),a_i(t))  \big|(s(0),a(0)) = (s',a') \big] \bigg| \nonumber \\
  & = \sum_{t=0}^\infty \bigg| \gamma^t \E_{(s_i,a_i)\sim \pi_{t,i}} r_i(s_i,a_i)  -\gamma^t \E_{(s_i,a_i)\sim \pi_{t,i}'} r_i(s_i,a_i) \bigg|\nonumber \\
  &= \sum_{t=\khop+1}^\infty \bigg| \gamma^t \E_{(s_i,a_i)\sim \pi_{t,i}} r_i(s_i,a_i)  -\gamma^t \E_{(s_i,a_i)\sim \pi_{t,i}'} r_i(s_i,a_i) \bigg|\nonumber \\
  &\leq \sum_{t=\khop+1}^\infty  \gamma^t   \bar{r} \text{TV}( \pi_{t,i},\pi_{t,i}')   \leq  \frac{\bar{r}}{1-\gamma}\gamma^{\khop+1}, \label{appendix:exp_decay:eq:tv}
\end{align}
where $ \text{TV}( \pi_{t,i},\pi_{t,i}') $ is the total variation distance between $\pi_{t,i}$ and $\pi_{t,i}'$ which is upper bounded by $1$. The above inequality shows that the $(\frac{\bar{r}}{1-\gamma},\gamma)$-exponential decay property holds and concludes the proof of Lemma~\ref{lem:exp_decaying} (a). 

The proof of part (b) is almost identical to that of part(a). The only change is that in step \eqref{appendix:exp_decay:eq:tv}, we use $\textup{TV}(\pi_{t,i},\pi'_{t,i})\leq 2c'\mu^t$.
\end{proof}

\section{Proof of Lemma~\ref{lem:truncated_pg}}\label{sec:appendix_truncated_pg}

We first show part (a), the truncated $Q$ function is a good approximation of the true $Q$ function. To see that, we have for any $(s,a)\in\mathcal{S}\times \mathcal{A}$, by \eqref{eq:truncated_q} and \eqref{eq:truncated_q_weights}, 
\begin{align}
&|\hat{Q}_i^\theta (s_{\nik}, a_{\nik}) - Q_i^\theta(s,a)| \nonumber  \\
&= \Big| \sum_{s_{\nminusik}', a_{\nminusik}'} w_i(s_{\nminusik}', a_{\nminusik}';s_{\nik}, a_{\nik} ) Q_i^\theta (s_{\nik},s_{\nminusik}', a_{\nik},a_{\nminusik}') - Q_i^\theta(s_{\nik},s_{\nminusik},a_{\nik},a_{\nminusik}) \Big|\nonumber \\
&\leq \sum_{s_{\nminusik}', a_{\nminusik}'} w_i(s_{\nminusik}', a_{\nminusik}';s_{\nik}, a_{\nik} ) \Big| Q_i^\theta (s_{\nik},s_{\nminusik}', a_{\nik},a_{\nminusik}') - Q_i^\theta(s_{\nik},s_{\nminusik},a_{\nik},a_{\nminusik}) \Big|\nonumber\\
&\leq c\rhok,\label{appendix:truncated:eq:q_err}
\end{align}
where in the last step, we have used the $(c,\rho)$-exponential decay property, cf. Definition~\ref{def:exp_decaying}. 

Next, we show part (b). Recall by the policy gradient theorem (Lemma~\ref{lem:policy_grad}),
\begin{align*}
    \nabla_{\theta_i} J(\theta) &= \frac{1}{1-\gamma} \E_{s\sim \pi^\theta, a\sim\zeta^\theta(\cdot|s)}\left[ Q^{\theta}(s,a) \nabla_{\theta_i} \log \zeta^\theta (a|s)\right]\\
    &= \frac{1}{1-\gamma} \E_{s\sim \pi^\theta, a\sim\zeta^\theta(\cdot|s)} \left[ Q^{\theta}(s,a) \nabla_{\theta_i} \log \zeta_i^{\theta_i} (a_i|s_i) \right],
\end{align*}
where we have used $\nabla_{\theta_i}\log\zeta^{\theta}(a|s) =\nabla_{\theta_i} \sum_{j\in\mathcal{N}} \log\zeta_j^{\theta_j}(a_j|s_j)=\nabla_{\theta_i}\log\zeta_i^{\theta_i}(a_i|s_i)$ by the localized policy structure. With the above equation, we can compute $\hat{h}_i(\theta) - \nabla_{\theta_i} J(\theta)$,
\begin{align*}
   & \hat{h}_i(\theta) - \nabla_{\theta_i} J(\theta) \\
    & = \frac{1}{1-\gamma} \E_{s\sim \pi^\theta, a\sim\zeta^\theta(\cdot|s)} \left[ \left(\frac{1}{n}\sum_{j\in {\nik}} \hat{Q}_j^{\theta} (s_{\njk},a_{\njk}) -  Q^{\theta}(s,a)\right) \nabla_{\theta_i}  \log \zeta_i^{\theta_i}(a_i|s_i) \right]\\
    &= \frac{1}{1-\gamma} \E_{s\sim \pi^\theta, a\sim\zeta^\theta(\cdot|s)} \left[\left(\frac{1}{n}\sum_{j\in \mathcal{N}} \hat{Q}_j^{\theta} (s_{\njk},a_{\njk}) - \frac{1}{n}\sum_{j\in \mathcal{N}} Q^{\theta}_j(s,a)\right) \nabla_{\theta_i}  \log \zeta_i^{\theta_i} (a_i|s_i) \right]\\
    &\qquad - \frac{1}{1-\gamma} \E_{s\sim \pi^\theta, a\sim\zeta^\theta(\cdot|s)} \left[ \frac{1}{n}\sum_{j\in \nminusik} \hat{Q}_j^{\theta} (s_{\njk},a_{\njk}) \nabla_{\theta_i}  \log \zeta_i^{\theta_i} (a_i|s_i)\right]\\
    &:= E_1 - E_2.
\end{align*}
We claim that $E_2 = 0$. To see this, consider for any $j\in \nminusik$, 
    \begin{align*}
        &\mathbb{E}_{s\sim\pi^\theta, a\sim \zeta^{\theta}(\cdot|s)}  \left[\nabla_{\theta_i}  \log \zeta_i^{\theta_i}(a_i|s_i)    \hat{Q}_j^{\theta}(s_{\njk},a_{\njk})\right]\\
        &= \sum_{s,a} \pi^\theta(s) \prod_{\ell=1}^n \zeta^{\theta_\ell}_\ell(a_\ell|s_\ell) \frac{\nabla_{\theta_i}  \zeta_i^{\theta_i}(a_i|s_i)}{\zeta_i^{\theta_i}(a_i|s_i)} \hat{Q}_j^{\theta}(s_{\njk},a_{\njk})\\
        &= \sum_{s,a} \pi^\theta(s) \prod_{\ell\neq i} \zeta^{\theta_\ell}_\ell(a_\ell|s_\ell) \nabla_{\theta_i}  \zeta_i^{\theta_i}(a_i|s_i)  \hat{Q}_j^{\theta}(s_{\njk},a_{\njk})\\
        &= \sum_{s,a_{1},\ldots,a_{i-1},a_{i+1},\ldots,a_n } \pi^\theta(s) \prod_{\ell\neq i} \zeta^{\theta_\ell}_\ell(a_\ell|s_\ell)  \hat{Q}_j^{\theta}(s_{\njk},a_{\njk}) \sum_{a_i} \nabla_{\theta_i}   \zeta_i^{\theta_i}(a_i|s_i) \\
        & = 0,
    \end{align*}
    where in the last equality, we have used $\hat{Q}_j^{\theta}(s_{\njk},a_{\njk})$ does not depend on $a_i$ as $i\not\in \njk$; and $\sum_{a_i} \nabla_{\theta_i}   \zeta_i^{\theta_i}(a_i|s_i) =  \nabla_{\theta_i}   \sum_{a_i} \zeta_i^{\theta_i}(a_i|s_i) =\nabla_{\theta_i} 1 = 0 $. Now that we have shown $E_2=0$, we can bound $E_1$ as follows
    \begin{align*}
       & \Vert\hat{h}_i(\theta) - \nabla_{\theta_i} J(\theta)\Vert = \Vert E_1\Vert \\
       & \leq \frac{1}{1-\gamma} \E_{s\sim \pi^\theta, a\sim\zeta^\theta(\cdot|s)} \left[\frac{1}{n} \sum_{j\in \mathcal{N}} \Big|\hat{Q}_j^{\theta} (s_{\njk},a_{\njk}) - Q^{\theta}_j(s,a)\Big| \Vert \nabla_{\theta_i}  \log \zeta_i^{\theta_i} (a_i|s_i)\Vert\right]\\
       &\leq \frac{1}{1-\gamma} c\rhok L_i,
    \end{align*}
where in the last step, we have used \eqref{appendix:truncated:eq:q_err} and the upper bound $\Vert \nabla_{\theta_i}  \log \zeta_i^{\theta_i} (a_i|s_i)\Vert \leq L_i$. This concludes the proof of Lemma~\ref{lem:truncated_pg}.
\qed
\section{Proof of Auxiliary Results for the Analysis of the Critic}\label{sec:appendix_critic}

\subsection{Proof of Lemma~\ref{lem:critic:barab}}\label{subsec:proof_barab}
The goal of Lemma~\ref{lem:critic:barab} is to understand $\bar{A}_{t-1} \hat{Q}_i^{t-1} + \bar{b}_{t-1}$. Recall that $d_{t-1}$ is the distribution of $z(t-1)$ conditioned on $\mathcal{F}_{t-\tau}$ and $
    \bar{A}_{t-1} = \E [A_{t-1} | \mathcal{F}_{t-\tau} ]= \E [A_{z(t-1),z(t)} | \mathcal{F}_{t-\tau}]$, $ \bar{b}_{t-1} = \E [b_{t-1} | \mathcal{F}_{t-\tau} ]= \E [b_{z(t-1)} | \mathcal{F}_{t-\tau}]$. 

Recall that $P$ is the transition matrix from $z(t-1)$ to $z(t)$. Given any distribution $d$ on the state-action space $\mathcal{Z}$, we define $\tilde{A}_z = \E_{z'\sim P(\cdot|z) }A_{z,z'}$ and 
$\bar{A}^d = \E_{z\sim d} \tilde{A}_z$, $\bar{b}^d    =\E_{z\sim d} b_z$. Then, $\bar{A}_{t-1}$ and $\bar{b}_{t-1}$ can be rewritten as $\bar{A}_{t-1} = \bar{A}^{d_{t-1}}$, $\bar{b}_{t-1} = \bar{b}^{d_{t-1}}$. In what follows, we provide characterizations of $\bar{A}^d$, $\bar{b}^d$ for general distributions $d$. 

Firstly, $\tilde{A}_z$ is given by,
\begin{align}
    \tilde{A}_z = \E_{z'\sim P(\cdot|z) }A_{z,z'} = \mathbf{e}_{z_{\nik}} [ \gamma   P(\cdot|z)\Phi - \mathbf{e}_{z_{\nik}}^T ] ,
\end{align}
where $P(\cdot|z)$ is understood as the $z$'th row of $P$ and is treated as a row vector. Also, we have defined $\Phi\in \R^{\mathcal{Z}\times \mathcal{Z}_{\nik}}$ to be a matrix with each row indexed by $z\in \mathcal{Z}$ and each column indexed by $z_{\nik}' \in   \mathcal{Z}_{\nik}$. Further, the $z$'th row of $\Phi$ is the indicator vector $\mathbf{e}_{z_{\nik}}^\top$, in other words $\Phi(z,z_{\nik}')=1$ if $z_{\nik}' =z_{\nik}  $ and  $\Phi(z,z_{\nik}')=0$ elsewhere. 

Then, $\bar{A}^d$, $\bar{b}^d$ are given by,
\begin{align}
    \bar{A}^d &= \E_{z\sim d} \tilde{A}_z   = \sum_{z\in\mathcal{Z}}d(z) \mathbf{e}_{z_{\nik}} [ \gamma   P(\cdot|z)\Phi - \mathbf{e}_{z_{\nik}}^\top ]  = \Phi^\top \diag(d) \big[\gamma P\Phi - \Phi \big], \label{appendix:critic:eq:bar_a} \\
 \bar{b}^d    &=\E_{z\sim d} b_z = \Phi^\top\diag(d) r_i, \label{appendix:critic:eq:bar_b}
\end{align}
where $\diag(d)\in\R^{\mathcal{Z}\times\mathcal{Z}}$ is a diagonal matrix with the $z$'th diagonal entry being $d(z)$; in the last equation, $r_i$ is understood as a vector over the entire state-action space $\mathcal{Z}$, though it only depends on $z_i$.

With the above characterizations, we show the following property of $\bar{A}^d$ and $ \bar{b}^d$ in Lemma~\ref{appendix:critic:lem:ab}. Lemma~\ref{appendix:critic:lem:ab} (with $d$ set as $d_{t-1}$) will directly lead to the results in Lemma \ref{lem:critic:barab}, with $D_{t-1}$ being the $D$ in Lemma~\ref{appendix:critic:lem:ab} and it satisfies $D_{t-1} \succeq \sigma I$ due to Assumption~\ref{assump:local_explor}; $g_{t-1}$ and $\hat{Q}_i^{*,t-1}$ being $g^{d_{t-1}}$ and $\hat{Q}_i^{d_{t-1}}$ in Lemma~\ref{appendix:critic:lem:ab}.  

\begin{lemma}\label{appendix:critic:lem:ab}
Given distribution $d$ on state-action pair $z$ whose marginalization onto $z_{\nik}$  is non-zero for every $z_{\nik}$, we have
 $\bar{A}^d \hat{Q}_i + \bar{b}^d $ can be written as
    $$\bar{A}^d \hat{Q}_i + \bar{b}^d = -D \hat{Q}_i + D g^d( \hat{Q}_i),$$
    where $D = \Phi^\top \diag(d)\Phi \in \mathbb{R}^{\mathcal{Z}_{\nik}\times \mathcal{Z}_{\nik}}$ is a diagonal matrix, with the $z_{\nik}$'th entry being the marginalized distribution of $z_{\nik}$ under distribution $d$; $g^d(\cdot)$ is given by $g^d( \hat{Q}_i)=\Pi^d \td\Phi \hat{Q}_i$, where $\Pi^d = (\Phi^\top\diag(d)\Phi)^{-1}\Phi^\top\diag(d)$ and $\td(Q_i)= r_i + \gamma P Q_i$ is the Bellman operator in \eqref{appendix:critic:eq:bellman}.
    
    Further, $g^d(\cdot)$ is $\gamma$ contractive in infinity norm, and has a unique fixed point $\hat{Q}_i^d\in\R^{\mathcal{Z}_{\nik}}$ depending on $d$, and the fixed point satisfies 
    \begin{align}
        \sup_{z\in\mathcal{Z}} |\hat{Q}_i^{d} (z_{\nik}) - Q_i^*(z)| =  \Vert \Phi \hat{Q}_i^{d} - Q_i^*\Vert_\infty \leq \frac{ c \rhok}{1-\gamma}. \label{appendix:critic:eq:fixed_point_error}
    \end{align}
\end{lemma}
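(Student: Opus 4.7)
The plan is to verify the algebraic identity first, then analyze $g^d$ as a composition of a $\gamma$-contraction with a nonexpansive projection, and finally use the $(c,\rho)$-exponential decay property to bound the fixed-point approximation error.

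First, using the explicit expressions \eqref{appendix:critic:eq:bar_a} and \eqref{appendix:critic:eq:bar_b},
$$\bar{A}^d \hat{Q}_i + \bar{b}^d = \Phi^\top \diag(d)\big[\gamma P\Phi \hat{Q}_i - \Phi \hat{Q}_i + r_i\big] = \Phi^\top\diag(d)\,\td(\Phi \hat{Q}_i) - \Phi^\top\diag(d)\Phi \,\hat{Q}_i.$$
Define $D := \Phi^\top \diag(d)\Phi$. A direct calculation using $\Phi(z,z_{\nik}') = \mathbf{1}[z_{\nik}(z) = z_{\nik}']$ shows that $D$ is diagonal and that its $z_{\nik}$-th entry equals $\bar{d}(z_{\nik}) = \sum_{z_{\nminusik}} d(z_{\nik},z_{\nminusik})$. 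Since $\bar{d}(z_{\nik})>0$ for every $z_{\nik}$ by assumption, $D$ is invertible, so we may rewrite $\Phi^\top\diag(d)\td(\Phi \hat{Q}_i) = D \Pi^d \td \Phi \hat{Q}_i = D g^d(\hat{Q}_i)$, establishing the first claim.

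Next I show $g^d$ is a $\gamma$-contraction in $\Vert\cdot\Vert_\infty$. Observe that for any $w\in\R^{\mathcal{Z}}$,
$$(\Pi^d w)(z_{\nik}) = \frac{1}{\bar{d}(z_{\nik})}\sum_{z_{\nminusik}} d(z_{\nik},z_{\nminusik}) \,w(z_{\nik},z_{\nminusik}),$$
a convex combination, so $\Pi^d$ is nonexpansive: $\Vert \Pi^d w\Vert_\infty \leq \Vert w\Vert_\infty$. Because the Bellman operator $\td$ is a $\gamma$-contraction in $\Vert\cdot\Vert_\infty$ and $\Phi$ is an isometry in the sense $\Vert \Phi v\Vert_\infty = \Vert v\Vert_\infty$, we obtain
$$\Vert g^d(u) - g^d(v)\Vert_\infty \leq \Vert \td\Phi u - \td\Phi v\Vert_\infty \leq \gamma \Vert \Phi(u-v)\Vert_\infty = \gamma \Vert u - v\Vert_\infty.$$
By Banach's fixed-point theorem, $g^d$ has a unique fixed point $\hat{Q}_i^d$.

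Finally, to bound $\Vert \Phi\hat{Q}_i^d - Q_i^*\Vert_\infty$ I will use $Q_i^* = \td Q_i^*$ and $\Phi \hat{Q}_i^d = \Phi\Pi^d \td \Phi \hat{Q}_i^d$ together with a triangle inequality:
\begin{align*}
\Phi \hat{Q}_i^d - Q_i^* &= \Phi\Pi^d \td\Phi\hat{Q}_i^d - \Phi\Pi^d \td Q_i^* + \Phi\Pi^d Q_i^* - Q_i^*.
\end{align*}
Since $\Phi\Pi^d$ is nonexpansive in $\Vert\cdot\Vert_\infty$ (it averages entries of $Q_i^*$ over the coordinates $z_{\nminusik}$ with weights from $d$) and $\td$ is a $\gamma$-contraction, the first difference has infinity norm at most $\gamma \Vert \Phi\hat{Q}_i^d - Q_i^*\Vert_\infty$. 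For the second, note that $(\Phi\Pi^d Q_i^* - Q_i^*)(z_{\nik},z_{\nminusik})$ is a convex combination of terms $Q_i^*(z_{\nik},z_{\nminusik}') - Q_i^*(z_{\nik},z_{\nminusik})$, each of which is bounded by $c\rhok$ by the exponential decay property (Definition~\ref{def:exp_decaying}). Combining,
$$\Vert \Phi\hat{Q}_i^d - Q_i^*\Vert_\infty \leq \gamma \Vert \Phi\hat{Q}_i^d - Q_i^*\Vert_\infty + c\rhok,$$
which rearranges to the claimed bound $\frac{c\rhok}{1-\gamma}$. The main subtlety of the argument lies in this last step: recognizing that $\Phi\Pi^d$ acts as a weighted averaging operator on the $z_{\nminusik}$-coordinates, so that its residual $(\Phi\Pi^d - I)Q_i^*$ is controlled directly by the exponential decay assumption rather than by a spectral argument.
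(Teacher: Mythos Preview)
Your proposal is correct and follows essentially the same approach as the paper's own proof: the algebraic identity via \eqref{appendix:critic:eq:bar_a}--\eqref{appendix:critic:eq:bar_b}, the contraction of $g^d$ from the nonexpansiveness of $\Pi^d$ and $\Phi$ combined with the $\gamma$-contraction of $\td$, and the fixed-point error bound obtained by inserting $\Phi\Pi^d Q_i^*$ and invoking the exponential decay property to control $\Vert\Phi\Pi^d Q_i^* - Q_i^*\Vert_\infty$. The only cosmetic difference is that you note $\Vert\Phi v\Vert_\infty = \Vert v\Vert_\infty$ (an isometry) where the paper only uses nonexpansiveness, but this changes nothing in the argument.
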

\begin{proof}[Proof of Lemma~\ref{appendix:critic:lem:ab}]
It is easy to check that $D = \Phi^\top\diag(d)\Phi \in \mathbb{R}^{\mathcal{Z}_{\nik}\times \mathcal{Z}_{\nik}}$ is a diagonal matrix, and the $z_{\nik}$'th diagonal entry is the marginal probability of $z_{\nik}$ under $d$, which is non-zero by the assumption of the lemma. Therefore, $\Phi^\top\diag(d)\Phi \in \mathbb{R}^{\mathcal{Z}_{\nik}\times \mathcal{Z}_{\nik}}$ is invertable and matrix $\Pi^d = (\Phi^\top\diag(d)\Phi)^{-1}\Phi^\top\diag(d)$ is well defined. Further, the $z_{\nik}$'th row of $\Pi^d$ is in fact the conditional distribution of the full state $z$ given $z_{\nik}$. So, $\Pi^d$ must be a stochastic matrix and is non-expansive in infinity norm.

By the definition of $\bar{A}^d$ and $\bar{b}^d$, we have,
    \begin{align*}
        \bar{A}^d \hat{Q}_i + \bar{b}^d &= \Phi^\top \diag(d) \big[\gamma P\Phi - \Phi \big] \hat{Q}_i+ \Phi^\top\diag(d) r_i \\
        &=\Phi^\top \diag(d) [r_i+ \gamma P\Phi \hat{Q}_i] - \Phi^\top\diag(d)\Phi\hat{Q}_i\\
        &= \Phi^\top \diag(d) \td(\Phi\hat{Q}_i) -  \Phi^\top\diag(d)\Phi\hat{Q}_i\\
&= -D\hat{Q}_i + D \Pi^d\td(\Phi\hat{Q}_i)\\
&=  -D\hat{Q}_i +D  g^d(\hat{Q}_i),
        \end{align*}
where $\td$ is the Bellman operator for reward $r_i$ defined in \eqref{appendix:critic:eq:bellman}, and operator $g^d$ is given by $g^d( \hat{Q}_i)=\Pi^d \td\Phi \hat{Q}_i$.

Notice that $\Phi$ is non-expansive in $\Vert\cdot\Vert_\infty$ norm since each row of $\Phi$ has precisely one entry being $1$ and all others are zero. Also since $\Pi^d$ is non-expansive in $\Vert\cdot\Vert_\infty$ norm and $\td$ is a $\gamma$-contraction in $\Vert\cdot\Vert_\infty$ norm, we have $g^d = \Pi^d \td \Phi$ is a $\gamma$ contraction in $\Vert\cdot\Vert_\infty$ norm. As a result, $g^d$ has a unique fixed point $\hat{Q}_i^d$.

Finally, we show \eqref{appendix:critic:eq:fixed_point_error}, which bounds the distance between $\Phi\hat{Q}_i^d$ and $Q_i^*$, where $Q_i^*$ is the true $Q$-function for reward $r_i$ and it is the unique fixed point of $\td$ operator \eqref{appendix:critic:eq:bellman}. We have,
\begin{align*}
\Vert \Phi \hat{Q}_i^{d} - Q_i^*\Vert_\infty&\leq \Vert \Phi \hat{Q}_i^{d} - \Phi \Pi^d Q_i^*\Vert_\infty + \Vert \Phi \Pi^d Q_i^* - Q_i^*\Vert_\infty\\
&=\Vert\Phi \Pi^d\td(\Phi \hat{Q}_i^d) - \Phi\Pi^d\td(Q_i^*)\Vert_\infty + \Vert \Phi \Pi^d Q_i^* - Q_i^*\Vert_\infty\\
&\leq \gamma \Vert \Phi \hat{Q}_i^d - Q_i^*\Vert_\infty + \Vert \Phi \Pi^d Q_i^* - Q_i^*\Vert_\infty,
\end{align*}
where the equality follows from the fact that $\hat{Q}_i^d$ is the fixed point of $\Pi^d \td\Phi$, $Q_i^*$ is the fixed point of $\td$; the last inequality is due to $\Phi\Pi^d \td$ is a $\gamma$ contration in infinity norm. Therefore,
\begin{align}
    \Vert \Phi \hat{Q}_i^{d} - Q_i^*\Vert_\infty \leq \frac{1}{1-\gamma} \Vert \Phi \Pi^d Q_i^* - Q_i^*\Vert_\infty. \label{eq:lem_d:fixed_point_error_1}
\end{align}
Next, recall that the $z_{\nik}$'s row of $\Pi^d$ is the distribution of the state-action pair $z$ conditioned on its $\nik$ coordinates being fixed to be $z_{\nik}$. We denote this conditional distribution of the states outside of $\nik$, $z_{\nminusik}$, given $z_{\nik}$, as $d(z_{\nminusik}|z_{\nik})$. With this notation,
$$(\Pi^d Q^*_i)(z_{\nik}) = \sum_{z_{\nminusik}}d(z_{\nminusik}|z_{N_{i}^k} ) Q^*_i(z_{\nik},z_{\nminusik}) .$$
And therefore, 
\begin{align*}
    (\Phi\Pi^d Q_i^*)(z_{\nik}, z_{\nminusik}) =\sum_{z_{\nminusik}'}d(z_{\nminusik}'|z_{\nik} ) Q_i^*(z_{\nik},z_{\nminusik}').
\end{align*}
Further, we have
\begin{align*}
  & |  (\Phi\Pi^d Q_i^*)(z_{\nik}, z_{\nminusik}) - Q_i^* (z_{\nik}, z_{\nminusik})|\\
  &= \bigg|  \sum_{z_{\nminusik}'}d(z_{\nminusik}'|z_{\nik} ) Q_i^*(z_{\nik},z_{\nminusik}') - \sum_{z_{\nminusik}'}d(z_{\nminusik}'|z_{\nik} ) Q_i^*(z_{\nik},z_{\nminusik}) \bigg|\\
  &\leq
 \sum_{z_{\nminusik}'}d(z_{\nminusik}'|z_{\nik} ) \big|Q_i^*(z_{\nik},z_{\nminusik}') -Q_i^*(z_{\nik},z_{\nminusik})\big| \\
  &\leq c \rhok,
\end{align*}
where the last inequality is due to the exponential decay property (cf. Definition~\ref{def:exp_decaying} and Assumption~\ref{assump:exp_decaying}). Therefore,
$$\Vert \Phi \Pi^d Q_i^* - Q_i^*\Vert_\infty\leq  c \rhok.$$
Combining the above with \eqref{eq:lem_d:fixed_point_error_1}, we get the desired result
$$  \Vert \Phi \hat{Q}_i^{d} - Q_i^*\Vert_\infty \leq \frac{ c \rhok}{1-\gamma}.  $$ \end{proof}

\subsection{Proof of Lemma~\ref{appendix:critic:lem:error_recursive}}\label{appendix:critic:subsec:error_recursive}
    Recall that the $z_{\nik}$'th diagonal entry of $B_{k,t}$ is $b_{k,t}(z_{\nik})$, and we define similarly the $z_{\nik}$'th diagonal entry of $\tilde{B}_{k,t}$ to be $\tilde{b}_{k,t}(z_{\nik})$. Using these notations, equation \eqref{eq:hatq_recursive} can be written as,
    \begin{align}
        \hat{Q}_i^t(z_{\nik}) &= \overbrace{ \tilde{b}_{\tau-1,t}(z_{\nik})\hat{Q}_i^\tau(z_{\nik}) + \sum_{k=\tau}^{t-1}  b_{k,t}(z_{\nik})   [g_k (\hat{Q}_i^k)](z_{\nik}) }^{:= G(z_{\nik})} \nonumber \\
        &\qquad + 
 \sum_{k=\tau}^{t-1}  \alpha_k \tilde{b}_{k,t}(z_{\nik}) (\epsilon_k(z_{\nik}) + \phi_k(z_{\nik})). \label{eq:contraction_hatq_recursive_componentwise}
    \end{align}
    Notice that by definition, $\tilde{b}_{\tau-1,t}(z_{\nik}) +\sum_{k=\tau}^{t-1}  b_{k,t}(z_{\nik}) =1$. Then,
\begin{align}
    |G(z_{\nik}) - Q_i^*(z)|&\leq \tilde{b}_{\tau-1,t}(z_{\nik})|\hat{Q}_i^\tau(z_{\nik}) - Q_i^*(z)| + \sum_{k=\tau}^{t-1}  b_{k,t}(z_{\nik})  | [g_k (\hat{Q}_i^k)](z_{\nik})-Q_i^*(z)|  \nonumber \\
    &\leq \tilde{b}_{\tau-1,t}(z_{\nik})|\hat{Q}_i^\tau(z_{\nik}) - Q_i^*(z)| + \sum_{k=\tau}^{t-1}  b_{k,t}(z_{\nik})  | [g_k (\hat{Q}_i^k)](z_{\nik})- \hat{Q}_i^{*,k}(z_{\nik}) | \nonumber\\
    &\quad + \sum_{k=\tau}^{t-1}  b_{k,t}(z_{\nik})  | {Q}_i^*(z)- \hat{Q}_i^{*,k}(z_{\nik}) |\nonumber\\
    &\leq  \tilde{b}_{\tau-1,t}(z_{\nik})|\hat{Q}_i^\tau(z_{\nik}) - Q_i^*(z)| + \gamma \sum_{k=\tau}^{t-1}  b_{k,t}(z_{\nik})  \Vert \hat{Q}_i^k- \hat{Q}_i^{*,k} \Vert_\infty \nonumber\\
    &\quad + \sum_{k=\tau}^{t-1}  b_{k,t}(z_{\nik})  \| {Q}_i^*-  \Phi \hat{Q}_i^{*,k} \|_\infty \nonumber\\
    &\leq  \tilde{b}_{\tau-1,t}(z_{\nik})|\hat{Q}_i^\tau(z_{\nik}) - Q_i^*(z)| + \gamma \sum_{k=\tau}^{t-1}  b_{k,t}(z_{\nik})  \Vert \Phi  \hat{Q}_i^k- {Q}_i^* \Vert_\infty \nonumber\\
    &\quad + 2 \sum_{k=\tau}^{t-1}  b_{k,t}(z_{\nik})  \| {Q}_i^*-  \Phi \hat{Q}_i^{*,k} \|_\infty\nonumber\\
    &\leq \tilde{\beta}_{\tau-1,t} \xi_\tau + \gamma \sum_{k=\tau}^{t-1}  b_{k,t}(z_{\nik})  \xi_k +    \frac{ 2 c \rhok}{1-\gamma} ,
\end{align}
where in the thrid inequality, we have used that $g_k$ is $\gamma$-contraction in infinity norm with fixed point $\hat{Q}_i^{*,k}$, and in the last inequality, we have used the property of $\hat{Q}_i^{*,k}$ in Lemma~\ref{lem:critic:barab}.
Combining the above with \eqref{eq:contraction_hatq_recursive_componentwise},  we have
\begin{align*}
 & \xi_t=  \Vert \Phi \hat{Q}_i^t - Q_i^*\Vert_\infty \\
  &\leq \tilde{\beta}_{\tau-1,t}\xi_\tau  + \gamma \sup_{z_{\nik}}\sum_{k=\tau}^{t-1}  b_{k,t}(z_{\nik})  \xi_k +  \frac{ 2 c \rhok}{1-\gamma}  +  \Vert \sum_{k=\tau}^{t-1}  \alpha_k \tilde{B}_{k,t} \epsilon_k\Vert_\infty  + \Vert \sum_{k=\tau}^{t-1}  \alpha_k \tilde{B}_{k,t}  \phi_k \Vert_\infty.
\end{align*}
\qed
\subsection{Proof of Lemma \ref{appendix:critic:lem:bounded} and Lemma \ref{appendix:critic:lem:stepsize}} \label{appendix:critic:subsec:auxilliary}
In this section, we provide proofs of the two auxiliary lemmas, Lemma \ref{appendix:critic:lem:bounded} and Lemma \ref{appendix:critic:lem:stepsize}. We start with the proof of Lemma~\ref{appendix:critic:lem:bounded}. 

\begin{proof}[Proof of Lemma~\ref{appendix:critic:lem:bounded}]
        First, notice that $A_{z,z'} =\mathbf{e}_{z_{\nik}} [ \gamma   \mathbf{e}_{z_{\nik}'}^T - \mathbf{e}_{z_{\nik}}^T ]$ and $b_{z} = \mathbf{e}_{z_{\nik}} r_i(z_i)$. As such, $\Vert A_{z,z'}\Vert_\infty \leq 1 + \gamma<2$, $\Vert b_{z}\Vert_\infty \leq \bar{r}$.
        
        \bigskip
        Part (a) can be proved by induction. Part (a) is true for $t=0$ as $\hat{Q}_i^0 = 0$. Assume $\Vert \hat{Q}_i^{t-1}\Vert_\infty\leq \frac{\bar{r}}{1-\gamma}$. Recall the update equation \eqref{eq:q_update_vec_0},
        \begin{align}
\hat{Q}_i^{t} = \hat{Q}_i^{t-1} + \alpha_{t-1}[r_i(z_i(t-1))+ \gamma\hat{Q}_i^{t-1}(z_{\nik}(t)) - \hat{Q}_i^{t-1}(z_{\nik}(t-1)) ]\mathbf{e}_{z_{\nik}(t-1)}, \nonumber
\end{align}
or in other words, 
\begin{align*}
    \hat{Q}_i^{t}(z_{\nik}(t-1)) &= \hat{Q}_i^{t-1}(z_{\nik}(t-1))   + \alpha_{t-1}[r_i(z_i(t-1))+ \gamma\hat{Q}_i^{t-1}(z_{\nik}(t)) - \hat{Q}_i^{t-1}(z_{\nik}(t-1)) ]\\
    &= (1-\alpha_{t-1}) \hat{Q}_i^{t-1}(z_{\nik}(t-1))  + \alpha_{t-1}[r_i(z_i(t-1))+ \gamma\hat{Q}_i^{t-1}(z_{\nik}(t)) ].
\end{align*}
And for other entries of $\hat{Q}_i^{t}$, it stays the same as $\hat{Q}_i^{t-1}$. For this reason, 
\begin{align*}
    \Vert \hat{Q}_i^t\Vert_\infty & \leq \max(\Vert \hat{Q}_i^{t-1}\Vert_\infty, | \hat{Q}_i^{t}(z_{\nik}(t-1))|) .
\end{align*}
Notice that
\begin{align*}
    &| \hat{Q}_i^{t}(z_{\nik}(t-1))|\leq (1-\alpha_{t-1}) \frac{\bar{r}}{1-\gamma} + \alpha_{t-1}(\bar{r} + \gamma \frac{\bar{r}}{1-\gamma}) = \frac{\bar{r}}{1-\gamma},
\end{align*}
which finishes the induction and the proof of part (a).  

\bigskip
For part (b), notice that $\epsilon_t=(A_t - \bar{A}_t)\hat{Q}_i^{t+1-\tau} + b_{t} - \bar b_t$. Therefore, it is easy to check that by part (a), $\Vert \epsilon_t\Vert_\infty \leq  4 \frac{\bar{r}}{1-\gamma} + 2\bar{r}= \bar{\epsilon}$.

\bigskip
For part (c), notice that, for any $k$
\begin{align*}
    \Vert \hat{Q}_i^{k} - \hat{Q}_i^{k-1}\Vert_\infty = \alpha_{k-1} \Vert A_{k-1}\hat{Q}_i^{k-1} + b_{k-1} \Vert_\infty\leq \alpha_{k-1} [2\frac{\bar{r}}{1-\gamma} + \bar{r}].
\end{align*}
Therefore, by triangle inequality,
\begin{align*}
    \Vert \hat{Q}_i^{t-1} - \hat{Q}_i^{t-\tau}\Vert_\infty \leq [2\frac{\bar{r}}{1-\gamma} + \bar{r}] \sum_{k=t-\tau}^{t-2} \alpha_k.
\end{align*}
As a consequence, $$\Vert \phi_{t}\Vert_\infty \leq \Vert A_t - \bar{A}_t\Vert_\infty \Vert \hat{Q}_i^t - \hat{Q}_i^{t-\tau+1}\Vert_\infty \leq [8\frac{\bar{r}}{1-\gamma} +4 \bar{r}] \sum_{k=t-\tau+1}^{t-1} \alpha_k = 2 \bar{\epsilon} \sum_{k=t-\tau+1}^{t-1} \alpha_k .$$
\end{proof}


\begin{proof}[Proof of Lemma~\ref{appendix:critic:lem:stepsize}]      Notice that $\log(1-x)\leq -x$ for all $x<1$. Then, 
        $$ (1 - \sigma\alpha_t) = e^{\log( 1 - \frac{\sigma h}{t+t_0})} \leq e^{- \frac{\sigma h}{t+t_0}}. $$
    Therefore,
    \begin{align*}
        \prod_{\ell=k+1}^{t-1}(1-\sigma \alpha_\ell)& \leq e^{-\sum_{\ell=k+1}^{t-1}  \frac{\sigma h}{\ell+t_0}}\\
        &\leq e^{-\int_{\ell=k+1}^{t}  \frac{\sigma h}{\ell+t_0} d\ell} \\
        &= e^{-\sigma h \log(\frac{t+t_0}{k+1+t_0})}\\
        &= \Big( \frac{k+1+t_0}{t+t_0}\Big)^{\sigma h},
    \end{align*}
    which leads to the bound on $\beta_{k,t}$ and $\tilde{\beta}_{k,t}$.
    
    For part (b), 
    $$\beta_{k,t}^2 \leq  \frac{h^2}{(t+t_0)^{2\sigma h}} \frac{(k+1+t_0)^{2\sigma h}}{(k+t_0)^2} \leq \frac{2 h^2}{(t+t_0)^{2\sigma h}} (k+t_0)^{2\sigma h- 2} ,$$
    where we have used $(k+1+t_0)^{2\sigma h} \leq  2 (k+t_0)^{2\sigma h}$, which is true when $t_0 \geq 4\sigma h$. Then,
    \begin{align*}
        \sum_{k=1}^{t-1}\beta_{k,t}^2&\leq \frac{2 h^2}{(t+t_0)^{2\sigma h}}  \sum_{k=1}^{t-1} (k+t_0)^{2\sigma h- 2}\leq  \frac{2 h^2}{(t+t_0)^{2\sigma h}} \int_{1}^{t} (y+t_0)^{2\sigma h- 2} dy\\
    &< \frac{2 h^2}{(t+t_0)^{2\sigma h}} \frac{1}{2\sigma h - 1} (t+t_0)^{2\sigma h - 1} < \frac{2h}{\sigma } \frac{1}{(t+t_0)},
    \end{align*}
    where in the last inequality we have used $2\sigma h-1 > \sigma h$. 
    
    For part (c), notice that for $k-\tau + 1\leq \ell\leq k-1 $ where $k\geq \tau$, we have $\alpha_\ell \leq \frac{h}{k-\tau+ t_0} \leq \frac{2h}{k+t_0}$ (using $t_0\geq \tau$). Then,
    \begin{align*}
        \sum_{k=\tau}^{t-1}\beta_{k,t}\sum_{\ell = k-\tau+1}^{k-1} \alpha_\ell &\leq  \sum_{k=\tau}^{t-1}\beta_{k,t} \frac{2h\tau}{k+t_0}\leq \sum_{k=\tau}^{t-1} \frac{h}{k+t_0} \Big( \frac{k+1+t_0}{t+t_0}\Big)^{\sigma h} \frac{2h\tau}{k+t_0}\\
        &\leq \sum_{k=\tau}^{t-1} \frac{4h^2\tau}{(t+t_0)^{\sigma h}} (k+t_0)^{\sigma h - 2}\\
        &\leq \frac{4h^2\tau}{(t+t_0)^{\sigma h}} \frac{(t+t_0)^{\sigma h -1}}{\sigma h -1}\\
        &\leq \frac{8h\tau}{\sigma} \frac{1}{t+t_0},
    \end{align*}
    where we have used $(k+1+t_0)^{\sigma h}\leq 2(k+t_0)^{\sigma h}$, and $\sigma h-1 > \frac{1}{2} \sigma h$. 
\end{proof}

\subsection{Proof of Lemma~\ref{appendix:critic:lem:martingale_bound} and Lemma~\ref{appendix:critic:lem:phi_bound}} \label{appendix:critic:subsec:epsilon_phi}

Since $\Vert \sum_{k=\tau}^{t-1}  \alpha_k \tilde{B}_{k,t}  \phi_k \Vert_\infty\leq  \sum_{k=\tau}^{t-1}\beta_{k,t} \Vert \phi_k\Vert_\infty$, Lemma~\ref{appendix:critic:lem:bounded} (c) and Lemma~\ref{appendix:critic:lem:stepsize} (c) directly leads to the bound in Lemma~\ref{appendix:critic:lem:phi_bound}. So, in this section, we focus on the proof of Lemma~\ref{appendix:critic:lem:martingale_bound}. We start by stating a variant of the Azuma-Hoeffding bound that handles our ``shifted'' Martingale difference sequence.

\begin{lemma}\label{appendix:critic:lem:azuma}
	Let $X_t$ be a $\mathcal{F}_t$-adapted stochastic process, satisfying
	$ \E X_t | \mathcal{F}_{t-\tau}=0$. Further, $|X_t|\leq \bar X_t$ almost surely. Then with probability $1-\delta$, we have,
	$$ |\sum_{k=0}^t X_{t}|  \leq  \sqrt{ 2\tau  \sum_{k=0}^t \bar{X}_{k}^2 \log(\frac{2\tau}{\delta}) }. $$
\end{lemma}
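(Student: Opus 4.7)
\textbf{Proof proposal for Lemma~\ref{appendix:critic:lem:azuma}.} My plan is to reduce the ``shifted'' martingale difference condition $\mathbb{E}[X_t \mid \mathcal{F}_{t-\tau}] = 0$ to the standard one-step martingale difference setting via a partition into $\tau$ sub-sequences. Specifically, for each residue class $r \in \{0, 1, \ldots, \tau-1\}$, I would consider the sub-sequence $Y_k^{(r)} := X_{k\tau + r}$, indexed by $k \geq 0$ (restricted to indices with $k\tau + r \leq t$). Let $\mathcal{G}_k^{(r)} := \mathcal{F}_{k\tau + r}$. Then $Y_k^{(r)}$ is $\mathcal{G}_k^{(r)}$-measurable, and the hypothesis $\mathbb{E}[X_{k\tau+r} \mid \mathcal{F}_{k\tau+r-\tau}] = 0$ becomes $\mathbb{E}[Y_k^{(r)} \mid \mathcal{G}_{k-1}^{(r)}] = 0$, which is precisely the one-step martingale difference condition.

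Next, I would apply the classical Azuma--Hoeffding inequality to each sub-sequence $\{Y_k^{(r)}\}$, using the almost-sure bound $|Y_k^{(r)}| \leq \bar{X}_{k\tau+r}$. This gives, for each $r$ and any $\lambda_r > 0$,
\[
\Pr\Bigl( \bigl| \sum_{k : k\tau + r \leq t} Y_k^{(r)} \bigr| \geq \lambda_r \Bigr) \leq 2\exp\Bigl( -\frac{\lambda_r^2}{2 \sum_{k:k\tau+r\leq t} \bar{X}_{k\tau+r}^2}\Bigr).
\]
Choosing $\lambda_r^2 = 2 \bigl(\sum_{k:k\tau+r\leq t} \bar{X}_{k\tau+r}^2\bigr) \log(2\tau/\delta)$ and taking a union bound over the $\tau$ residue classes yields, with probability at least $1 - \delta$, the simultaneous bound $\bigl|\sum_k Y_k^{(r)}\bigr| \leq \lambda_r$ for every $r$.

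Finally, I would reassemble the full sum via $\sum_{k=0}^{t} X_k = \sum_{r=0}^{\tau-1} \sum_k Y_k^{(r)}$ and apply Cauchy--Schwarz:
\[
\Bigl| \sum_{k=0}^{t} X_k \Bigr| \leq \sum_{r=0}^{\tau-1} \lambda_r \leq \sqrt{\tau \sum_{r=0}^{\tau-1} \lambda_r^2 } = \sqrt{ 2\tau \log(2\tau/\delta) \sum_{k=0}^{t} \bar{X}_k^2 },
\]
where the last equality uses that the partition $\{k\tau + r : 0 \leq r < \tau,\; k\tau+r \leq t\}$ covers $\{0, 1, \ldots, t\}$ exactly once, so that $\sum_{r} \sum_{k : k\tau+r\leq t} \bar{X}_{k\tau+r}^2 = \sum_{k=0}^{t} \bar{X}_k^2$. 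This matches the claimed bound.

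The only subtle step is recognizing that the $\tau$-shifted conditional expectation condition gives $\tau$ independent (in the filtration sense) true martingale difference sequences when indexed along arithmetic progressions with common difference $\tau$; everything else is a textbook application of Azuma--Hoeffding plus Cauchy--Schwarz. I do not expect any real obstacle beyond bookkeeping the partition and being careful that the filtration restricted to each sub-sequence is still a valid (nested) filtration, which is immediate since $\mathcal{F}_{(k-1)\tau+r} \subset \mathcal{F}_{k\tau+r}$.
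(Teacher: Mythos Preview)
Your proposal is correct and follows essentially the same approach as the paper: partition $\{X_k\}$ into $\tau$ sub-sequences along residue classes modulo $\tau$, observe that each sub-sequence is a genuine martingale difference sequence with respect to the restricted filtration, apply Azuma--Hoeffding to each, take a union bound over the $\tau$ classes, and finish with Cauchy--Schwarz to combine the $\tau$ partial sums. The paper's proof is identical up to notation (it uses $\ell$ for your $r$ and $\tilde{\mathcal{F}}_k^\ell$ for your $\mathcal{G}_k^{(r)}$).
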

\begin{proof}[Proof of Lemma~\ref{appendix:critic:lem:azuma}]       	Let $\ell$ be an integer between $0$ and $\tau-1$. For each $\ell$, define process $Y^\ell_k = X_{\tau k + \ell}$, scalar $\bar{Y}^\ell_k = \bar{X}_{k\tau+\ell}$, and define Filtration $\tilde{\mathcal{F}}_k^\ell = \mathcal{F}_{\tau k+\ell}$. Then, $Y_k^\ell$ is $\tilde{\mathcal{F}}_k^\ell$-adapted, and satisfies
       	$$ \E Y_k^\ell | \tilde{\mathcal{F}}_{k-1}^\ell = \E X_{k\tau + \ell}| \mathcal{F}_{k\tau+\ell - \tau} = 0.$$
       	Therefore, applying Azuma-Hoeffding bound on $Y_{k}^\ell$, we have
       	$$P( |\sum_{k: k\tau + \ell\leq t} Y_{k}^\ell| \geq t ) \leq 2  \exp(-\frac{t^2}{2 \sum_{k:k\tau+\ell\leq t} (\bar{Y}^\ell_{k})^2}),$$
       	i.e. with probability at least $1 - \frac{\delta}{\tau}$,
       	$$\Big|\sum_{k: k\tau + \ell\leq t} X_{k\tau+\ell}\Big|=  \Big|\sum_{k: k\tau + \ell\leq t} Y_{k}^\ell \Big| \leq \sqrt{ 2  \sum_{k:k\tau+\ell\leq t} \bar{X}_{k\tau+\ell}^2 \log(\frac{2\tau}{\delta}) }.$$
       	Using the union bound for $\ell = 0,\ldots,\tau-1$, we get that with probability at least $1-\delta$,
       	$$\Big|\sum_{k=0}^t X_{t}\Big| \leq \sum_{\ell=0}^{\tau-1}\Big|\sum_{k: k\tau + \ell\leq t} X_{k\tau+\ell}\Big| \leq \sum_{\ell=0}^{\tau -1} \sqrt{ 2  \sum_{k:k\tau+\ell\leq t} \bar{X}_{k\tau+\ell}^2 \log(\frac{2\tau}{\delta}) }\leq  \sqrt{ 2\tau  \sum_{k=0}^t \bar{X}_{k}^2 \log(\frac{2\tau}{\delta}) },$$
    where the last inequality is due to Cauchy-Schwarz. 
\end{proof}



Recall that Lemma~\ref{appendix:critic:lem:martingale_bound} is an upper bound on $\Vert\sum_{k=\tau}^{t-1} \alpha_k \tilde{B}_{k,t} \epsilon_k \Vert $, where $\sum_{k=\tau}^{t-1} \alpha_k \tilde{B}_{k,t} \epsilon_k$ is a random vector in $\R^{\mathcal{Z}_{\nik}}$, with its $z_{\nik}$'th entry being 
\begin{align}
    \sum_{k=\tau}^{t-1} \alpha_{k} \epsilon_{k}(z_{\nik}) \prod_{\ell=k+1}^{t-1} (1- \alpha_\ell \bar{d}_\ell(z_{\nik})), \label{appendix:critic:eq:martingale_entrywise}\end{align}
with $\bar{d}_\ell(z_{\nik})\geq \sigma$ almost surely, cf. Lemma~\ref{lem:critic:barab}. Fixing $z_{\nik}$, as have been shown in \eqref{appendix:critic:eq:martingale}, $\epsilon_{k}(z_{\nik})$ is a $\mathcal{F}_{k+1}$ adapted stochastic process satisfying $\E \epsilon_{k}(z_{\nik}) | \mathcal{F}_{k+1-\tau}=0$.  However, $\prod_{\ell=k+1}^{t-1} (1- \alpha_\ell \bar{d}_\ell(z_{\nik}))$ is not $\mathcal{F}_{k+1-\tau}$-measurable, and as such we cannot directly apply the Azuma-Hoeffding bound in Lemma~\ref{appendix:critic:lem:azuma} to quantity \eqref{appendix:critic:eq:martingale_entrywise}. In what follows, we first show in Lemma~\ref{appendix:critic:lem:martingale_upperbound_beta} that almost surely, the absolute value of quantity \eqref{appendix:critic:eq:martingale_entrywise} can be upper bounded by the sup of another quantity, to which we can directly apply Lemma~\ref{appendix:critic:lem:azuma}. With the help of Lemma~ \ref{appendix:critic:lem:martingale_upperbound_beta}, we can use the Azuma-Hoeffding bound to control \eqref{appendix:critic:eq:martingale_entrywise} and prove Lemma~\ref{appendix:critic:lem:martingale_bound}.

\begin{lemma}\label{appendix:critic:lem:martingale_upperbound_beta}
    For each $z_{\nik}$, we have almost surely,
    $$\Big|\sum_{k=\tau}^{t-1} \alpha_{k} \epsilon_{k}(z_{\nik}) \prod_{\ell=k+1}^{t-1} (1- \alpha_\ell \bar{d}_\ell(z_{\nik})) \Big| \leq  \sup_{\tau\leq k_0\leq t-1}\bigg(\big|\sum_{k=k_0+1}^{t-1}  \epsilon_k(z_\nik)\beta_{k,t}\big| + 2 \bar{\epsilon} {\beta}_{k_0,t} \bigg).$$
\end{lemma}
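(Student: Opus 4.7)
The plan is to factor the random product into a monotone multiplier on top of the deterministic weight $\beta_{k,t}$ and then to apply a summation-by-parts argument. Specifically, for each fixed $z_\nik$ write
$$\alpha_k \prod_{\ell=k+1}^{t-1}\bigl(1-\alpha_\ell \bar{d}_\ell(z_\nik)\bigr) = r_k\, \beta_{k,t}, \qquad r_k := \prod_{\ell=k+1}^{t-1}\frac{1-\alpha_\ell \bar{d}_\ell(z_\nik)}{1-\alpha_\ell \sigma}.$$
Under the step size conditions ($t_0\geq 2h$) we have $\alpha_\ell\leq 1/2$, and since $\bar d_\ell(z_\nik)$ is a marginal probability with $\sigma\leq\bar d_\ell(z_\nik)\leq 1$ by Lemma~\ref{lem:critic:barab}, every factor in the product defining $r_k$ lies in $[0,1]$. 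Hence pathwise $r_k\in[0,1]$, and advancing $k\mapsto k+1$ drops a factor $(1-\alpha_k\bar d_k)/(1-\alpha_k\sigma)\leq 1$ from the product, so the sequence $(r_k)_{k=\tau-1}^{t-1}$ is non-decreasing with terminal value $r_{t-1}=1$.

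With this reparametrization, put $c_k=\beta_{k,t}\epsilon_k(z_\nik)$ and consider the tail sums $S_k:=\sum_{j=k}^{t-1}c_j$, so that $S_{k+1}=B_{k}$ where $B_{k_0}:=\sum_{k=k_0+1}^{t-1}\epsilon_k(z_\nik)\beta_{k,t}$ is precisely the quantity appearing in the statement, and $S_t=0$. A standard Abel summation (writing $c_k=S_k-S_{k+1}$) then yields
$$\sum_{k=\tau}^{t-1} r_k c_k \;=\; r_\tau S_\tau \;+\; \sum_{k=\tau+1}^{t-1}(r_k-r_{k-1})\,S_k.$$
Because $r_k\geq 0$, $(r_k)$ is non-decreasing, and $r_\tau+\sum_{k=\tau+1}^{t-1}(r_k-r_{k-1})=r_{t-1}=1$, the right-hand side is a convex combination of the $S_k$'s. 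Taking absolute values gives
$$\Bigl|\sum_{k=\tau}^{t-1}\alpha_k\epsilon_k(z_\nik)\!\!\prod_{\ell=k+1}^{t-1}\!(1-\alpha_\ell\bar d_\ell(z_\nik))\Bigr| \;\leq\; \max_{\tau\leq k\leq t-1}|S_k| \;=\; \max_{\tau-1\leq k_0\leq t-2}|B_{k_0}|.$$

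All that remains is to translate the sup over $k_0\in[\tau-1,t-2]$ into the sup over $k_0\in[\tau,t-1]$ written in the lemma. For $k_0\in[\tau,t-2]$ the term $|B_{k_0}|$ is trivially dominated by $|B_{k_0}|+2\bar\epsilon\,\beta_{k_0,t}$. The only case that needs work is the boundary $k_0=\tau-1$: here I expand $B_{\tau-1}=\epsilon_\tau\beta_{\tau,t}+B_\tau$ and use the uniform bound $|\epsilon_\tau|\leq\bar\epsilon$ from Lemma~\ref{appendix:critic:lem:bounded}(b) to get $|B_{\tau-1}|\leq |B_\tau|+\bar\epsilon\,\beta_{\tau,t}\leq |B_\tau|+2\bar\epsilon\,\beta_{\tau,t}$, which is absorbed into the $k_0=\tau$ entry of the sup. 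The main subtlety of the whole argument lies in verifying the pathwise monotonicity $r_{k-1}\leq r_k\leq 1$, without which Abel summation would produce a total-variation factor $\sum|r_k-r_{k-1}|$ rather than telescoping to $1$; the factor $2$ in the $2\bar\epsilon\beta_{k_0,t}$ slack is cosmetic and exists only to fold the boundary term $k_0=\tau-1$ cleanly into the $k_0=\tau$ case.
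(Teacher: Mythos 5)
Your proof is correct, and it takes a genuinely different route from the paper's. The paper works directly with the recursion $p_k = (1-\alpha_{k-1}\bar{d}_{k-1})p_{k-1} + \alpha_{k-1}\epsilon_{k-1}$: it defines $k_0$ as the \emph{last} index at which the drift term fails to dominate the noise term, shows by induction that after $k_0$ the iterates keep a fixed sign and are dominated in magnitude by the worst-case recursion $\tilde p_k$ in which $\bar d_k$ is replaced by $\sigma$, and then unrolls $\tilde p_t = \sum_{k=k_0+1}^{t-1}\epsilon_k(z_{\nik})\beta_{k,t} + \tilde p_{k_0+1}\tilde\beta_{k_0,t}$ with $|\tilde p_{k_0+1}|\le 2\alpha_{k_0}\bar\epsilon$ — this is where the $2\bar\epsilon\beta_{k_0,t}$ term arises organically, and the random $k_0$ is why the final bound is a supremum. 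You instead factor the random weight as $r_k\beta_{k,t}$ with $r_k=\prod_{\ell=k+1}^{t-1}\frac{1-\alpha_\ell\bar d_\ell(z_{\nik})}{1-\alpha_\ell\sigma}$, verify pathwise that $0\le r_\tau\le\cdots\le r_{t-1}=1$ (using $\sigma\le\bar d_\ell\le 1$ and $\alpha_\ell<1$), and apply Abel summation so that the left-hand side becomes a convex combination of the tail sums $S_k=B_{k-1}$; the $\bar\epsilon$ slack is then needed only to shift the boundary index $k_0=\tau-1$ into the range of the stated supremum. Your argument is purely deterministic and algebraic, avoids the sign-preservation induction entirely, and in fact shows the stated bound holds with the $2\bar\epsilon\beta_{k_0,t}$ term being pure slack except at $k_0=\tau$; what the paper's comparison argument buys in exchange is that it never needs the upper bound $\bar d_\ell(z_{\nik})\le 1$ (only the lower bound $\ge\sigma$), though that upper bound is immediate here since $\bar d_\ell$ is a marginal probability. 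Both proofs are valid and yield the same lemma.
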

\begin{proof}[Proof of Lemma~\ref{appendix:critic:lem:martingale_upperbound_beta}]    Let $p_k$ be a scalar sequence defined as follows. Set $p_\tau=0$, and $$p_{k} = (1 - \alpha_{k-1} \bar{d}_{k-1}(z_{\nik}))p_{k-1} + \alpha_{k-1} \epsilon_{k-1}(z_{\nik}) .$$
    Then 
    $p_t = \sum_{k=\tau}^{t-1} \alpha_{k} \epsilon_{k}(z_{\nik}) \prod_{\ell=k+1}^{t-1} (1- \alpha_\ell \bar{d}_\ell(z_{\nik})) $, and to prove Lemma~\ref{appendix:critic:lem:martingale_upperbound_beta} we need to bound $|p_t|$. 
    Let $$k_0 = \sup \{k\leq t-1: (1 - \alpha_{k} \bar{d}_{k}(z_{\nik}))|p_{k}| \leq \alpha_{k} |\epsilon_{k}(z_{\nik})|\}.$$
     We must have $k_0\geq \tau$ since $|p_\tau| = 0$. 
    With $k_0$ defined, we now define another scalar sequence $\tilde{p}$ s.t. $\tilde{p}_{k_0+1} = p_{k_0+1}$ and 
    $$\tilde{p}_{k} = (1 - \alpha_{k-1}\sigma)\tilde{p}_{k-1} + \alpha_{k-1} \epsilon_{k-1}(z_{\nik}) .$$
    We claim that for all $k\geq k_0+1$, $p_k$ and $\tilde{p}_{k}$ have the same sign, and $|p_k| \leq |\tilde p_{k}|$. This is obviously true for $k=k_0+1$. Suppose it is true for for $k-1$. Without loss of generality, suppose both $p_{k-1}$ and $\tilde{p}_{k-1}$ are non-negative. Since $k-1>k_0$ and by the definition of $k_0$, we must have
    $$ (1 - \alpha_{k-1} \bar{d}_{k-1}(z_{\nik}))p_{k-1} > |\alpha_{k-1} \epsilon_{k-1}(z_{\nik})|. $$
    Therefore, $p_k>0$. Further, since $\bar{d}_{k-1}(z_{\nik})\geq \sigma$, we also have
    $$(1 - \alpha_{k-1} \sigma)\tilde{p}_{k-1}\geq (1 - \alpha_{k-1} \bar{d}_{k-1}(z_{\nik}))p_{k-1} > |\alpha_{k-1} \epsilon_{k-1}(z_{\nik})| . $$
    These imply $ \tilde{p}_k\geq p_k >0$. The case where both $p_{k-1}$ and $\tilde{p}_{k-1}$ are negative are similar. This finishes the induction, and as a result, $|p_t|\leq |\tilde{p}_t|$.
    
    Notice, 
    $$\tilde{p}_t = \sum_{k=k_0+1}^{t-1} \alpha_k \epsilon_k(z_{\nik})\prod_{\ell=k+1}^{t-1}(1-\alpha_\ell \sigma) + \tilde{p}_{k_0+1}\prod_{\ell=k_0+1}^{t-1}(1-\alpha_\ell \sigma)= \sum_{k=k_0+1}^{t-1}  \epsilon_k(z_{\nik})\beta_{k,t} + \tilde{p}_{k_0+1}\tilde{\beta}_{k_0,t}.$$
    By the definition of $k_0$, we have $$|p_{k_0+1}| \leq (1-\alpha_{k_0}\bar{d}_{k_0}(z_{\nik}))|p_{k_0}| + \alpha_{k_0} |\epsilon_{k_0}(z_{\nik})| \leq 2\alpha_{k_0} |\epsilon_{k_0}(z_{\nik})| \leq 2\alpha_{k_0} \bar{\epsilon},$$ 
 where in the last step, we have used the upper bound on $\Vert \epsilon_{k_0}\Vert_\infty$ in Lemma~\ref{appendix:critic:lem:bounded} (b). As a result, 
    \begin{align*}
        |p_t| & \leq |\tilde{p}_t| \leq \big|\sum_{k=k_0+1}^{t-1}  \epsilon_k(z_{\nik})\beta_{k,t}\big| + \big|\tilde{p}_{k_0+1}\tilde{\beta}_{k_0,t}\big|\\
        &\leq \big|\sum_{k=k_0+1}^{t-1}  \epsilon_k(z_{\nik})\beta_{k,t}\big| + \big|2 \alpha_{k_0}\bar{\epsilon} \tilde{\beta}_{k_0,t}\big|\\
        &= \big|\sum_{k=k_0+1}^{t-1}  \epsilon_k(z_{\nik})\beta_{k,t}\big| + 2 \bar{\epsilon} {\beta}_{k_0,t}.
    \end{align*}
\end{proof}
With the above preparations, we are now ready to prove Lemma~\ref{appendix:critic:lem:martingale_bound}.

\begin{proof}[Proof of Lemma~\ref{appendix:critic:lem:martingale_bound}]
Fix $z_{\nik}$ and $\tau\leq k_0\leq t-1$. As have been shown in \eqref{appendix:critic:eq:martingale}, $\epsilon_{k}(z_{\nik})\beta_{k,t}$ is a $\mathcal{F}_{k+1}$ adapted stochastic process satisfying $\E \epsilon_{k} (z_{\nik}) \beta_{k,t} | \mathcal{F}_{k+1-\tau}=0$.
Also by Lemma~\ref{appendix:critic:lem:bounded}(b), $|\epsilon_k(z_{\nik})\beta_{k,t}|\leq \bar{\epsilon}\beta_{k,t}$ almost surely. As a result, we can use the Azuma-Hoeffding bound in Lemma~\ref{appendix:critic:lem:azuma} to get with probability $1-\delta$,
 $$ \Big|\sum_{k=k_0+1}^{t-1}  \epsilon_k(z_{\nik})\beta_{k,t}\Big| \leq \bar{\epsilon} \sqrt{ 2\tau  \sum_{k=k_0+1}^{t-1} \beta_{k,t}^2 \log(\frac{2\tau}{\delta}) } .  $$
By a union bound on $\tau\leq k_0\leq t-1$, we get with probability $1-\delta$,
 $$ \sup_{\tau\leq k_0\leq t-1 }\big|\sum_{k=k_0+1}^{t-1}  \epsilon_k(z_{\nik})\beta_{k,t}\big| \leq \sup_{\tau\leq k_0\leq t-1 } \bar{\epsilon} \sqrt{ 2\tau  \sum_{k=k_0+1}^{t-1} \beta_{k,t}^2 \log(\frac{2\tau t}{\delta}) } \leq    \bar{\epsilon} \sqrt{ 2\tau  \sum_{k=\tau+1}^{t-1} \beta_{k,t}^2 \log(\frac{2\tau t}{\delta}) } .$$
Then, by Lemma~\ref{appendix:critic:lem:martingale_upperbound_beta}, we have with probability $1-\delta$, 
\begin{align*}
    \Big|\sum_{k=\tau}^{t-1} \alpha_{k} \epsilon_{k}(z_{\nik}) \prod_{\ell=k+1}^{t-1} (1- \alpha_\ell \bar{d}_\ell(z_{\nik}))\Big| &\leq  \sup_{\tau\leq k_0\leq t-1}\bigg(\big|\sum_{k=k_0+1}^{t-1}  \epsilon_k(z_{\nik})\beta_{k,t}\big| + 2 \bar{\epsilon} {\beta}_{k_0,t} \bigg) \\
    &\leq \bar{\epsilon} \sqrt{ 2\tau  \sum_{k=\tau+1}^{t-1} \beta_{k,t}^2 \log(\frac{2\tau t}{\delta}) } +  \sup_{\tau\leq k_0\leq t-1}2 \bar{\epsilon} {\beta}_{k_0,t} \\
    &\leq  2 \bar{\epsilon} \sqrt{  \frac{\tau h}{\sigma(t+t_0)}   \log(\frac{2\tau t}{\delta}) } + \sup_{\tau\leq k_0\leq t-1}2 \bar{\epsilon} \frac{h}{k_0+t_0} \Big( \frac{k_0+1+t_0}{t+t_0}\Big)^{\sigma h}\\
    &\leq  2 \bar{\epsilon} \sqrt{  \frac{\tau h}{\sigma(t+t_0)}   \log(\frac{2\tau t}{\delta}) } + 2 \bar{\epsilon} \frac{h}{t-1+t_0} \\
    &\leq 6 \bar{\epsilon} \sqrt{  \frac{\tau h}{\sigma(t+t_0)}   \log(\frac{2\tau t}{\delta}) }  ,
\end{align*}
where in the third inequality, we have used the bounds on $\beta_{k,t}$ in Lemma~\ref{appendix:critic:lem:stepsize}. Finally, apply the union bound over $z_{\nik}\in \mathcal{Z}_{\nik}$, and noticing that $|\nik|\leq \fk$ and $|\mathcal{Z}_{\nik}|\leq (SA)^{\fk}$ by Assumption~\ref{assump:bounded_reward}, we have with probability $1-\delta$,
$$\Big\Vert \sum_{k=\tau}^{t-1} \alpha_{k}\tilde{B}_{k,t} \epsilon_{k} \Big\Vert_\infty \leq 6 \bar{\epsilon} \sqrt{  \frac{\tau h}{\sigma(t+t_0)}   \log(\frac{2\tau t (SA)^{\fk}}{\delta}) } =  6 \bar{\epsilon} \sqrt{  \frac{\tau h}{\sigma(t+t_0)}  [ \log(\frac{2\tau t }{\delta}) + \fk\log SA]}  . $$
\end{proof}

\subsection{Proof of Lemma~\ref{lem:b_kt_bound}}\label{appendix:critic:subsec:b_kt_bound}

Throughout the proof, we fix $z_{\nik}$ and prove the desired upper bound. For notational simplicity, we drop the dependence on $z_{\nik}$ and write $b_{k,t}$ and $\bar{d}_k$ instead, and we will use the property $\bar{d}_k\geq \sigma$. Define the sequence 
$$e_t =\sum_{k=\tau}^{t-1}  b_{k,t} \frac{1}{{(k+t_0)^\omega}}. $$
We use induction to show that $e_t \leq \frac{1}{ \sqrt{\gamma}{(t+t_0)^\omega}}$. The statement is clearly true for $t=\tau+1$, as $e_{\tau+1} =  b_{\tau,\tau+1} \frac{1}{{(\tau+t_0)^\omega}}= \alpha_\tau \bar{d}_\tau \frac{1}{{(\tau+t_0)^\omega}} \leq  \frac{1}{\sqrt{\gamma}(\tau+1+t_0)^\omega}$ (last step needs $\alpha_\tau \leq \frac{1}{2}, (1+\frac{1}{t_0})^\omega\leq \frac{2}{\sqrt{\gamma}} $, implied by $t_0\geq 1$, $\omega\leq 1$). Let the statement be true for $t-1$. Then, notice that,
\begin{align*}
e_t &=\sum_{k=\tau}^{t-2}  b_{k,t} \frac{1}{(k+t_0)^\omega} + b_{t-1,t} \frac{1}{{(t-1+t_0)^\omega}} \\
&= (1-\alpha_{t-1}\bar{d}_{t-1}) \sum_{k=\tau}^{t-2}  b_{k,t-1} \frac{1}{(k+t_0)^\omega} + \alpha_{t-1} \bar{d}_{t-1} \frac{1}{{(t-1+t_0)^\omega}}\\
&= (1-\alpha_{t-1}\bar{d}_{t-1})  e_{t-1}+ \alpha_{t-1} \bar{d}_{t-1} \frac{1}{{(t-1+t_0)^\omega}}\\
&\leq (1-\alpha_{t-1}\bar{d}_{t-1})  \frac{1}{\sqrt{\gamma}{(t-1+t_0)}^\omega} + \alpha_{t-1}\bar{d}_{t-1}  \frac{1}{{(t-1+t_0)^\omega}}\\
&= \Big[ 1 - \alpha_{t-1} \bar{d}_{t-1} (1-\sqrt{\gamma})\Big]\frac{1}{\sqrt{\gamma}{(t-1+t_0)}^\omega} ,
\end{align*}
where the inequality is based on induction assumption. Then, plug in $\alpha_{t-1} = \frac{h}{t-1+t_0}$ and use $\bar{d}_{t-1}\geq \sigma$, we have,
\begin{align*}
e_t&\leq \Big[ 1 - \frac{\sigma h}{t-1+t_0} (1-\sqrt{\gamma})\Big]\frac{1}{\sqrt{\gamma}{(t-1+t_0)}^\omega} \\
&= \Big[ 1 - \frac{\sigma h}{t-1+t_0} (1-\sqrt{\gamma})\Big] \Big(\frac{t+t_0}{t-1+t_0}\Big)^\omega \frac{1}{\sqrt{\gamma}{(t+t_0)}^\omega}\\
&= \Big[ 1 - \frac{\sigma h}{t-1+t_0} (1-\sqrt{\gamma})\Big] \Big( 1+ \frac{1}{t-1+t_0}\Big)^\omega \frac{1}{\sqrt{\gamma}{(t+t_0)}^\omega}.
\end{align*}
Now using the inequality that for any $x>-1$, $(1+x)\leq e^x$, we have,
\begin{align*}
\Big[ 1 - \frac{\sigma h}{t-1+t_0} (1-\sqrt{\gamma})\Big] \Big( 1+ \frac{1}{t-1+t_0}\Big)^\omega \leq  e^{- \frac{\sigma h}{t-1+t_0} (1-\sqrt{\gamma}) + \omega\frac{1}{t-1+t_0} } \leq 1,
\end{align*}
where in the last inequality, we have used $\omega\leq 1$ and the condition on $h$ s.t. $\sigma h (1-\sqrt{\gamma}) \geq 1$. This shows $e_t \leq  \frac{1}{\sqrt{\gamma}{(t+t_0)}^\omega}$ and finishes the induction. \qed

\section{Proof of Auxiliary Results for Analysis of Actor} \label{sec:appendix_sec}

\subsection{Proof of Lemma~\ref{appendix:actor:lem:upperbound}}\label{appendix:actor:subsec:upperbound}

Recall that 
$$ \hat{g}_i(m) =  \sum_{t=0}^{T} \gamma^t  \frac{1}{n} \sum_{j\in \nik} \hat{Q}_j^{m,T} (s_{\njk}(t),a_{\njk}(t)) \nabla_{\theta_i} \log \zeta_i^{\theta_i(m)} (a_i(t)|s_i(t)). $$
Therefore,
\begin{align*}
    \Vert \hat{g}_i(m)\Vert &\leq  \sum_{t=0}^{T} \gamma^t  \frac{1}{n} \sum_{j\in \nik} |\hat{Q}_j^{m,T} (s_{\njk}(t),a_{\njk}(t))| \Vert \nabla_{\theta_i}  \log \zeta_i^{\theta_i(m)} (a_i(t)|s_i(t))\Vert\\
  &  \leq \sum_{t=0}^{T} \gamma^t \frac{\bar{r}}{1-\gamma} L_i< \frac{\bar{r}}{(1-\gamma)^2} L_i,
\end{align*}
where we have used that $\Vert \hat{Q}_j^{m,T}\Vert_\infty \leq \frac{\bar{r}}{1-\gamma}$ almost surely (cf. Lemma~\ref{appendix:critic:lem:bounded} (a)). As a result,
$$\Vert \hat{g}(m)\Vert = \sqrt{\sum_{i=1}^n \Vert \hat{g}_i(m)\Vert^2 } < \frac{\bar{r}}{(1-\gamma)^2}L.$$
The upper bounds for $\Vert g(m)\Vert$, $\Vert h(m)\Vert$ and $\Vert \nabla J(\theta(m))\Vert$ can be obtained in an almost identical way and their proof is therefore omitted.\qed
\subsection{Proof of Lemma~\ref{appendix:actor:lem:e1_bound}}\label{appendix:actor:subsec:e1_bound}
 Let $\mathcal{G}_m$ be the $\sigma$-algebra generated by the trajectories in the first $m$ outer-loop iterations. Then, Theorem~\ref{appendix:critic:thm} implies that, fixing each $m\leq M$ and $i\in \mathcal{N}$, conditioned on $\mathcal{G}_{m-1}$, the following event happens with probability at least $1-\delta$:
$$ \sup_{(s,a)\in\mathcal{S}\times \mathcal{A}} \big| Q_i^{\theta(m)}(s,a) - \hat{Q}_i^{m,T}(s_{\nik},a_{\nik}) \big| \leq \frac{C_a(\delta,T)}{\sqrt{T+t_0}} + \frac{C_a'}{T+t_0} + \frac{2c\rhok}{(1-\gamma)^2} ,$$
where $$C_a(\delta,T)=   \frac{6\bar{\epsilon}}{1-\sqrt{\gamma}}  \sqrt{  \frac{\tau h}{\sigma}  [ \log(\frac{2\tau T^2 }{\delta}) + \fk\log SA]} , C_a'= \frac{2}{1-\sqrt{\gamma}} \max( \frac{16\bar\epsilon  h\tau}{\sigma}, \frac{2 \bar{r}}{1-\gamma}{(\tau+t_0)}),$$
with $\bar{\epsilon} =4 \frac{\bar{r}}{1-\gamma} + 2\bar{r} $.

We can take expectation and average out $\mathcal{G}_{m-1}$, and apply union bound over $0\leq m\leq M-1$ and $i\in \mathcal{N}$, getting with probability at least $1-\frac{\delta}{2}$,
\begin{align}
  \sup_{m\leq M-1}\sup_{i\in\mathcal{N}}  \sup_{(s,a)\in\mathcal{S}\times \mathcal{A}} \big| Q_i^{\theta(m)}(s,a) - \hat{Q}_i^{m,T}(s_{\nik},a_{\nik}) \big| &\leq \frac{C_a(\frac{\delta}{2 nM},T)}{\sqrt{T+t_0}} + \frac{C_a'}{T+t_0} + \frac{2c\rhok}{(1-\gamma)^2} \nonumber \\
  &\leq \frac{4c\rhok}{(1-\gamma)^2}, \label{appendix:actor:eq:critic_error}
\end{align}
where in the last step, we have used that our lower bound on $T$ implies  $\frac{C_a(\frac{\delta}{2 nM},T)}{\sqrt{T+t_0}} + \frac{C_a'}{T+t_0}  \leq \frac{2c\rhok}{(1-\gamma)^2}  $. Therefore, conditioned on \eqref{appendix:actor:eq:critic_error} being true, we have for any $m\leq M-1$ and any $i\in\mathcal{N}$,
\begin{align*}
   & \Vert \hat{g}_i(m) - g_i(m)\Vert \\
    &\leq \big\Vert \sum_{t=0}^{T} \gamma^t \frac{1}{n} \sum_{j\in \nik}\big[ Q_j^{\theta(m)} (s(t),a(t)) - \hat{Q}_j^{m,T} (s_{\njk}(t),a_{\njk}(t))\big]\nabla_{\theta_i} \log \zeta_i^{\theta_i(m)} (a_i(t)|s_i(t))  \big \Vert\\
    &\leq \sum_{t=0}^{T} \gamma^t \frac{1}{n} \sum_{j\in \nik}\Big| Q_j^{\theta(m)} (s(t),a(t)) - \hat{Q}_j^{m,T} (s_{\njk}(t),a_{\njk}(t))\Big|  \big\Vert\nabla_{\theta_i} \log \zeta_i^{\theta_i(m)} (a_i(t)|s_i(t))  \big \Vert\\
    &\leq \sum_{t=0}^{T} \gamma^t \frac{4c\rhok}{(1-\gamma)^2}  L_i< \frac{4cL_i\rhok}{(1-\gamma)^3}.
\end{align*}
As a result, 
$$\sup_{0\leq m\leq M-1}\Vert \hat{g}(m) - g(m)\Vert \leq \frac{4c L\rhok}{(1-\gamma)^3}, $$
which is true conditioned on event \eqref{appendix:actor:eq:critic_error} is true that happens with probability at least $1-\frac{\delta}{2}$.
\qed

\subsection{Proof of Lemma~\ref{appendix:actor:lem:martingale_bound}}\label{appendix:actor:subsec:martingale_bound}
    By Lemma~\ref{appendix:actor:lem:upperbound}, we have almost surely,
    $$|  \eta_m \langle \nabla J(\theta(m)), e^2(m) \rangle  | \leq \eta_m \Vert \nabla J(\theta(m))\Vert \Vert h(m) - g(m)\Vert \leq \eta_m \frac{2\bar{r}^2 L^2}{(1-\gamma)^4}. $$
    As $\eta_m \langle \nabla J(\theta(m)), e^2(m) \rangle$ is a martingale difference sequence w.r.t. $\mathcal{G}_m$, we have by Azuma Hoeffding bound, with probability at least $1-\frac{1}{2}\delta$, 
    $$\Big| \sum_{m=0}^{M-1} \eta_m \langle \nabla J(\theta(m)), e^2(m) \rangle \Big| \leq \frac{2\bar{r}^2 L^2}{(1-\gamma)^4} \sqrt{2\sum_{m=0}^{M-1} \eta_m^2 \log\frac{4}{\delta} } . $$
\qed

\subsection{Proof of Lemma~\ref{appendix:actor:lem:error_h_grad}}\label{appendix:actor:subsec:error_h_grad}
By \eqref{appendix:actor:eq:grad_i}, we have
\begin{align*}
    \nabla_{\theta_i} J(\theta(m))& = \sum_{t=0}^\infty \E_{s\sim\pi_t^{\theta(m)}, a\sim \zeta^{\theta(m)}(\cdot|s)} \left[ \gamma^t Q^{\theta(m)}(s,a) \nabla_{\theta_i}\log\zeta^{\theta(m)}(a|s)\right]\\
    &= \sum_{t=0}^\infty \E_{s\sim\pi_t^{\theta(m)}, a\sim \zeta^{\theta(m)}(\cdot|s)} \left[ \gamma^t Q^{\theta(m)}(s,a) \nabla_{\theta_i}\log\zeta_i^{\theta_i(m)}(a_i|s_i)\right]
\end{align*}
where we have used $\nabla_{\theta_i}\log\zeta^{\theta(m)}(a|s) =\nabla_{\theta_i} \sum_{j\in\mathcal{N}} \log\zeta_j^{\theta_j(m)}(a_j|s_j)=\nabla_{\theta_i}\log\zeta_i^{\theta_i(m)}(a_i|s_i)$. Also recall the definition of $h_i(\theta)$ in \eqref{appendix:actor:eq:h_i}, 
$$ h_i(m) =  \sum_{t=0}^T \E_{s\sim\pi^{\theta(m)}_t,a \sim \zeta^{\theta(m)}(\cdot|s)}\left[ \gamma^t \frac{1}{n} \sum_{j\in {\nik}} {Q}_j^{\theta(m)} (s,a)\nabla_{\theta_i} \log\zeta_i^{\theta_i(m)} (a_i|s_i)\right].$$
The rest of the proof is essentially the same as Lemma~\ref{lem:truncated_pg}. For completeness we provide a proof below. Combining the above two equations, we have,
\begin{align*}
        &\nabla_{\theta_i} J(\theta(m)) - h_i(m) \\
        &= \sum_{t=0}^T \mathbb{E}_{s \sim\pi^{\theta(m)}_t, a\sim \zeta^{\theta(m)}(\cdot|s)}\left[ \gamma^t\nabla_{\theta_i}  \log \zeta_i^{\theta_i(m)}(a_i|s_i)  \left(  Q^{\theta(m)}(s,a)  - \frac{1}{n}\sum_{j\in \nik} Q_j^{\theta(m)}(s,a)\right)\right]\\
        &\quad + \sum_{t=T+1}^\infty  \mathbb{E}_{s \sim\pi^{\theta(m)}_t, a\sim \zeta^{\theta(m)}(\cdot|s)}\left[\gamma^t  \nabla_{\theta_i}  \log \zeta_i^{\theta_i(m)}(a_i|s_i)  Q^{\theta(m)}(s,a)\right] \\
        &:= E_1 + E_2.
        \end{align*}
   Clearly, the second term satisfies $\Vert E_2\Vert \leq \frac{ L_i \bar{r}}{(1-\gamma)^2}\gamma^{T+1} $. For $E_1$, we have
        \begin{align*}
        E_1&=\sum_{t=0}^T \mathbb{E}_{s \sim\pi^{\theta(m)}_t, a\sim \zeta^{\theta(m)}(\cdot|s)}\left[ \gamma^t\nabla_{\theta_i}  \log \zeta_i^{\theta_i(m)}(a_i|s_i) \left(\frac{1}{n} \sum_{j\in \nminusik} {Q}_j^{\theta(m)}(s,a) \right)\right]\\
        &= \sum_{t=0}^T \mathbb{E}_{s \sim\pi^{\theta(m)}_t, a\sim \zeta^{\theta(m)}(\cdot|s)}\left[ \gamma^t\nabla_{\theta_i}  \log \zeta_i^{\theta_i(m)}(a_i|s_i)   \frac{1}{n}\sum_{j\in \nminusik} \left( Q_j^{\theta(m)}(s,a)- \hat{Q}_j^{\theta(m)}(s_{\njk},a_{\njk}) \right)\right] \\
        &\qquad + \sum_{t=0}^T \mathbb{E}_{s \sim\pi^{\theta(m)}_t, a\sim \zeta^{\theta(m)}(\cdot|s)}\left[ \gamma^t\nabla_{\theta_i}  \log \zeta_i^{\theta_i(m)}(a_i|s_i) \frac{1}{n} \sum_{j\in \nminusik}  \hat{Q}_j^{\theta(m)}(s_{\njk},a_{\njk})\right]\\
        &:= E_3 +E_4,
    \end{align*}
 where $\hat{Q}_j^{\theta(m)}$ is any truncated $Q$ function for $Q_j^{\theta(m)}$ as defined in \eqref{eq:truncated_q}. We claim $E_4$ is zero. To see this, consider for any $j\in \nminusik$ and any $t$, 
    \begin{align*}
        &\mathbb{E}_{s\sim\pi^{\theta(m)}_t, a\sim \zeta^{\theta(m)}(\cdot|s)} \left[ \nabla_{\theta_i}  \log \zeta_i^{\theta_i(m)}(a_i|s_i)    \hat{Q}_j^{\theta(m)}(s_{\njk},a_{\njk})\right] \\
        &= \sum_{s,a} \pi^{\theta(m)}_t(s) \prod_{\ell=1}^n \zeta^{\theta_\ell(m)}_\ell(a_\ell|s_\ell) \frac{\nabla_{\theta_i}  \zeta_i^{\theta_i(m)}(a_i|s_i)}{\zeta_i^{\theta_i(m)}(a_i|s_i)} \hat{Q}_j^{\theta(m)}(s_{\njk},a_{\njk})\\
        &= \sum_{s,a} \pi^{\theta(m)}_t(s) \prod_{\ell\neq i} \zeta^{\theta_\ell(m)}_\ell(a_\ell|s_\ell) \nabla_{\theta_i}  \zeta_i^{\theta_i(m)}(a_i|s_i)  \hat{Q}_j^{\theta(m)}(s_{\njk},a_{\njk})\\
        &= \sum_{s,a_{1:i-1},a_{i+1:n}} \pi^{\theta(m)}_t(s) \prod_{\ell\neq i} \zeta^{\theta_\ell(m)}_\ell(a_\ell|s_\ell)  \hat{Q}_j^{\theta(m)}(s_{\njk},a_{\njk}) \sum_{a_i} \nabla_{\theta_i}   \zeta_i^{\theta_i(m)}(a_i|s_i) \\
        & = 0,
    \end{align*}
    where in the last equality, we have used $\hat{Q}_j^{\theta(m)}(s_{\njk},a_{\njk})$ does not depend on $a_i$ as $i\not\in \njk$; and $\sum_{a_i} \nabla_{\theta_i}   \zeta_i^{\theta_i(m)}(a_i|s_i) =  \nabla_{\theta_i}   \sum_{a_i} \zeta_i^{\theta_i(m)}(a_i|s_i) =\nabla_{\theta_i} 1 = 0 $. 
    
    For $E_3$, by the exponential decay property, the truncated $Q$ function has a small error, cf. \eqref{appendix:truncated:eq:q_err},  
    $$\sup_{s,a} | Q_j^{\theta(m)}(s,a)- \hat{Q}_j^{\theta(m)}(s_{\njk},a_{\njk})| \leq c\rhok,$$
    and as a result,
    $$\Vert E_3\Vert \leq \frac{1-\gamma^{T+1}}{1-\gamma} L_i c\rhok < \frac{L_i c }{(1-\gamma)}\rhok.$$
Therefore,
    
    \begin{align*}
       \Vert \nabla_{\theta_i} J(\theta(m)) - h_i(m)  \Vert& = \Vert E_2 + E_3\Vert \leq   \frac{ L_i \bar{r}}{(1-\gamma)^2}\gamma^{T+1}    +  \frac{L_i c }{(1-\gamma)}\rhok,\\
       &\leq 2 \frac{L_i c }{(1-\gamma)}\rhok,
    \end{align*}
where in the last step, we have used $$T+1\geq \frac{\log \frac{c(1-\gamma)}{\bar{r}}+ (\khop+1)\log\rho}{\log \gamma} ,$$
and as a result, $\Vert \nabla J(\theta(m)) - h(m)  \Vert\leq  2 \frac{L c }{(1-\gamma)}\rhok$.
\qed

 \section*{Acknowledgement}
 We would like to thank Yiheng Lin of Caltech and Prof. Longbo Huang of Tsinghua University for suggesting the wireless communication example. This work was supported by Resnick Sustainability Institute Fellowship, NSF CAREER 1553407, ONR YIP N00014-19-1-2217, AFOSR YIP FA9550-18-1-0150, the PIMCO Fellowship, NSF AitF-1637598, NSF CNS-1518941, Amazon AI4Science Fellowship, and Caltech Center for Autonomous Systems and Technologies (CAST). 


\bibliographystyle{abbrvnat} 
\bibliography{networkRLref} 



\end{document}